\newcommand{\bbE}{{\ensuremath{\mathbb E}} }
\newcommand{\bbF}{{\ensuremath{\mathbb F}} }
\newcommand{\bbP}{{\ensuremath{\mathbb P}} }
\newcommand{\cC}{{\ensuremath{\mathcal C}} }
\newcommand{\cF}{{\ensuremath{\mathcal F}} }
\newcommand{\cP}{{\ensuremath{\mathcal P}} }
\newcommand{\cQ}{{\ensuremath{\mathcal Q}} }
\newcommand{\cU}{{\ensuremath{\mathcal U}} }
\newcommand{\cZ}{{\ensuremath{\mathcal Z}} }
\newcommand{\bfu}{{\ensuremath{\mathbf u}} }
\newcommand{\dd}{{\ensuremath{\mathrm d}} }
\newcommand{\de}{{\ensuremath{\mathrm e}} }
\newcommand{\dC}{{\ensuremath{\mathrm C}} }
\newcommand{\dL}{{\ensuremath{\mathrm L}} }
\newcommand{\rE}{\mathscr{E}}
\newcommand{\rL}{\mathscr{L}}
\newcommand{\R}{\mathbb{R}}
\newcommand{\N}{\mathbb{N}}
\newcommand{\ind}{\ensuremath{\mathbf{1}}}
\DeclarePairedDelimiter{\abs}{\lvert}{\rvert}
\DeclarePairedDelimiter{\norm}{\lVert}{\rVert}
\DeclarePairedDelimiterX{\inprod}[2]{\langle}{\rangle}{#1, #2}
\renewcommand{\epsilon}{\varepsilon}
\newcommand{\changetheta}
{
\let\temp\theta
\let\theta\vartheta
\let\vartheta\temp
}
\newcommand{\changephi}
{
\let\temp\phi
\let\phi\varphi
\let\varphi\temp
}
\newtheorem{theorem}{Theorem}[section]
\newtheorem{lemma}[theorem]{Lemma}
\newtheorem{proposition}[theorem]{Proposition}
\newtheorem{corollary}[theorem]{Corollary}
\newtheorem{definition}[theorem]{Definition}
\newtheorem{assumption}[theorem]{Assumption}
\newtheorem{remark}[theorem]{Remark}
\newtheoremstyle{bluehead}% name of the style to be used
  {}% measure of space to leave above the theorem. E.g.: 3pt
  {}% measure of space to leave below the theorem. E.g.: 3pt
  {\itshape}% name of font to use in the body of the theorem
  {}% measure of space to indent
  {\color{blue}\bfseries}% name of head font
  {.}% punctuation between head and body
  {5pt plus 1pt minus 1pt}% space after theorem head; " " = normal interword space
  {}% Manually specify head
\theoremstyle{bluehead}
\newtheorem{theoremblue}[theorem]{Theorem}
\newtheorem{propositionblue}[theorem]{Proposition}
\definecolor{darkviolet}{rgb}{0.58, 0.0, 0.83}
\definecolor{gre}{rgb}{0.03,0.50,0.03}
\newcommand{\blue}[1]{{\color{blue}#1}}
\numberwithin{equation}{section}
\newcommand{\mail}[1]{\href{mailto:#1}{\normalfont\texttt{#1}}}
\def\@setthanks{\vspace{-\baselineskip}\def\thanks##1{\@par##1\@addpunct.}\thankses}
\title[A Mean-field Game of Optimal Investment]{Existence and Uniqueness Results \\ for a Mean-field Game of Optimal Investment}
\author[A.~Calvia]{Alessandro~Calvia\textsuperscript{\MakeLowercase{a},1,\textdagger}}
\thanks{\noindent \textsuperscript{a} Politecnico di Milano, Department of Mathematics,
Piazza Leonardo da Vinci 32, 20133 Milano (Italy).}
\author[S.~Federico]{Salvatore~Federico\textsuperscript{\MakeLowercase{b},2,\textdagger}}
\thanks{\noindent \textsuperscript{b} University of Bologna, Department of Mathematics, Piazza di Porta San Donato 5, 40126 Bologna (Italy).}
\author[G.~Ferrari]{Giorgio~Ferrari\textsuperscript{\MakeLowercase{c},3}}
\thanks{\noindent \textsuperscript{c} Bielefeld University, Center for Mathematical Economics (IMW), Universit\"atsstrasse 25, 33615 Bielefeld (Germany).}
\author[F.~Gozzi]{Fausto~Gozzi\textsuperscript{\MakeLowercase{d},4}}
\thanks{\noindent \textsuperscript{d} LUISS University, Department of Economics and Finance, Viale Romania 32, 00197 Roma (Italy).
\\
\noindent \textsuperscript{\textdagger} Member of the Gruppo Nazionale per l’Analisi Matematica, la Probabilità e le loro Applicazioni (GNAMPA) of the Istituto Nazionale di Alta Matematica "Francesco Severi" (INdAM).
\\
\noindent \textsuperscript{1} E-mail: \mail{alessandro.calvia@polimi.it}.
\\
\noindent \textsuperscript{2} E-mail: \mail{salvatore.federico@unibo.it}.
\\
\noindent \textsuperscript{3} E-mail: \mail{giorgio.ferrari@uni-bielefeld.de}.
\\
\noindent \textsuperscript{4} E-mail: \mail{fgozzi@luiss.it}.
}
\begin{document}

\changephi
\changetheta
\allowdisplaybreaks
	
\begin{abstract}
We establish the existence and uniqueness of the equilibrium for a stochastic \mbox{mean-field} game of optimal investment. The analysis covers both finite and infinite time horizons, and the mean-field interaction of the representative company with a mass of identical and indistinguishable firms is modeled through the time-dependent price at which the produced good is sold. At equilibrium, this price is given in terms of a nonlinear function of the expected (optimally controlled) production capacity of the representative company at each time.
The proof of the existence and uniqueness of the mean-field equilibrium relies on a priori estimates and the study of nonlinear integral equations, but employs different techniques for the finite and infinite horizon cases. Additionally, we investigate the deterministic counterpart of the mean-field game under study.
\end{abstract}
\maketitle

\noindent \textbf{Keywords:} mean-field games; mean-field equilibrium; forward-backward ODEs; optimal investment; price formation.

\smallskip

\noindent \textbf{AMS 2020:} 35Q89; 47H10; 49N10; 49N80; 91A07; 91B38; 91B70.

\smallskip

\noindent \textbf{JEL classification:} C02; C61; C62; C72; D25; D41.

%\tableofcontents

\bigskip

\section{Introduction}
\label{sec:intro}

In this paper, we provide existence, uniqueness, and characterization results for the equilibrium of a mean-field model of optimal investment with competition \`a la Cournot. In the absence of interventions, the production capacity of the representative company evolves stochastically over time as a geometric Brownian motion, and its level can be increased through investment, subject to quadratic costs. The representative company discounts profits and costs at a constant rate and aims to maximize total expected profits from production, net of investment costs. Instantaneous profits depend linearly on the company's production capacity (thus, production occurs at full capacity) and on the time-dependent price of the produced good. The mean-field equilibrium investment and average production processes $(\widehat{\bf{u}}, \widehat{q})$ are such that expected total net profits are maximized and, assuming an isoelastic inverse demand function, the price is given in terms of a nonlinear function of the average optimally controlled production at each time (see also \citet{achdouetal2014:pdemodels} and Section~\ref{sec:eco} below). We are able to prove the existence and uniqueness of the equilibrium pair $(\widehat{ \bf{u}}, \widehat{q})$ when the problem's time horizon is finite or infinite. These equilibria are characterized as the unique solution to a nonstandard system of forward-backward ordinary differential equations. Furthermore, we demonstrate that the existence and uniqueness carry over to the deterministic counterpart of our model, essentially using the same techniques as in the stochastic case.

The investment problem under study falls under the category of mean-field games with scalar interaction. Mean-field games, independently introduced by \citet{LasryLions07}, and Huang, Caines, and Malham\'e \cite{HuangMalhameCaines06}, represent limit models of non-cooperative symmetric $N$-player games with mean-field interaction as the number of players $N$ tends to infinity. An exhaustive review of mean-field models can be found in the two-volume book by~\citet{carmona2018probabilistic}.

In our context, the consistency condition that the equilibrium price aligns at each time with a decreasing nonlinear function of the expected (optimally controlled) production capacity can be viewed as the limit, as the number $N$ of identical and indistinguishable companies operating in the market diverges, of the requirement that price inversely depends on the aggregate production of the entire economy, scaled by a factor of $1/N$. As discussed in \citet{HuangMalhameCaines06}, this scaling can be justified by considering situations where ``an increasing number of firms join together to serve an increasing number of consumers'' (see the discussion in~\citet{HuangMalhameCaines06}, after Equation (2.4) therein).

The equilibrium construction for a given time horizon $T\in (0,+\infty]$ follows a three-step approach. Firstly, given a deterministic path of the average production $q \coloneqq (q_s)_{s \in [0,T]}$, we solve the representative company's optimal investment problem. This is of linear-quadratic type and has an explicit solution, since the production capacity evolves as a geometric Brownian motion and is linearly dependent on the investment process, and because the performance criterion has linear dependence on the production capacity and quadratic dependence on the investment cost. Moreover, the optimal control $\widehat \bfu^{(T,q)}$ is deterministic, for any fixed average production trajectory $q$. In the second step, we calculate the expectation of the optimally controlled production capacity process, which is easily computable given the explicit representation of the state process. Finally, in the third step, we impose the consistency condition, requiring that this expectation must match $q_s$ for each time $s \in [0,T]$. This condition leads to a nonlinear integral equation for the equilibrium average production trajectory $\widehat{q} \coloneqq (\widehat{q}_s)_{s \in [0,T]}$. By construction the control $\widehat \bfu \coloneqq \widehat \bfu^{(T,\widehat{q})}$ and the function $\widehat{q}$ form a mean-field equilibrium. 

It is important to note that although the company's optimal control problem for a fixed average production trajectory $q$ is linear-quadratic, the overall mean-field problem is \emph{not} of linear-quadratic type, due to the nonlinear dependence of the net profit functional on $q$. In this regard, our results differ from those presented in \citet{Bensoussan16, DelarueFT19, tchuendom}, among others, which focus on linear-quadratic mean-field games. {{It is also worth stressing that some of our results -- particularly the existence and uniqueness results -- extend to the case of an instantaneous profit function that is more general than the isoelastic demand function considered in this paper, as well as to more general mean-field interactions, provided that suitable monotonicity conditions are imposed on the problem's data. We refer to Section~\ref{sec:eco} for further details.}}

{{For our existence and uniqueness results, we do not rely on the study of the PDE system associated with the mean-field game (the Hamilton-Jacobi-Bellman equation and the Fokker-Planck equation), nor on the related master equation; we refer to the subsequent literature review for a discussion of related studies offering a PDE perspective on the problem. Instead, we provide an \emph{ad hoc} analysis that hinges on the linear-quadratic optimization problem at hand and on the nonlinear mean-field interaction term under consideration.

This leads to a characterization of the unique equilibrium, both in the cases $T < +\infty$ and $T = +\infty$, through a nonlinear integral equation for the average production $\widehat{q}$. However, different technical arguments are used to prove existence and uniqueness in the finite- and infinite-horizon settings. Indeed, the integral equation that uniquely characterizes the equilibrium average production $\widehat{q}$ does not fall within the standard theory of integro-differential equations (see, e.g.~\citet{gripenberg1990:integraleq, lakshmikantham1995:integraleq}), since we face an initial-value problem with a backward-in-time integral term, which requires a careful specific analysis.

More in detail, the proof of existence relies on a priori estimates and Schauder's fixed-point theorem when $T < +\infty$, whereas in the case $T = +\infty$ it relies on a priori estimates and the Fr\'echet--Kolmogorov compactness theorem in $L^p$. Uniqueness is established in both cases through suitable contradiction arguments that exploit structural properties of mean-field equilibria. Notably, when $T = +\infty$, we also prove that the time-dependent equilibrium $\widehat{q}$ converges monotonically to the stationary average production level, which is obtained as the unique constant solution of the integral equation characterizing $\widehat{q}$.
}}

Given the geometric dynamics of the production capacity, the linear dependence of the profit functional on the controlled state variable, the quadratic costs of investment, and the fact that the mean-field interaction is of scalar type and only involves the expected value of the latter, it is not surprising that the mean-field equilibrium is deterministic and does not depend on the volatility coefficient $\sigma$ appearing in the production capacity's dynamics. Guided by this observation, we also consider the deterministic counterpart of the previously discussed mean-field game and show that a unique equilibrium exists in this setting as well. This equilibrium can indeed be constructed by following exactly the same arguments employed in the stochastic case.

{{Games of optimal investment into production capacity, both in stochastic and deterministic settings, are extensively studied in the economics literature, and a comprehensive overview of models and results can be found in~\citet{vives1999oligopoly}. Specifically, mean-field problems with Cournot competition have attracted considerable attention in the recent literature; see \citet{chan.sircar.2017fracking} for an insightful overview. We now provide a non-exhaustive review of papers dealing with mean-field models of investment in capacity that are relevant for our setting.

\citet{graber.bensoussan.2018existence} study the existence (and, under certain conditions, uniqueness) of solutions to a system of partial differential equations (PDEs)—namely, the Hamilton--Jacobi--Bellman (HJB) and Fokker--Planck equations—associated with a mean-field game involving Bertrand and Cournot competition among a continuum of players. In~\citet{graber2023master}, existence and uniqueness of the master equation associated with a mean-field game of controls with absorption are established, while~\citet{chan.sircar.2015bertrand} analyze dynamic mean-field games with exhaustible capacities and interactions akin to Cournot and Bertrand competition. An optimal transport perspective on Cournot--Nash equilibria is explored in~\citet{acciaio2021cournot}.

The existence of classical solutions to an extended mean-field game arising in the study of exhaustible resource production under market competition is addressed in \citet{graber2021nonlocal}, where uniqueness of the equilibrium is shown to hold if the Hamiltonian satisfies certain smallness and uniform convexity conditions. Economically, this corresponds to imposing an upper bound on the degree of substitutability among competing firms.

\citet{camilli2025learning} also consider Cournot mean-field games of controls for the production of an exhaustible resource by a continuum of producers. They establish uniqueness of the solution under general assumptions on the price function and prove convergence of a learning algorithm, which in turn yields existence of a solution to the mean-field game system.

In \citet{escribe2024mean}, a stylized model for investment in renewable power plants under long-term uncertainty---arising from weather conditions, demand fluctuations, future fuel prices, and average national weather patterns---is considered. It is shown that the $N$-player formulation of the problem admits a Nash equilibrium and that any sequence of Nash equilibria of the $N$-player game converges to the unique solution of the corresponding mean-field game.

Investment in renewable energy production is also addressed in \citet{alasseur2023mean}, which proposes a model with a large number of small competitive producers to study the effect of subsidies on the aggregate level of capacity, taking into account a cannibalization effect. The analysis is based on the master equation, and explicit formulas for the long-run equilibria are obtained.

Related price formation models, in the context of large systems of small agents trading a commodity or a security, are studied in \citet{fujii2022mean, gomes2021mean} using mean-field techniques.

Regarding irreversible investment, we refer to \citet{cao2022:stationary} and \citet{aid2025stationary}. \citet{cao2022:stationary} consider stationary discounted and ergodic mean-field games of singular controls motivated by irreversible investment and provide existence and uniqueness results, as well as connections between the two classes of problems. Applications to commodity markets and the role of regime switches in equilibrium formation are investigated in \citet{aid2025stationary}, which studies a mean-field model of firms competing \`a la Cournot and irreversibly investing in production.}}

The rest of the paper is organized as follows. The stochastic mean-field game is introduced in Section~\ref{sec:model}, and in Subsection~\ref{sec:eco} we provide its economic foundation. In Section~\ref{sec:optpb} the company's optimal investment problem is solved for both the cases $T<+\infty$ and $T=+\infty$, while existence and uniqueness of the equilibrium is shown in Section~\ref{sec:fixed_point}, again for the finite and infinite time horizon cases. The deterministic version of the mean-field problem is finally considered in Section~\ref{sec:deterministic}.

 %%%%%%%%%%%%%%%%%%%%%%%%%%%%%

\subsection{Notation}
\label{sec:notation}

In this section we collect the main notation used in this work.

\begin{itemize}
    \item Throughout the paper the set $\N$ denotes the set of natural integers without the zero element, i.e., $\N = \{1, 2, \dots \}$, while $\R$ denotes the set of real numbers.
%\st{To simplify notations, it is convenient to denote the half-open or half-closed intervals $(a,+\infty)$, $[a,+\infty)$, also by $(a,+\infty]$, $[a,+\infty]$, respectively, for any $a \in \R$.} \red{FG: NON CAPISCO QUESTA ULTIMA FRASE}
%{\color{blue}AC: serviva per dire che $(t,+\infty] = (t,+\infty)$ o $[t,+\infty] = [t,+\infty)$. Siccome ora non c'è più il tempo iniziale, tutta la frase si potrebbe sostituire con:} 
Whenever $T=+\infty$, the notation $[0,T]$ indicates the interval $[0,+\infty).$
\item For any $p \geq 1$, any measure space $(E, \rE, \mu)$, and any interval $I \subseteq \R$, we indicate by $\dL^p((E, \rE, \mu); I)$ the set of all functions with values in $I$ that are $p$-integrable with respect to $\mu$. If $E \subseteq \R$ and it is Lebesgue-measurable, we take $\rE=\rL(E)$ as the Lebesgue $\sigma$-algebra and as $\mu$ the Lebesgue measure, which is denoted by $\mathrm{Leb}$, and we simply write $\dL^p(E;I)$. When computing integrals with respect to this measure, we simply write $\dd x$ instead of $\mathrm{Leb}(\dd x)$.

\item For any $\eta > 0$ and $E, \, I \subseteq \R$, the notation $\dL^p_\eta(E;I)$ indicates the set of all functions $f \colon E \to I$ such that $t \mapsto \de^{-\eta t} f(t)$ is $p$-integrable with respect to the Lebesgue measure.

\item 
%Given two intervals $I, J \subseteq \R$, we denote by $\dC(I;J)$ the set of all continuous functions from $I$ to $J$, endowed with the usual sup-norm. 
Given two intervals $I, J \subseteq \R$, we denote by $\dC(I;J)$ the set of all continuous functions from $I$ to $J$. {If $I$ is compact, we endow this set with the usual sup-norm, which makes it a Banach space.}
The notation $\dC^p(I;J)$, $p \in \N \cup \{+\infty\}$, denotes the set of functions from $I$ to $J$ that are continuously differentiable $p$ times.
\end{itemize}

\section{The stochastic model}
\label{sec:model}

Let $(\Omega, \cF, \bbF \coloneqq (\cF_s)_{s \geq 0}, \bbP)$ be a complete filtered probability space, with $\bbF$ satisfying the usual assumptions, supporting
an $\bbF$-adapted standard Brownian motion $B$ and an independent \mbox{$\cF_0$-measurable} random variable $\xi$. Throughout the paper, $T \in (0, +\infty) \cup \{+\infty\}$ denotes a finite or infinite time horizon.
%\red{Below, when $T=+\infty$ the notation $[0,T]$ will mean $[0,+\infty)$.} {\color{blue}(AC: scriverei, invece, quanto segue)} 
%In what follows, whenever $T=+\infty$ the notation $[0,T]$ indicates the interval $[0,+\infty)$.
%For each fixed $t \in [0,T)$, we consider the real-valued $\bbF$-adapted process $X = (X_s)_{s \in [t,T]}$, which models the evolution of the representative agent's production capacity. 

We consider a real-valued $\bbF$-adapted process $X = (X_s)_{s \in [0,T]}$, satisfying the stochastic differential equation (SDE)
\begin{equation}
\label{eq:SDE}
\left\{
\begin{aligned}
&\dd X_s = -\delta X_s \, \dd s + \sigma X_s \, \dd B_s + u_s \, \dd s, \quad s \in (0,T], \\
&X_0 = \xi \, ,
\end{aligned}
\right.
\end{equation}
where $\delta, \sigma > 0$ are given coefficients,
{and $\bfu \in \dL^1(\Omega \times [0,T]; [0,+\infty))$, i.e., $\bfu$ is a non-negative, $(\cF_s)_{s \in [0,T]}$-progressively measurable process satisfying $\bbE\left[\int_0^t u_s \, \dd s\right] < +\infty$, for all $t \in [0,T]$. Process $\bfu$ is the control, which we will choose in a suitable class of admissible processes, that will be specified below.}
Whenever necessary, to stress the dependence of the solution to~\eqref{eq:SDE} on the initial condition $\xi$ and on the control $\bfu$, we denote it by $X^{\xi, \bfu}$.
%, or simply by $X^\bfu$, if the dependence on $\xi$ is not relevant.

We observe that {SDE~\eqref{eq:SDE} admits a unique strong solution, which} can be written explicitly (cf.~\citep[Problem~5.6.15]{karatzas:brown}), for each {$\bfu \in \dL^1(\Omega \times [0,T]; [0,+\infty))$}, as
\begin{equation}
\label{eq:SDEsol}
X_s^{\xi,\bfu} = Y_s \xi + \int_0^s \dfrac{Y_s}{Y_r} u_r \, \dd r, \quad s \in [0,T],
\end{equation}
where
\begin{equation}
\label{eq:Y}
Y_s \coloneqq \de^{\sigma B_s - \left(\delta + \frac{\sigma^2}{2}\right) s}, \quad s \in [0,T].
\end{equation}

In the mean-field game studied in this paper, the process $X$ describes the private state of a representative player. In particular, it models the evolution of her/his production capacity, which depreciates at a rate $\delta$ and can be increased by choosing the investment rate $\bfu$. 
{Since we require process $\bfu$ to be non-negative, we are in a setting where investment actions are irreversible. From a mathematical point of view, this ensures useful properties of process $X$, which will be discussed below and used throughout the paper.}
Note {also} that the statistical distribution of the state of the other players in the economy does not influence the production capacity level of the representative player. 

Throughout the paper, we work under the following assumption.
\begin{assumption}
\label{hp:xi}
The initial condition $\xi$ of SDE~\eqref{eq:SDE} is an $\cF_0$-measurable, positive, and integrable random variable. More precisely, $\xi > 0$, $\bbP$-a.s., and $0 < \bbE[\xi] < +\infty$.
\end{assumption}

%where $\delta > 0$ is the capital depreciation rate and $\sigma \geq 0$; $u_s \geq 0$ is the investment rate at time $s \in [0,T]$.

%We will specify later on the class of admissible controls $\cU_T$ in which we will choose the process $\bfu = (u_s)_{s \in [0,T]}$. For the time being, we point out that assuming $u_s \geq 0$, for all $s \in [0,T]$, and $\xi$ to be positive, entails that $X_s \geq 0$, for all $s \in [0,T]$.

The assumption above ensures that, for any {$\bfu \in \dL^1(\Omega \times [0,T]; [0,+\infty))$}, {$X_s^{\xi, \bfu} > 0$}, $\bbP$-a.s., for all $s \in [0,T]$. Thus, the production capacity level of the representative agent is never negative.
Moreover, it grants us the following result, which will be used later on.

\begin{lemma}
\label{lem:X_finite_expectation}
Under Assumption~\ref{hp:xi} and for any {$\bfu \in \dL^1(\Omega \times [0,T]; [0,+\infty))$}, the unique solution to SDE~\eqref{eq:SDE} has finite first moment, given by
\begin{equation}
\label{eq:X_expectation}
\bbE[X^{\xi,\bfu}_s] = \bbE[\xi] \de^{-\delta s} + \bbE\left[\int_0^s \de^{-\delta(s-r)} u_r \, \dd r\right], \quad s \in [0,T].       
\end{equation}
\end{lemma}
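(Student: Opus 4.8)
The plan is to exploit the explicit solution formula~\eqref{eq:SDEsol} and reduce the computation to an application of Fubini--Tonelli's theorem, using the independence structure and the integrability assumptions at hand. First I would recall from~\eqref{eq:SDEsol} that
\[
X_s^{\xi,\bfu} = Y_s \xi + \int_0^s \frac{Y_s}{Y_r} u_r \, \dd r, \qquad s \in [0,T],
\]
with $Y_s = \de^{\sigma B_s - (\delta + \sigma^2/2)s}$. The key elementary computation is that $\bbE[Y_s] = \de^{-\delta s}$, which follows from the formula for the exponential moment of a Gaussian (i.e.\ $\bbE[\de^{\sigma B_s}] = \de^{\sigma^2 s/2}$). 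More generally, for $0 \le r \le s$, the increment $B_s - B_r$ is independent of $\cF_r$ and Gaussian with variance $s-r$, so that $\bbE\!\left[\tfrac{Y_s}{Y_r}\,\middle|\,\cF_r\right] = \de^{-\delta(s-r)}$, a deterministic quantity.

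Next I would take expectations in the solution formula. The first term contributes $\bbE[Y_s \xi] = \bbE[Y_s]\,\bbE[\xi] = \de^{-\delta s}\,\bbE[\xi]$, using that $\xi$ is $\cF_0$-measurable (indeed, independent of $B$) and that both $\xi$ and $Y_s$ are integrable. For the second term, I would write $\tfrac{Y_s}{Y_r} u_r = \tfrac{Y_s}{Y_r} u_r$ and apply Tonelli's theorem to the non-negative integrand over $\Omega \times [0,s]$ (legitimate since $\bfu \ge 0$ and $Y_s/Y_r > 0$) to get
\[
\bbE\!\left[\int_0^s \frac{Y_s}{Y_r} u_r \, \dd r\right] = \int_0^s \bbE\!\left[\frac{Y_s}{Y_r}\, u_r\right] \dd r.
\]
Then, for fixed $r$, I would condition on $\cF_r$: since $u_r$ is $\cF_r$-measurable (by progressive measurability) while $Y_s/Y_r$ has conditional expectation $\de^{-\delta(s-r)}$ given $\cF_r$, the tower property yields $\bbE\!\left[\tfrac{Y_s}{Y_r} u_r\right] = \de^{-\delta(s-r)}\,\bbE[u_r]$. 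Plugging back in and recognizing $\int_0^s \de^{-\delta(s-r)} \bbE[u_r]\,\dd r = \bbE\!\left[\int_0^s \de^{-\delta(s-r)} u_r\,\dd r\right]$ (again by Tonelli) gives exactly~\eqref{eq:X_expectation}. Finiteness of the first moment then follows because $\de^{-\delta s}\bbE[\xi] < +\infty$ by Assumption~\ref{hp:xi} and $\bbE\!\left[\int_0^s \de^{-\delta(s-r)} u_r\,\dd r\right] \le \bbE\!\left[\int_0^s u_r\,\dd r\right] < +\infty$ by the admissibility requirement $\bfu \in \dL^1(\Omega \times [0,T]; [0,+\infty))$.

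The only genuinely delicate point — and the one I would be most careful about — is the justification of the interchange of expectation and time-integral and of the conditioning argument for the stochastic integral-type term: one must be sure that all manipulations stay within the realm of non-negative (or absolutely integrable) quantities so that Tonelli applies without needing an a priori integrability bound, and that the conditional expectation $\bbE[Y_s/Y_r \mid \cF_r] = \de^{-\delta(s-r)}$ is applied to the $\cF_r$-measurable factor $u_r$ correctly. Everything else is a routine Gaussian moment computation; no fixed-point or compactness machinery is needed here.
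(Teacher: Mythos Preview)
Your proposal is correct and follows essentially the same route as the paper: both start from the explicit solution~\eqref{eq:SDEsol}, handle the first term via independence of $Y_s$ from $\cF_0$, and treat the second term by Tonelli plus the independence of $Y_s/Y_r$ from $\cF_r$. The only cosmetic difference is that you phrase the factorization via conditioning on $\cF_r$ and the tower property, whereas the paper invokes independence directly to write $\bbE[\tfrac{Y_s}{Y_r} u_r] = \bbE[\tfrac{Y_s}{Y_r}]\,\bbE[u_r]$; these are equivalent.
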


\begin{proof}
%Fix $t \in [0,T)$, $\xi \in \dL^1((\Omega, \cF_t, \bbP); (0,+\infty))$, and $\bfu$ as above.
Fix $\xi$ and $\bfu$ as above.
Using the expression of $X^{\xi, \bfu}$ given in~\eqref{eq:SDEsol} we have that
\begin{equation*}
\bbE[X^{\xi,\bfu}_s] = \bbE[Y_s \xi] + \bbE\left[\int_0^s \dfrac{Y_s}{Y_r} u_r\right] \, \dd r, \quad s \in [0,T].
\end{equation*}
Since the random variable $Y_s \xi$ is non-negative and $Y_s$ is independent of $\cF_0$, for all $s \in [0,T]$, we can directly compute the first summand
\begin{equation*}
\bbE[Y_s \xi] = \bbE[\xi] \bbE[Y_s] = \bbE[\xi] \de^{-\delta s}.
\end{equation*}
For the second summand, observe that the integrand is non-negative and that, for each fixed $s \in [0,T]$, the random variable $\frac{Y_s}{Y_r}$ is independent of $\cF_r$, for all $r \in [0,s]$. Therefore, applying the Fubini-Tonelli theorem,
\begin{equation*}
\bbE\left[\int_0^s \dfrac{Y_s}{Y_r} u_r \, \dd r\right]  = \int_0^s \bbE\left[\dfrac{Y_s}{Y_r} u_r\right] \dd r = \int_0^s \bbE\left[\dfrac{Y_s}{Y_r}\right] \bbE[u_r] \, \dd r = \bbE\left[\int_0^s \de^{-\delta(s-r)} u_r \, \dd r\right],
\end{equation*}
which is finite thanks to the assumptions on $\bfu$. Therefore, we get~\eqref{eq:X_expectation}.
\end{proof}

In our mean-field game, the representative company's instantaneous profit function is derived from the isoelastic demand function of Spence-Dixit-Stiglitz preferences (see \citet[p.~7, Footnote~5]{achdouetal2014:pdemodels} and \citet{luttmer2007:selection}), as explained in Section~\ref{sec:eco} below. Moreover, the company faces a quadratic cost of investment.
This leads to the expected net profit functional
\begin{equation}\label{eq:netprofitfunct}
J_{T,q}(\xi, \bfu) \coloneqq \bbE\left[\int_0^T \de^{-\rho s} \left\{X_s^{\xi,\bfu} q_s^{-\beta} - \dfrac 12 u_s^2\right\} \dd s\right] \,.
\end{equation}
Here, $\rho > 0$ is a discount factor, $\beta > 0$ is a fixed parameter, and $q = (q_s)_{s \in [0,T]}$ is a given deterministic measurable function. 
{
The control process $\bfu$ is chosen in either of the following two classes of admissible controls: if $T < +\infty$,
\begin{align}
\label{eq:admctlr_finite}
  \cU_{T} \coloneqq &\biggl\{\bfu = (u_s)_{s \in [0,T]} \text{ s.t. } \bfu \colon \Omega \times [0,T] \to [0,+\infty) \text{ is } (\cF_s)_{s \in [0,T]}\text{-progressively measurable and } \notag
\\
&\qquad \bbE\biggl[\int_0^T u_s^2 \, \dd s\biggr] < +\infty\biggr\} \, ;
\end{align}
if, instead, $T = +\infty$,
\begin{align}
\label{eq:admctlr_infinite}
  \cU_{\infty} \coloneqq
	&\biggl\{\bfu = (u_s)_{s \geq 0} \text{ s.t. } \bfu \colon \Omega \times [0,\infty) \to [0,+\infty) \text{ is } \bbF\text{-progressively measurable,} \notag
	\\
	&\qquad \bbE\left[\int_0^t u_s \, \dd s\right] < +\infty, \, \bbP\text{-a.s., } \forall t \geq 0, \text{ and } \bbE\biggl[\int_0^{+\infty} \de^{-\rho s} u_s^2 \, \dd s\biggr] < +\infty\biggr\} \, .
\end{align}%
}%
At equilibrium, the function $q$ will identify with the average production capacity of the whole population of agents. This is formalized in the next definition.

%by
%\begin{equation}\label{eq:q}
%q_s \coloneqq \int_\R x \, \nu_s(\dd x), \quad s \in [0,T],
%\end{equation}
%with $\nu = (\nu_s)_{s \in [0,T]}$ a given flow of probability measures with finite first moment. The flow $\nu$ describes the evolution of the statistical distribution of the production capacity levels of the agents in the economy.

\begin{definition}
\label{def:equilibrium}
Fix a random variable $\xi$ under Assumption~\ref{hp:xi}.

A pair $(\widehat \bfu, \widehat{q})$, where $\widehat \bfu \in \cU_T$ and $\widehat{q} \colon [0,T] \to (0,+\infty)$ is a measurable function, is an \emph{equilibrium} of the mean-field game if
\begin{enumerate}[label=(\roman*)]
\item \label{def:equilibrium:opt} 
$J_{T,\widehat{q}}(\xi, \widehat \bfu) \geq J_{T,\widehat{q}}(\xi, \bfu)$, for all $\bfu \in \cU_T$;
\item \label{def:equilibrium:mean}
$\widehat{q}_s = \bbE[X_s^{\xi,\widehat \bfu}]$, for all $s \in [0,T]$.
\end{enumerate}
\end{definition}

{From SDE~\eqref{eq:SDE}, the expression of the expected net profit functional in~\eqref{eq:netprofitfunct}, and the definition of equilibrium given above, we see that the mean-field interaction between firms appears only in the functional to be maximized, and not also in the state equation.
We also emphasize that the interaction is only through the average production capacity, thus our problem is set in a \mbox{Cournot-like} competition framework and, as we are going to see, the optimal control for the representative firm is going to depend on this quantitity.

The very specific nature of our problem allows us to tackle it with \textit{ad hoc} arguments, as we outline below, rather than using either the standard analytic approach (see, e.g.,~\citep{cardaliaguet:notes}), where a system of coupled Hamilton-Jacobi-Bellman and Kolmogorov-Fokker-Plank equations is studied, or the standard probabilistic approach (see, e.g.,~\citep{carmona2018probabilistic}), where a system of Forward Backward Stochastic Differential Equations is considered.}

In the next sections we show that there exists a unique equilibrium by adopting a classic fixed point approach.
The first step is to solve an optimization problem for a given measurable function $q$, representing the evolution of the average production capacity level of the agents in the economy. We show that there exists a unique explicit and deterministic optimal control $\widehat \bfu \in \cU_T$, depending on $q$, which provides us with the best response of the representative agent to the distribution of the states of the other agents in the economy, summarized by the average $q$.

The second step is to determine the optimally controlled dynamics of the production capacity level of the representative agent. Thanks to~\eqref{eq:X_expectation}, we can compute explicitly $\bbE[X^{\xi, \widehat \bfu}_s]$, $s \in [0,T]$. Since the optimal control $\widehat \bfu$ depends on $q$, the fixed point argument follows from condition~\ref{def:equilibrium:mean} in Definition~\ref{def:equilibrium}. We show that proving that there exists a unique equilibrium reduces to finding the unique solution of an integro-differential equation.

\subsection{The economic foundation}
\label{sec:eco}
To provide an economic foundation of our model, let us introduce the $N$-player game corresponding to the mean-field game introduced above.

We consider a finite number of firms $N$, whose production capacities evolve stochastically over time according to the following dynamics
\begin{equation}
\label{eq:SDE_N}
\left\{
\begin{aligned}
&\dd X^i_s = -\delta X^i_s \, \dd s + \sigma X^i_s \, \dd B^i_s + u^i_s \, \dd s, \quad s \in (0,T], \\
&X^i_0 = \xi^i \, ,
\end{aligned}
\right.
\end{equation}
where $B^1, \dots, B^n$ are independent Wiener processes and $\xi^1, \dots, \xi^N$ are i.i.d., $\cF_0$-measurable, positive, and integrable random variables, independent from the Brownian motions $B^i$, $i=1,\dots,n$. 

Each firm may increase its own production capacity, by investing at rate $\bfu^i = (u^i_s)_{s \in [0,T]} \in \cU_T$, and aims at maximizing the expected net profit functional
\begin{equation}
\label{eq:netprofitfunct_N}
\bbE\Biggl[\int_0^T \de^{-\rho s} \Biggl\{X^i_s \biggl(\frac 1N \sum_{j=1}^N X^j_s\biggr)^{-\beta} - \dfrac 12 (u^i_s)^2\Biggr\} \dd s\Biggr] \,.
\end{equation}

Taking the limit as $N \to \infty$ and considering Nash equilibria, we expect that the dynamics~\eqref{eq:SDE_N} of each firm converges to the dynamics~\eqref{eq:SDE} of the representative player of the mean-field game, and that at equilibrium the functional~\eqref{eq:netprofitfunct_N} converges to $J_{T, \widehat q}$ given in~\eqref{eq:netprofitfunct}, with $\widehat q$ satisfying the consistency condition of Definition~\ref{def:equilibrium}-\ref{def:equilibrium:mean}.

We leave for future research the study of the $N$-player game and its convergence to the corresponding mean-field game, since this requires a detailed and careful analysis which is outside the scope of this paper, as we intend to focus on directly studying the mean-field game.

Now, instead, we want to focus on providing the precise derivation of the instantaneous profit function appearing in~\eqref{eq:netprofitfunct_N}, i.e., the function 
\begin{equation}
\label{eq:profit_N}
    (x_1, \dots, x_n) \mapsto x_i \biggl(\frac 1N \sum_{j=1}^N {x_j}\biggr)^{-\beta},    
\end{equation}
for any fixed $i = 1, \dots, N$. This in turn justifies the form of the profit function $(x,p) \mapsto xp^{-\beta}$ appearing in~\eqref{eq:netprofitfunct}.
We follow an approach close to the one discussed in~\citet[p.~7, Footnote~5]{achdouetal2014:pdemodels}.

Assume that each player $i = 1, \dots, N$ in our economy is a firm producing a single good and facing the demand function
\begin{equation*}
\left(\frac{\pi_i}{P}\right)^{-\gamma}, \quad \text{with } \gamma > 1,
\end{equation*}
where $\gamma$ is the elasticity of substitution, {$\pi_i$} is the price set by firm $i$ and $P = \left(\frac 1N \sum_{j=1}^N \pi_j^{1-\gamma}\right)^{1/(1-\gamma)}$ is a \textit{price index}.
This choice of the demand function is standard in economics and is justified by the Spence-Dixit-Stiglitz preferences (see, e.g.,~\citet{dixit1977monopolistic}).

Each firm aims at maximizing the function
\begin{equation*}
\pi_i \mapsto \left\{\pi_i \left(\frac{\pi_i}{P}\right)^{-\gamma} - \frac{1}{x_i^{{\eta}}} \left(\frac{\pi_i}{P}\right)^{-\gamma} \right\},
\end{equation*}
where $x_i > 0$ is the productivity (or size) of firm $i$ and, hence, costs decrease with respect to productivity, according to the factor $x_i^{-{\eta}}$, where {$\eta > 0$} is a given parameter. We can consider each firm production capacity as a proxy for $x_i$ and, thus, this setting is coherent with the model introduced above. 

It is easy to see that the maximum profit is 
\begin{equation}
\label{eq:max_profit_N}
\frac{1}{\gamma-1} \left(\frac{\gamma}{\gamma - 1}\right)^{-\gamma} x_i^{{\eta}(\gamma-1)} P^\gamma \, ,
\end{equation}
which is attained setting the price $\pi_i = \frac{\gamma}{(\gamma-1)}x_i^{-{\eta}}$.

This entails that $P = \frac{\gamma}{(\gamma-1)} \left(\frac 1N \sum_{j=1}^N x_j^{{\eta}(\gamma-1)} \right)^{-1/(\gamma-1)}$, and hence, substituting this expression into~\eqref{eq:max_profit_N}, we get that the profit is
\begin{equation}
\label{eq:profit_general_N}
\frac{1}{\gamma-1} x_i^{{\eta}(\gamma-1)}\left(\frac 1N \sum_{j=1}^N x_j^{{\eta}(\gamma-1)}\right)^{-\frac{\gamma}{\gamma-1}}\, .
\end{equation}
{Therefore, setting $\beta = \frac{\gamma}{\gamma-1}$, $\eta = \frac{1}{\gamma-1}$,
%and $p = \frac 1N \sum_{j=1}^N x_j$,
we get the profit function~\eqref{eq:profit_N}}, except for the constant $\frac{1}{\gamma-1}$ which does not affect the optimization problem.
{We emphasize once more that~\eqref{eq:profit_N} justifies the form of the profit function $(x,p) \mapsto xp^{-\beta}$ appearing in~\eqref{eq:netprofitfunct}. Indeed, we formally substitute the production capacity $x_i$ of firm $i$ with the production capacity $x$ of the representative firm, and we replace $\frac 1N \sum_{j=1}^N x_j$ with variable $p$ which, at equilibrium, will coincide with the average production capacity of the mean-field game.} 

%Note, however, that in the discounted net profit functional introduced in~\eqref{eq:netprofitfunct} we do not restrict the choice of the parameter $\beta$ to the one dictated by the economic application outlined above. Instead, we consider any possible value $\beta > 0$, as the mathematical results that we prove can be stated in this more general framework.

%Each firm profit is increasing in its production capacity and decreasing w.r.t. the average of all other agents' production capacitys. 

The discussion above highlights that it is possible to consider more general forms of interaction between players in the profit function, which can be taken of the form
\begin{equation}
\label{eq:profitmoregeneral}
\Phi(x)\, F\left(\frac 1N \sum_{j=1}^N f(x_j)\right),
\end{equation}
for some $\Phi, f \colon \R \to [0,+\infty)$ strictly increasing and $F \colon [0,+\infty) \to [0,+\infty)$, see, e.g.,~\citep{cao2022:stationary}.
However, with the aim of obtaining an explicit characterization of the equilibrium, in this paper we focus on the particular case in which $\Phi$ and $f$ are the identity and $F(x)=x^{-\beta}$, as in~\eqref{eq:profit_general_N}. Nonetheless, it is worth noting that part of our subsequent results -- in particular the existence ones -- could be extended to the more general case of profit function~\eqref{eq:profitmoregeneral}, upon exploiting the monotonicity of functions $\Phi, F,$ and $f$.

\section{The optimization problem}
\label{sec:optpb}
In this section we consider the optimization problem associated to the mean-field game described in Section~\ref{sec:model}. We show that, for each possible choice (in a suitable class of measurable functions) of the function $q$ appearing in the discounted net profit functional~\eqref{eq:netprofitfunct}, the corresponding maximization problem has a unique solution and we compute explicitly the associated optimal control.

%From this point onward we work under the following assumption.
%\begin{assumption}
%\label{hp:xi}
%The initial condition $\xi$ of SDE~\eqref{eq:SDE} is an $\cF_t$-measurable, positive, and integrable random variable. More precisely, $\xi > 0$, $\bbP$-a.s. \red{SF+FG: Dovrebbe bastare $\E[\xi]>0$}, and $\bbE[\xi] < +\infty$.
%\end{assumption}
%
%\blue{AC: Da un punto di vista matematico e per i risultati che diamo dovrebbe essere sufficiente che $\bbE[\xi] > 0$, però così non si garantisce che $X$ resti positivo, che occorre per l'interpretazione economica.}

We divide our analysis into two subsections, the first devoted to the finite time horizon case, the second one to the infinite time horizon case.
{Since an additional technical condition is required to solve the infinite time horizon case, we discuss the two problems separately, for the sake of clarity.}

\subsection{The finite time horizon case}
\label{sec:optpb:finite}
Let us consider the finite time horizon case, i.e., fix $T < +\infty$.
{We introduce the set
\begin{equation}\label{eq:QT}
    \cQ_T \coloneqq \{q \colon [0,T] \to (0,+\infty) \text{ s.t. } q^{-\beta} \in \dL^1((0,T]; (0,+\infty))\} \, .
\end{equation}
}
For each fixed {$q \in \cQ_T$}, we consider the problem of maximizing the functional
\begin{equation}
\label{eq:J_finite}
J_{T,q}(\xi, \bfu) \coloneqq \bbE\left[\int_0^T \de^{-\rho s} \left\{X^{\xi,\bfu}_s q_s^{-\beta} - \dfrac 12 u_s^2\right\} \dd s\right],
\end{equation}
where $\xi$ satisfies Assumption~\ref{hp:xi}, $\bfu$ is chosen in the class of admissible controls introduced in~\eqref{eq:admctlr_finite},
%\begin{align*}
%\label{eq:admctlr_finite}
%  \cU_{T} \coloneqq &\biggl\{\bfu = (u_s)_{s \in [0,T]} \text{ s.t. } \bfu \colon \Omega \times [0,T] \to [0,+\infty) \text{ is } (\cF_s)_{s \in [0,T]}\text{-progressively measurable and }
%\\
%&\qquad \bbE\biggl[\int_0^T u_s^2 \, \dd s\biggr] < +\infty\biggr\},
%\end{align*}
and $X^{\xi,\bfu}$ is the unique solution of SDE~\eqref{eq:SDE}.
We also introduce the value function corresponding to the optimization problem, namely,
\begin{equation}
\label{eq:V_finite}
V_{T,q}(\xi) \coloneqq \sup_{\bfu \in \cU_{T}} J_{T,q}(\xi, \bfu), \quad \xi \in \dL^1((\Omega, \cF_0, \bbP); (0,+\infty)).
\end{equation}

%It is worth noting that for each fixed $r \in [t,T]$, the process $(M_{s;r})_{s \in [r,T]}$ is a non-negative $\bbF$-martingale, independent from $\cF_r$, and such that $\bbE[M_{s;r}] = 1$, $s \in [r,T]$.
%From these facts, applying the expectation in~\eqref{eq:SDEsol} and using the Fubini-Tonelli theorem, we easily deduce that
%\begin{equation}
%\label{eq:Xexpectation}
%\bbE[X_s^{\xi,\bfu}] = \de^{-\delta(s-t)} \bbE[\xi] + \bbE\left[\int_t^s \de^{-\delta(s-r)} u_r \, \dd r\right] < +\infty, \quad s \in [t,T].
%\end{equation}
In the next {Lemma} we are going to show important properties of the functional $J_{T,q}$.

\begin{lemma}
\label{prop:J_finite_properties}
%Fix $t \in [0,T)$, a random variable $\xi$ satisfying Assumption~\ref{hp:xi}, $\bfu \in \cU_{t,T}$, and $p \colon [t,T] \to (0,+\infty)$ such that $p^{-\beta} \in \dL^1((t,T]; (0,+\infty))$.
Fix a random variable $\xi$ satisfying Assumption~\ref{hp:xi}, $\bfu \in \cU_{T}$, and {$q \in \cQ_T$}.
Then, the functional $J_{T,q}$ defined in~\eqref{eq:J_finite} is finite and verifies
\begin{equation}
\label{eq:J_finite_rewrite}
J_{T,q}(\xi,\bfu) = \bbE[\xi] z^{(T,q)}_0 + \bbE\left[\int_0^T \de^{-\rho s} \left\{ z^{(T,q)}_s u_s - \dfrac 12 u_s^2\right\} \dd s\right],
\end{equation}
%\begin{equation}
%\label{eq:J_finite_rewrite}
%J_{T,p}(t,\xi;\bfu) = \bbE[\xi] \de^{-\rho t} z^{(T,p)}_t + \bbE\left[\int_t^T \de^{-\rho s} \left\{ z^{(T,p)}_s u_s - \dfrac 12 u_s^2\right\} \dd s\right],
%\end{equation}
where $z^{(T,q)} \colon [0,T] \to [0,+\infty)$ is the deterministic function given by
\begin{equation}
\label{eq:z_finite}
z^{(T,q)}_s \coloneqq \int_s^T \de^{-(\rho+\delta) (r-s)} q_r^{-\beta} \, \dd r, \quad s \in [0,T].
\end{equation}
Moreover, if $\xi, \xi^\prime$ both verify Assumption~\ref{hp:xi} and are such that $\bbE[\xi] = \bbE[\xi^\prime]$, then $J_{T,q}(\xi;\bfu) = J_{T,q}(\xi^\prime;\bfu)$, for any $\bfu \in \cU_{T}$.
\end{lemma}

\begin{proof}
%Fix $t \in [0,T)$, $\xi \in \dL^1((\Omega, \cF_t, \bbP); (0,+\infty))$, and $\bfu \in \cU_{t,T}$.
Fix $\xi \in \dL^1((\Omega, \cF_0, \bbP); (0,+\infty))$, $\bfu \in \cU_{T}${, and $q \in \cQ_T$.}
Since the assumptions of Lemma~\ref{lem:X_finite_expectation} are verified, we can use~\eqref{eq:X_expectation} and apply the Fubini-Tonelli theorem (note that the integrand is non-negative) to get
\begin{equation*}
\bbE\left[\int_0^T \de^{-\rho s} X^{\xi,\bfu}_s q_s^{-\beta} \, \dd s \right]
= \bbE[\xi] \int_0^T \de^{-\rho s} \de^{-\delta s} q_s^{-\beta} \, \dd s + \bbE\left[\int_0^T \int_0^s \de^{-\rho s} \de^{-\delta(s-r)} q_s^{-\beta} u_r \, \dd r \, \dd s \right].
\end{equation*}

Clearly, the first summand of the last equality is finite, thanks to the assumptions on $\xi$ and $q$. Moreover,
\begin{equation*}
\bbE[\xi] \int_0^T \de^{-\rho s} \de^{-\delta s} q_s^{-\beta} \, \dd s = \bbE[\xi] z^{(T,q)}_0.
\end{equation*}
Also the second summand is finite, thanks to the assumptions on $q$ and $\bfu$. Exchanging the order of the two time integrals we get
\begin{align*}
&\bbE\left[\int_0^T \int_0^s \de^{-\rho s} \de^{-\delta(s-r)} q_s^{-\beta} u_r \, \dd r \, \dd s \right]
= \bbE\left[\int_0^T \de^{\delta r} u_r \int_r^T \de^{-(\rho+\delta) s} q_s^{-\beta} \, \dd s \, \dd r \right]
\\
= &\bbE\left[\int_0^T \de^{-\rho r} u_r \int_r^T \de^{-(\rho+\delta) (s-r)} q_s^{-\beta} \, \dd s \, \dd r \right]
= \bbE\left[\int_0^T \de^{-\rho r} z^{(T,q)}_r u_r \, \dd r \right].
\end{align*}
It easily follows that $J_{T,q}$ is finite and that Equation~\eqref{eq:J_finite_rewrite} holds, which also entails the last statement of the {Lemma}.
\end{proof}

%The last statement in Proposition~\ref{prop:J_finite_properties} tells us that the functional $J_{T,p}$ depends on the initial condition $\xi$ only through its average. This is particularly important since it allows us to introduce the functional
%\begin{equation}
%\label{eq:J_finite_new}
%\cJ_{T,p}(t,q;\bfu) = q \, \de^{-\rho t} z^{(T,p)}_t + \bbE\left[\int_t^T \de^{-\rho s} \left\{ z^{(T,p)}_s u_s - \dfrac 12 u_s^2\right\} \dd s\right],
%\end{equation}
%which verifies $\cJ_{T,p}(t,\bbE[\xi];\bfu) = J_{T,p}(t,\xi;\bfu)$, for all $t \in [0,T)$, $\xi \in \dL^1((\Omega, \cF_t, \bbP); (0,+\infty))$, and $\bfu \in \cU_{t,T}$. We can thus consider the equivalent problem of maximizing $\cJ_{T,p}$ and introduce the corresponding value function
%\begin{equation}
%\label{eq:V_finite_new}
%\cV_{T,p}(t,q) \coloneqq \sup_{\bfu \in \cU_{t,T}} \cJ_{T,p}(t,q;\bfu), \quad t \in [0,T), \, q \in (0,+\infty),
%\end{equation}
%which verifies $\cV_{T,p}(t,\bbE[\xi]) = V_{T,p}(t,\xi)$, for all $t \in [0,T)$, $\xi \in \dL^1((\Omega, \cF_t, \bbP); (0,+\infty))$, where $\cV_{T,p}$ is the value function introduced in~\eqref{eq:V_finite}.
%This permits us to find the optimal control for our optimization problem and, as a byproduct, to compute explicitly the value function.

\begin{remark}
\label{rem:V_average_finite}
The last statement in {Lemma}~\ref{prop:J_finite_properties} implies that the value function $V_{T,q}$ depends on the initial condition $\xi$ only through its average. More precisely, if $\xi, \xi^\prime$ both verify Assumption~\ref{hp:xi} and are such that $\bbE[\xi] = \bbE[\xi^\prime]$, then $V_{T,q}(\xi) = V_{T,q}(\xi^\prime)$.
\end{remark}

Thanks to the {Lemma} above we can find the optimal control for our optimization problem and this, as a byproduct, allows us to explicitly compute the value function.
\begin{proposition}
\label{th:V_finite}
%Fix $t \in [0,T)$, a random variable $\xi$ satisfying Assumption~\ref{hp:xi}, and $p \colon [t,T] \to (0,+\infty)$ such that $p^{-\beta} \in \dL^1((t,T]; (0,+\infty))$.
Fix a random variable $\xi$ satisfying Assumption~\ref{hp:xi} and {$q \in \cQ_T$}.
Then, $\widehat \bfu^{(T,q)} \coloneqq z^{(T,q)} \in \cU_{T}$, where $z^{(T,q)}$ is the function defined in~\eqref{eq:z_finite}, is an optimal control for problem~\eqref{eq:V_finite}, which is deterministic and independent of $\xi$.

Moreover, $\widehat \bfu^{(T,q)}$ is essentially unique, i.e., if $\overline \bfu^{(T,q)} \in \cU_{T}$ is an optimal control for problem~\eqref{eq:V_finite} different from $\widehat \bfu^{(T,q)}$, then
\begin{equation*}
\overline u^{(T,q)}_s = \widehat u^{(T,q)}_s, \quad \text{for $\bbP \otimes \mathrm{Leb}$-a.e. } (\omega,s) \in \Omega \times [0,T].
\end{equation*}
%Then, there exists a unique optimal control $\widehat \bfu^{(T,p)} = (\widehat u^{(T,p)}_s)_{s \in [t,T]}$ for problem~\eqref{eq:V_finite}, which is deterministic and independent of $\xi$, given by
%\begin{equation}
%\label{eq:opt_ctrl_finite}
%\widehat u^{(T,p)}_s \coloneqq z^{(T,p)}_s, \quad \text{for $\bbP \otimes \mathrm{Leb}$-a.e. } (\omega,s) \in \Omega \times [t,T],
%\end{equation}
%where $z^{(T,p)}$ is the function defined in~\eqref{eq:z_finite}.

Finally, the value of the optimization problem admits the explicit expression
%\begin{equation}
%\label{eq:V_finite_explicit}
%V_{T,p}(t,\xi) = \bbE[\xi] \, \de^{-\rho t} z^{(T,p)}_t + \dfrac{1}{2} \int_t^T \de^{-\rho s} (z^{(T,p)}_s)^2 \, \dd s,
%\end{equation}
\begin{equation}
\label{eq:V_finite_explicit}
V_{T,q}(\xi) = \bbE[\xi] z^{(T,q)}_0 + \dfrac{1}{2} \int_0^T \de^{-\rho s} (z^{(T,q)}_s)^2 \, \dd s,
\end{equation}
and the optimally controlled state process $X^{\xi, \widehat \bfu^{(T,q)}}$ is given by
\begin{equation}
\label{eq:X_opt_ctrl_finite}
X^{\xi, \widehat \bfu^{(T,q)}}_s = Y_s \xi + \int_0^s \dfrac{Y_s}{Y_r} z^{(T,q)}_r \, \dd r, \quad s \in [0,T] \, ,
\end{equation}
where $Y$ is the process defined in~\eqref{eq:Y}.
\end{proposition}

\begin{proof}
%Fix $t \in [0,T)$ and $\xi \in \dL^1((\Omega, \cF_t, \bbP); (0,+\infty))$.
Fix $\xi \in \dL^1((\Omega, \cF_0, \bbP); (0,+\infty))$ {$q \in \cQ_T$.}
From~\eqref{eq:J_finite_rewrite} it immediately follows that
\begin{equation}
\label{eq:V_finite_proof}
V_{T,q}(\xi) = \bbE[\xi] z^{(T,q)}_0 + \sup_{\bfu \in \cU_{T}} \bbE\left[\int_0^T \de^{-\rho s} \left\{ z^{(T,q)}_s u_s - \dfrac 12 u_s^2\right\} \dd s\right].
\end{equation}
%\begin{equation}
%\label{eq:V_finite_proof}
%V(t,\xi) = \bbE[\xi] \de^{-\rho t} z^{(T,p)}_t + \sup_{\bfu \in \cU_{t,T}} \bbE\left[\int_t^T \de^{-\rho s} \left\{ z^{(T,p)}_s u_s - \dfrac 12 u_s^2\right\} \dd s\right].
%\end{equation}
Hence, if we can find $\widehat \bfu^{(T,q)} \in \cU_{T}$ that maximizes the integrand $z^{(T,q)}_s u_s - \dfrac 12 u_s^2$, for $\bbP \otimes \mathrm{Leb}$-a.e. $(\omega,s) \in \Omega \times [0,T]$, then $\widehat \bfu^{(T,q)}$ it must be optimal.
Clearly, the integrand is maximized in the sense above if we take $\widehat \bfu^{(T,q)} = z^{(T,q)}$. Its admissibility is a consequence of the fact that $z^{(T,q)}$ is bounded. Therefore, $\widehat \bfu^{(T,q)}$ is optimal, and clearly essentially unique in the sense specified above.
Substituting its definition in~\eqref{eq:V_finite_proof} we immediately get~\eqref{eq:V_finite_explicit} and~\eqref{eq:X_opt_ctrl_finite}.
\end{proof}

\subsection{The infinite time horizon case}
\label{sec:optpb:infinite}
%We analyze now the infinite time horizon case. 
%Let $T = +\infty$ and $t \geq 0$. 
We analyze now the infinite time horizon case, i.e., we set $T = +\infty$.
{Let us define, for any given measurable $q \colon [0,+\infty) \to (0,+\infty)$ the function
\begin{equation}
\label{eq:z_infinite}
z^{(\infty,q)}_s \coloneqq \int_s^{+\infty} \de^{-(\rho+\delta) (r-s)} q_r^{-\beta} \, \dd r \, , \quad s \geq 0 \, .
\end{equation}
We introduce the set
\begin{equation}\label{eq:Qinfty}
    \cQ_\infty \coloneqq \{q \colon [0,+\infty) \to (0,+\infty) \text{ s.t. } q^{-\beta} \in \dL^1((0,+\infty); (0,+\infty)) \text{ and } z^{(\infty,q)} \text{ is bounded} \} \, .
\end{equation}
Note that in the definition of $\cQ_\infty$ we require an additional technical condition with respect to the definition of $\cQ_T$ in the finite time horizon case. This is needed to prove the results of this subsection, namely, Lemma~\ref{prop:J_infinite_properties} and Proposition~\ref{th:V_infinite}.}

For each fixed {$q \in \cQ_\infty$} we consider the problem of maximizing the functional
%\begin{equation}
%\label{eq:J_infinite}
%J_{\infty,p}(t,\xi; \bfu) \coloneqq \bbE\left[\int_t^{+\infty} \de^{-\rho s} \left\{X^{\xi,\bfu}_s p_s^{-\beta} - \dfrac 12 u_s^2\right\} \, \dd s\right],
%\end{equation}
\begin{equation}
\label{eq:J_infinite}
J_{\infty,q}(\xi, \bfu) \coloneqq \bbE\left[\int_0^{+\infty} \de^{-\rho s} \left\{X^{\xi,\bfu}_s q_s^{-\beta} - \dfrac 12 u_s^2\right\} \dd s\right],
\end{equation}
where $\xi$ satisfies Assumption~\ref{hp:xi}, $\bfu$ is chosen in the class of admissible controls defined in~\eqref{eq:admctlr_infinite},
%\begin{align}
%\label{eq:admctlr_infinite}
%  \cU_{\infty} \coloneqq
%	&\biggl\{\bfu = (u_s)_{s \geq 0} \text{ s.t. } \bfu \colon \Omega \times [0,\infty) \to [0,+\infty) \text{ is } \bbF\text{-progressively measurable,}
%	\\
%	&\qquad \bbE\left[\int_0^s u_r \, \dd r\right] < +\infty, \, \bbP\text{-a.s., } \forall s \geq 0, \text{ and } \bbE\biggl[\int_0^{+\infty} \de^{-\rho s} u_s^2 \, \dd s\biggr] < +\infty\biggr\},
%\end{align}
and $X^{\xi,\bfu}$ is the unique solution of SDE~\eqref{eq:SDE}.
We also introduce the value function corresponding to the optimization problem, namely,
%\begin{equation}
%\label{eq:V_infinite}
%V_{\infty,p}(t,\xi) \coloneqq \sup_{\bfu \in \cU_{t,\infty}} J_{\infty,p}(t,\xi; \bfu), \quad t \geq 0, \, \xi \in \dL^1((\Omega, \cF_t, \bbP); (0,+\infty)).
%\end{equation}
\begin{equation}
\label{eq:V_infinite}
V_{\infty,q}(\xi) \coloneqq \sup_{\bfu \in \cU_{\infty}} J_{\infty,q}(\xi, \bfu), \quad \xi \in \dL^1((\Omega, \cF_0, \bbP); (0,+\infty)).
\end{equation}

The next result is analogous to {Lemma}~\ref{prop:J_finite_properties}. The proof proceeds along the same lines and, thus, we omit it.
\begin{lemma}
\label{prop:J_infinite_properties}
Fix a random variable $\xi$ satisfying Assumption~\ref{hp:xi}, $\bfu \in \cU_{\infty}${, and $q \in \cQ_\infty$.}

Then, the functional $J_{\infty,q}$ defined in~\eqref{eq:J_infinite} is finite and verifies
\begin{equation}
\label{eq:J_infinite_rewrite}
J_{\infty,q}(\xi, \bfu) = \bbE[\xi] z^{(\infty,q)}_0 + \bbE\left[\int_0^{+\infty} \de^{-\rho s} \left\{ z^{(\infty,q)}_s u_s - \dfrac 12 u_s^2\right\} \dd s\right].
\end{equation}

Moreover, if $\xi, \xi^\prime$ both verify Assumption~\ref{hp:xi} and are such that $\bbE[\xi] = \bbE[\xi^\prime]$, then $J_{\infty,q}(\xi, \bfu) = J_{\infty,q}(\xi^\prime, \bfu)$, for any $\bfu \in \cU_{\infty}$.
\end{lemma}

\begin{remark}
\label{rem:V_average_infinite}
Also in this case (cf. Remark~\ref{rem:V_average_finite}) the last statement in {Lemma}~\ref{prop:J_infinite_properties} entails that the value function $V_{\infty,q}$ depends on the initial condition $\xi$ only through its average.
%More precisely, if $\xi, \xi^\prime$ both verify Assumption~\ref{hp:xi} and are such that $\bbE[\xi] = \bbE[\xi^\prime]$, then $V_{\infty,p}(t,\xi) =
%V_{\infty,p}(t,\xi^\prime)$, for any $t \geq 0$
\end{remark}

Thanks to {Lemma}~\ref{prop:J_infinite_properties}, also in the infinite time horizon case we can find the optimal control for problem~\eqref{eq:V_infinite} and the explicit form of the corresponding value function, as stated in the next {Proposition}. Its proof is omitted, being similar to that of {Proposition}~\ref{th:V_finite}.
\begin{proposition}
\label{th:V_infinite}
Fix a random variable $\xi$ satisfying Assumption~\ref{hp:xi} {and $q \in \cQ_\infty$.}

Then, $\widehat \bfu^{(\infty,q)} \coloneqq z^{(\infty,q)} \in \cU_{\infty}$ is an optimal control for problem~\eqref{eq:V_infinite}, which is deterministic and independent of $\xi$.

Moreover, $\widehat \bfu^{(\infty,q)}$ is essentially unique, i.e., if $\overline \bfu^{(\infty,q)} \in \cU_{\infty}$ is an optimal control for problem~\eqref{eq:V_infinite} different from $\widehat \bfu^{(\infty,q)}$, then
\begin{equation*}
\overline u^{(\infty,q)}_s = \widehat u^{(\infty,q)}_s, \quad \text{for $\bbP \otimes \mathrm{Leb}$-a.e. } (\omega,s) \in \Omega \times [0,+\infty).
\end{equation*}

Finally, the value function of the optimization problem admits the explicit expression
\begin{equation}
\label{eq:V_infinite_explicit}
V_{\infty,q}(\xi) = \bbE[\xi] z^{(\infty,q)}_0 + \dfrac{1}{2} \int_0^{+\infty} \de^{-\rho s} (z^{(\infty,q)}_s)^2 \, \dd s,
\end{equation}
and the optimally controlled state process $X^{\xi, \widehat \bfu^{(\infty,q)}}$ is given by
\begin{equation}
\label{eq:X_opt_ctrl_infinite}
X^{\xi, \widehat \bfu^{(\infty,q)}}_s = Y_s \xi + \int_0^s \dfrac{Y_s}{Y_r} z^{(\infty,q)}_r \, \dd r, \quad s \geq 0,
\end{equation}
where $Y$ is the process defined in~\eqref{eq:Y}.
\end{proposition}

\begin{remark}\label{rem:extension_t}
All the results of this section can be straightforwardly extended to the case where the optimization problem starts at a time $t > 0$. More precisely, setting the initial condition $X_t = x > 0$ in SDE~\eqref{eq:SDE}, we get that~\eqref{eq:X_expectation} becomes
\begin{equation*}
\bbE[X^{\xi,\bfu}_s] = \bbE[\xi] \de^{-\delta(s-t)} + \bbE\left[\int_t^s \de^{-\delta(s-r)} u_r \, \dd r\right], \quad s \in [t,T],
\end{equation*}
while the expressions of the functional $J_{T,q}$ and of the value function $V_{T,q}$ (now both dependent on $t$) given in~\eqref{eq:J_finite_rewrite} and~\eqref{eq:V_finite_explicit}, if $T < +\infty$, and in~\eqref{eq:J_infinite_rewrite} and~\eqref{eq:V_infinite_explicit}, if $T = +\infty$, become
\begin{gather*}
J_{T,q}(t,\xi, \bfu) = \bbE[\xi] \de^{-\rho t} z^{(T,q)}_t + \bbE\left[\int_t^T \de^{-\rho s} \left\{ z^{(T,q)}_s u_s - \dfrac 12 u_s^2\right\} \dd s\right], \\
V_{T,q}(t,\xi) = \bbE[\xi] \, \de^{-\rho t} z^{(T,q)}_t + \dfrac{1}{2} \int_t^T \de^{-\rho s} (z^{(T,q)}_s)^2 \, \dd s,
\end{gather*}
with optimal control $\widehat \bfu^{(T,q)} \coloneqq z^{(T,q)}$, where
\begin{equation*}
z^{(T,q)}_s \coloneqq \int_s^T \de^{-(\rho+\delta) (r-s)} q_r^{-\beta} \, \dd r, \quad s \in [t,T].
\end{equation*}
\end{remark}

%{\color{red}(AC: mettiamo qui un remark per dire che questi risultati si possono estendere al caso in cui il costo sia del tipo $au - bu^2$? Oppure non diciamo nulla, visto che il punto fisso pare non sia così scontato per $a \neq 0$?).  FG. Per me si possono fare entrambe le cose, ma ho una leggera preferenza per parlare del caso $a\neq 0$ magari solo per dire che è una possibile estensione.}

\section{Existence and uniqueness of equilibria}
\label{sec:fixed_point}

In this section we discuss the existence and the uniqueness of an equilibrium for the mean-field game introduced in Section~\ref{sec:model}.

As usual in mean-field games theory, looking for an equilibrium boils down to finding a fixed point of a suitable map. According to Definition~\ref{def:equilibrium}, in our case this map is $(q_s)_{s \in [0,T]} \mapsto \left(\bbE[X_s^{\xi, \widehat \bfu^{(T,q)}}]\right)_{s \in [0,T]}$, where $\widehat \bfu^{(T,q)}$ is the optimal control for the maximization problem studied in Section~\ref{sec:optpb}, whose expression is given either in {Proposition}~\ref{th:V_finite}, in the case $T < +\infty$, or in {Proposition}~\ref{th:V_infinite}, in the case $T = +\infty$.

The next result formalizes this fact, showing also that the fixed point map is precisely the solution map of an integral equation.
\begin{theorem}
\label{th:MF_IDE}
Let us fix a random variable $\xi$ satisfying Assumption~\ref{hp:xi}.

Consider the mean-field game introduced in Section~\ref{sec:model} in the finite time horizon case, i.e., $T < +\infty$. Then,
\begin{enumerate}[label = (\roman*)]
\item\label{th:MF_IDE_finite_if} If there exists an equilibrium $(\widehat \bfu, \widehat{q})$ in the sense of Definition~\ref{def:equilibrium}, such that {$\widehat q \in \cQ_T$,}
\begin{comment}
\begin{equation}
\label{eq:q_integrability_finite}
\widehat{q}^{-\beta} \in \dL^1((0,T] ; (0,+\infty)),
\end{equation}
\end{comment}
then $\widehat{q}$ is a solution to the integral equation
\begin{equation}\label{eq:IDEint_finite}
y_s =\de^{-\delta s} \bbE[\xi] +\int_0^s \de^{-\delta (s-r)} \int_r^T \de^{-(\rho+\delta)(u-r)} y_u^{-\beta} \, \dd u \, \dd r,  \quad s \in [0,T] \, .
\end{equation}
\item\label{th:MF_IDE_finite_onlyif} Vice versa, if there exist a unique solution {$\widehat{q} \in \cQ_T$} to~\eqref{eq:IDEint_finite}, then there exists a unique equilibrium $(\widehat \bfu, \widehat{q}) = (z^{(T,\widehat{q})},\widehat{q})$ of the mean-field game among all equilibria {$(\widetilde \bfu, \widetilde q) \in \cU_T \times \cQ_T$}, where $z^{(T,q)}$ is the function defined in~\eqref{eq:z_finite}.
\end{enumerate}
Consider, instead, the same mean-field game in the infinite time horizon case, i.e., $T = +\infty$. Then,
\begin{enumerate}[label = (\roman*)]\setcounter{enumi}{2}
\item\label{th:MF_IDE_infinite_if} If there exists an equilibrium $(\widehat \bfu, \widehat{q})$ in the sense of Definition~\ref{def:equilibrium}, such that {$\widehat q \in \cQ_\infty$},
\begin{comment}
\begin{equation}
\label{eq:q_integrability_infinite}
\widehat{q}^{-\beta} \in \dL^1_{\rho+\delta}((0,+\infty) ; (0,+\infty))
\quad \text{and} \quad
z^{(\infty,\widehat{q})} \text{ is bounded on } [0,+\infty) \, ,
\end{equation}
where $z^{(\infty,q)}$ is the function defined in~\eqref{eq:z_infinite},
\end{comment}
then $\widehat{q}$ is a solution to the integral equation
\begin{equation}\label{eq:IDEint_infinite}
y_s =\de^{-\delta s} \bbE[\xi] +\int_0^s \de^{-\delta (s-r)} \int_r^{+\infty} \de^{-(\rho+\delta)(u-r)} y_u^{-\beta} \, \dd u \, \dd r,  \quad s \geq 0 \, .
\end{equation}
\item\label{th:MF_IDE_infinite_onlyif} Vice versa, if there exist a unique solution {$\widehat{q} \in \cQ_\infty$} to~\eqref{eq:IDEint_infinite}, then there exists a unique equilibrium $(\widehat \bfu, \widehat{q}) = (z^{(\infty,\widehat{q})},\widehat{q})$ of the mean-field game among all equilibria {$(\widetilde \bfu, \widetilde q) \in \cU_\infty \times \cQ_\infty$, where $z^{(T,q)}$ is the function defined in~\eqref{eq:z_finite}.}
\end{enumerate}
\end{theorem}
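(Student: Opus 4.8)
The plan is to exploit the fact that Definition~\ref{def:equilibrium} decomposes into an optimality requirement~\ref{def:equilibrium:opt} and a consistency requirement~\ref{def:equilibrium:mean}, that Section~\ref{sec:optpb} has already reduced the optimality requirement to an \emph{explicit identification} of the admissible control, and that feeding this control into the expectation formula of Lemma~\ref{lem:X_finite_expectation} turns the consistency requirement into precisely the integral equation in the statement. I would treat the finite- and infinite-horizon cases in parallel: once one replaces $z^{(T,q)}$, $\cQ_T$, Lemma~\ref{prop:J_finite_properties}, Proposition~\ref{th:V_finite} by $z^{(\infty,q)}$, $\cQ_\infty$, Lemma~\ref{prop:J_infinite_properties}, Proposition~\ref{th:V_infinite}, the arguments are the same.

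For the first implication in each horizon, i.e.\ parts~\ref{th:MF_IDE_finite_if} and~\ref{th:MF_IDE_infinite_if}, let $(\widehat\bfu,\widehat q)$ be an equilibrium with $\widehat q\in\cQ_T$ (resp.\ $\cQ_\infty$). Condition~\ref{def:equilibrium:opt} of Definition~\ref{def:equilibrium} says that $\widehat\bfu$ maximizes $J_{T,\widehat q}$ over $\cU_T$ (resp.\ $\cU_\infty$); since $\widehat q$ lies in the relevant class, Proposition~\ref{th:V_finite} (resp.\ Proposition~\ref{th:V_infinite}) applies, and its essential-uniqueness clause yields $\widehat u_s=z^{(T,\widehat q)}_s$ (resp.\ $z^{(\infty,\widehat q)}_s$) for $\bbP\otimes\mathrm{Leb}$-a.e.\ $(\omega,s)$. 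In particular $\widehat\bfu$ agrees a.e.\ with a bounded deterministic process, so Lemma~\ref{lem:X_finite_expectation} applies (in the infinite-horizon case on each finite interval, which suffices since~\eqref{eq:X_expectation} is a pointwise-in-$s$ identity) and gives
\[
\bbE\big[X_s^{\xi,\widehat\bfu}\big]\;=\;\de^{-\delta s}\,\bbE[\xi]\;+\;\int_0^s \de^{-\delta(s-r)}\,z^{(T,\widehat q)}_r\,\dd r .
\]
Substituting the definition~\eqref{eq:z_finite} (resp.~\eqref{eq:z_infinite}) of the inner function and then imposing $\widehat q_s=\bbE[X_s^{\xi,\widehat\bfu}]$ from~\ref{def:equilibrium:mean} yields exactly~\eqref{eq:IDEint_finite} (resp.~\eqref{eq:IDEint_infinite}).

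For the converse, i.e.\ parts~\ref{th:MF_IDE_finite_onlyif} and~\ref{th:MF_IDE_infinite_onlyif}, assume the integral equation has a unique solution $\widehat q$ in $\cQ_T$ (resp.\ $\cQ_\infty$) and set $\widehat\bfu\coloneqq z^{(T,\widehat q)}$ (resp.\ $z^{(\infty,\widehat q)}$). By Proposition~\ref{th:V_finite} (resp.\ Proposition~\ref{th:V_infinite}) this $\widehat\bfu$ is admissible and optimal for $J_{T,\widehat q}$, so condition~\ref{def:equilibrium:opt} holds; and the right-hand side of the displayed identity above (valid by Lemma~\ref{lem:X_finite_expectation}) equals $\widehat q_s$ because $\widehat q$ solves the integral equation, so condition~\ref{def:equilibrium:mean} holds. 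Hence $(\widehat\bfu,\widehat q)$ is an equilibrium of the asserted form. For uniqueness, let $(\widetilde\bfu,\widetilde q)\in\cU_T\times\cQ_T$ (resp.\ $\cU_\infty\times\cQ_\infty$) be any equilibrium: by the first implication just proved, $\widetilde q$ solves the integral equation, whence $\widetilde q=\widehat q$ by the uniqueness hypothesis; then $\widetilde\bfu$ is optimal for $J_{T,\widehat q}$, so the essential-uniqueness clause of Proposition~\ref{th:V_finite} (resp.\ Proposition~\ref{th:V_infinite}) forces $\widetilde u_s=\widehat u_s$ for $\bbP\otimes\mathrm{Leb}$-a.e.\ $(\omega,s)$.

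I do not expect a genuine obstacle here: the statement is essentially a bookkeeping consequence of Section~\ref{sec:optpb} together with Lemma~\ref{lem:X_finite_expectation}. The only points deserving attention are to invoke membership of the relevant $q$ in $\cQ_T$ (resp.\ $\cQ_\infty$) consistently, so that Propositions~\ref{th:V_finite}/\ref{th:V_infinite} are genuinely applicable; to observe that in the infinite-horizon case Lemma~\ref{lem:X_finite_expectation} may still be used even though $\cU_\infty$ only imposes \emph{local} integrability of the control, because the relevant control $z^{(\infty,\widehat q)}$ is bounded by the definition of $\cQ_\infty$; and to keep in mind that uniqueness of the equilibrium control is, as in Propositions~\ref{th:V_finite}/\ref{th:V_infinite}, only up to $\bbP\otimes\mathrm{Leb}$-null sets.
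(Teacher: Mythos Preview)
Your proposal is correct and follows essentially the same approach as the paper's own proof: identify $\widehat\bfu$ with $z^{(T,\widehat q)}$ via the essential-uniqueness clause of Proposition~\ref{th:V_finite} (resp.\ Proposition~\ref{th:V_infinite}), feed this into the expectation formula~\eqref{eq:X_expectation} from Lemma~\ref{lem:X_finite_expectation}, and read off the integral equation from the consistency condition; the converse and uniqueness are handled by reversing these steps. Your additional remarks on the applicability of Lemma~\ref{lem:X_finite_expectation} in the infinite-horizon case and on the role of $\cQ_T$/$\cQ_\infty$ are accurate and make explicit points the paper treats more tersely.
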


\begin{proof}
We prove only the first two statements in the finite time horizon case, as the case $T = +\infty$ is analogous.

\medskip

\noindent \textit{\ref{th:MF_IDE_finite_if}.} Let $(\widehat \bfu, \widehat{q})$ be an equilibrium of the mean-field game, with $\widehat \bfu \in \cU_T$ and $\widehat{q} \colon [0,T] \to (0,+\infty)$, such that $\widehat{q}^{-\beta} \in \dL^1((0,T] ; (0,+\infty))$.
Since point~\ref{def:equilibrium:opt} of Definition~\ref{def:equilibrium} holds, from {Proposition}~\ref{th:V_finite} we have that $\widehat \bfu = z^{(T,\widehat{q})}$, for $\bbP \otimes \mathrm{Leb}$-a.e.  $(\omega,s) \in \Omega \times [0,T]$. Combining the expression of $z^{(T,q)}$, given in~\eqref{eq:z_finite}, and~\eqref{eq:X_expectation}, we get
\begin{align}
\bbE[X^{\xi,\widehat \bfu}_s] 
&= \bbE[\xi] \de^{-\delta s} + \bbE\left[\int_0^s \de^{-\delta(s-r)} \widehat u_r \, \dd r\right] = \bbE[\xi] \de^{-\delta s} + \bbE\left[\int_0^s \de^{-\delta(s-r)} z^{(T,\widehat{q})}_r \, \dd r\right] \notag
\\
&= \de^{-\delta s} \bbE[\xi] +\int_0^s \de^{-\delta (s-r)} \int_r^T \de^{-(\rho+\delta)(u-r)} \widehat{q}_u^{-\beta} \, \dd u \, \dd r, \quad s \in [0,T].  \label{eq:X_expect_optimal}  
\end{align}
Therefore, by point~\ref{def:equilibrium:mean} of Definition~\ref{def:equilibrium}, we get that $\widehat{q}$ is a solution to~\eqref{eq:IDEint_finite}.

\medskip

\noindent \textit{\ref{th:MF_IDE_finite_onlyif}.} Let $\widehat{q}$ be the unique solution to~\eqref{eq:IDEint_finite} such that $\widehat{q}^{-\beta} \in \dL^1((0,T] ; (0,+\infty))$.
Define $\widehat \bfu \coloneqq z^{(T, \widehat{q})}$ according to~\eqref{eq:z_finite}. Then, by {Proposition}~\ref{th:V_finite}, we know that $\widehat\bfu$ is optimal for the problem of maximizing $J_{T, \widehat{q}}$, i.e., that point~\ref{def:equilibrium:opt} of Definition~\ref{def:equilibrium} holds.
We also have that equation~\eqref{eq:X_expect_optimal} above is verified, and hence, since we assumed that $\widehat{q}$ solves~\eqref{eq:IDEint_finite}, we get that $\widehat{q}_s = \bbE[X^{\xi, \widehat \bfu}_s]$, for all $s \in [0,T]$, i.e., that point~\ref{def:equilibrium:mean} of Definition~\ref{def:equilibrium} also holds.
This means that $(\widehat \bfu, \widehat{q})$ is an equilibrium of the mean-field game.

If $(\widetilde \bfu, \widetilde q)$ is any other equilibrium of the mean-field game, such that $\widetilde q^{-\beta} \in \dL^1((0,T] ; (0,+\infty))$, then necessarily $\widetilde \bfu = z^{(T, \widetilde q)}$, for $\bbP \otimes \mathrm{Leb}$-a.e.  $(\omega,s) \in \Omega \times [0,T]$, by {Proposition}~\ref{th:V_finite}. Therefore, we can repeat the same argument of the proof of point~\ref{th:MF_IDE_finite_if} to get that $\widetilde q$ is a solution to~\eqref{eq:IDEint_finite}, and hence that $(\widetilde \bfu, \widetilde q) = (\widehat \bfu, \widehat{q})$, since we assumed that~\eqref{eq:IDEint_finite} has a unique solution.
\end{proof}

Thanks to Theorem~\ref{th:MF_IDE} we can reduce our search for a unique equilibrium of our mean-field game to proving existence and uniqueness of a solution to the integral equation~\eqref{eq:IDEint_finite}, in the finite time horizon case, or~\eqref{eq:IDEint_infinite}, in the infinite time horizon case. This, in turn, is equivalent to showing existence and uniqueness of a solution $y \colon [0,T] \to (0,+\infty)$ to the integro-differential equation
\begin{align}
\label{eq:IDE}
\begin{dcases}
\dfrac{\dd}{\dd s} y_s = -\delta y_s + \int_s^T \de^{-(\rho+\delta)(u-s)} y_u^{-\beta} \, \dd u, &s \in [0,T], \\
y_0 = x \, .
\end{dcases}
\end{align}
for any initial datum $x > 0$.

Indeed, this equivalence follows from the next {definition} and from {Lemma}~\ref{prop:equiv_mild_classica} below, whose proof is omitted, being a standard result{, see, e.g.,~\citep[Chapter~1]{lakshmikantham1995:integraleq}.}

%We introduce the following two solution concepts for equation~\eqref{eq:IDE}.
\begin{comment}
\begin{definition}\label{def:sol_classica}
A function $y\in \dC^1([0,T];(0,+\infty))$ that satisfies~\eqref{eq:IDE} for all $s \in [0,T]$ is called a \emph{classical solution} to~\eqref{eq:IDE}.
\end{definition}

\begin{definition}\label{def:sol_mild}
A function $y\in \dC([0,T];(0,+\infty))$ that satifies the integral equation
\begin{equation*}
y_s =\de^{-\delta s}x +\int_0^s \de^{-\delta (s-r)} \int_r^T \de^{-(\rho+\delta)(u-r)} y_u^{-\beta} \, \dd u \, \dd r,  \quad s \in [0,T],
\end{equation*}
is called a \emph{mild solution}  to~\eqref{eq:IDE}.
\end{definition}
\end{comment}

\begin{definition}\label{def:sol}
\mbox{}
\begin{enumerate}[label = (\roman*)]
    \item\label{def:sol:classica} A function $y\in \dC^1([0,T];(0,+\infty))$ that satisfies~\eqref{eq:IDE} for all $s \in [0,T]$ is called a \emph{classical solution} to~\eqref{eq:IDE}.
    \item\label{def:sol:mild} A function $y\in \dC([0,T];(0,+\infty))$ that satifies the integral equation
\begin{equation*}
y_s =\de^{-\delta s}x +\int_0^s \de^{-\delta (s-r)} \int_r^T \de^{-(\rho+\delta)(u-r)} y_u^{-\beta} \, \dd u \, \dd r,  \quad s \in [0,T],
\end{equation*}
is called a \emph{mild solution} to~\eqref{eq:IDE}.
\end{enumerate}
\end{definition}

%The two solution concepts are equivalent, as stated in the following proposition. We omit its proof, being a standard result. Therefore, from now on we will just speak of a solution (in either sense) to~\eqref{eq:IDE}, unless otherwise specified.
\begin{lemma}\label{prop:equiv_mild_classica}
Any classical solution $y\in \dC^1([0,T];(0,+\infty))$ to~\eqref{eq:IDE} is also a mild solution. Vice versa, any mild solution $y\in \dC([0,T];(0,+\infty))$ to~\eqref{eq:IDE} is continuously differentiable in $[0,T]$ and also a classical solution.
\end{lemma}

\begin{remark}
From now on, we will just speak of a solution (in either sense) to~\eqref{eq:IDE}, unless otherwise specified.
\end{remark}
%
% Note that, by solution of the above equation \eqref{eq:IDE}, we mean a classical solution, i.e. a function 
%$y\in C^1([t,T];(0,+\infty))$ such that \eqref{eq:IDE} is satisfied for all $s \in [t,T]$ in classical sense. We also note that, in this case
%the notion of classical solution is equivalent to the one of mild solution, i.e. solution in  $C([t,T];(0,+\infty))$ of the integral equation
%\begin{equation}
%\label{eq:IDEint}
%y_s =\de^{-\delta (s-t)}x +\int_t^s \de^{-\delta (s-r)} \int_r^T \de^{-(\rho+\delta)(u-r)} y_u^{-\beta} \, \dd u,  \ \ \ \ s \in [t,T],
%\end{equation}
%in the sense that any mild solution must be also a classical solution and viceversa.
%From now on we will use the word "solution", except cases when we need to specify more.}

%{\color{red}FG: Qui sopra ho messo la def di sol. Per ora nel testo ma preferire mettere una definizione precisa da richiamare e magari una proposizione ovvia
%sull'equivalenza con la soluzione mild.
%
%(AC: va bene come sopra?)}

It is convenient to introduce a further integro-differential equation, namely
\begin{equation}
\label{eq:zIDE}
\begin{dcases}
\dfrac{\dd}{\dd s} z_s = \de^{(\rho+2\delta)s} \int_s^T \de^{-(\rho+\delta-\delta\beta)u} z_u^{-\beta} \, \dd u, &s \in [0,T], \\
z_0 = x\, ,
\end{dcases}
\end{equation}
and the corresponding integral equation
\begin{equation}
\label{eq:zIDEint}
z_s = x + \int_0^s \de^{(\rho+2\delta)r} \int_r^T \de^{-(\rho+\delta-\delta\beta)u} z_u^{-\beta} \, \dd u \, \dd r, \quad s \in [0,T] \, ,
\end{equation}
which we consider for any initial datum $x > 0$.

The notions of classical and mild solution to~\eqref{eq:zIDE} are analogous to those given in {Definition~\ref{def:sol}.} Also in this case, a result similar to {Lemma}~\ref{prop:equiv_mild_classica} shows that these two solution concepts are equivalent. 

The following result, whose proof is omitted being based on routine computations,  shows that solutions to~\eqref{eq:IDE} can be expressed in terms of those of~\eqref{eq:zIDE} and vice versa. Thus, we can study either of these two equations, depending on convenience.
\begin{lemma}\label{prop:IDE_transformation}
If $y \colon [0,T] \to (0,+\infty)$ is a solution to~\eqref{eq:IDE}, then $z_s \coloneqq \de^{\delta s} y_s$, $s \in [0,T]$ is a solution to~\eqref{eq:zIDE}.
Vice versa, if $z \colon [0,T] \to (0,+\infty)$ is a solution to~\eqref{eq:zIDE}, then $y_s \coloneqq \de^{-\delta s} z_s$, $s \in [0,T]$ is a solution to~\eqref{eq:IDE}.
\end{lemma}

\begin{remark}
Note that equations~\eqref{eq:IDE} and~\eqref{eq:zIDE} do not fall into the standard theory of integro-differential equations (see, e.g.~\citep{gripenberg1990:integraleq, lakshmikantham1995:integraleq}), as we have an initial-value problem with the integral being backward in time.
\end{remark}

Now, we proceed to study existence and uniqueness of a solution to~\eqref{eq:IDE} or, equivalently, \eqref{eq:zIDE}. As in the previous section, we are going to divide our analysis into two subsections, the first devoted to the finite time horizon case, the second one to the infinite time horizon case.
{In particular, to prove the well-posedness of the integro-differential equation~\eqref{eq:IDE} in the finite time horizon case, we are going to rely on a fixed point argument to show existence of a solution, and on an \textit{ad-hoc} argument based on the structure of the equation to show its uniqueness. This result will be used in the infinite time horizon case to show existence of a solution to~\eqref{eq:IDE}, by constructing a candidate one as a sequence of solutions to the integro-differential equation with finite time horizon. Then, we prove uniqueness of the solution and its convergence to a steady state using arguments that rely on the analysis of its properties.}

\subsection{The finite time horizon case}
\label{sec:fixed_point:finite}

Let us fix $T < +\infty$ and $x > 0$. 
We observe, first, that solutions of \eqref{eq:zIDE} satisfy suitable a priori estimates.
\begin{proposition}
\label{pr:boundz}
Let $z\in \dC([0,T];(0,+\infty))$ be a solution to~\eqref{eq:zIDE}. Then, for all $s \in [0,T]$,
\begin{equation}
\label{eq:alphastima}
z_s \geq \underline \alpha^{x}_s , \qquad and \qquad
z_s\leq \overline \alpha^{x}_s,
\end{equation}
where
\begin{equation}
\label{eq:alpha_lowerbar}
\underline \alpha^{x}_s \coloneqq x , \quad s \in [0,T] \, ,
\end{equation}
and, for any $s \in [0,T]$,
\begin{equation}
\label{eq:alpha_upperbar}
\overline \alpha^{x}_s \coloneqq
\begin{dcases}
x - \dfrac{x^{-\beta}}{\rho+\delta-\delta\beta} \left\{\dfrac{1-\de^{\delta(1+\beta)s}}{\delta(1+\beta)}-\de^{-(\rho+\delta-\delta\beta)T}\dfrac{1-\de^{(\rho+2\delta)s}}{\rho+2\delta}\right\}, &\text{if } \beta \neq 1 + \dfrac \rho \delta, \\
x - \dfrac{x^{-\beta}}{(\rho+2\delta)^2}\left\{[1+(\rho+2\delta)(T-s)]\de^{(\rho+2\delta)s} - [1+(\rho+2\delta)T]\right\}, &\text{if } \beta = 1 + \dfrac \rho \delta.
\end{dcases}
\end{equation}
\end{proposition}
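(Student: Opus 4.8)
The plan is to read both inequalities off the equation itself. Since $z\in\dC([0,T];(0,+\infty))$ solves \eqref{eq:zIDE}, by the equivalence of classical and mild solutions it also satisfies the integral form \eqref{eq:zIDEint}, and I would work with whichever of the two forms is more convenient at each step. The lower bound is essentially free from positivity, while the upper bound is obtained by inserting the lower bound back into the integral term and evaluating explicitly.

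For the lower bound: because $z$ is continuous and strictly positive, the inner integrand $\de^{-(\rho+\delta-\delta\beta)u}z_u^{-\beta}$ is strictly positive, hence the right-hand side of \eqref{eq:zIDE} is nonnegative (equivalently, the integrand in \eqref{eq:zIDEint} is nonnegative). Thus $s\mapsto z_s$ is nondecreasing on $[0,T]$, and in particular $z_s\geq z_0=x=\underline\alpha^x_s$ for all $s\in[0,T]$, which is the first inequality in \eqref{eq:alphastima}.

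For the upper bound I would exploit what was just proved: for every $u\in[0,T]$ we have $z_u\geq x>0$, so $z_u^{-\beta}\leq x^{-\beta}$ since $\beta>0$. Substituting this bound into \eqref{eq:zIDEint} gives
\begin{equation*}
z_s \;\leq\; x + x^{-\beta}\int_0^s \de^{(\rho+2\delta)r}\left(\int_r^T \de^{-(\rho+\delta-\delta\beta)u}\,\dd u\right)\dd r, \qquad s\in[0,T],
\end{equation*}
and it remains only to compute the now fully explicit double integral. If $\beta\neq 1+\rho/\delta$, then $\rho+\delta-\delta\beta\neq 0$ and the inner integral equals $(\rho+\delta-\delta\beta)^{-1}\big(\de^{-(\rho+\delta-\delta\beta)r}-\de^{-(\rho+\delta-\delta\beta)T}\big)$; inserting this and using the identity $(\rho+2\delta)-(\rho+\delta-\delta\beta)=\delta(1+\beta)$, the outer integral splits into two elementary exponential integrals, and after rewriting $\de^{\delta(1+\beta)s}-1=-(1-\de^{\delta(1+\beta)s})$ and similarly for the other exponent one arrives at the first branch of \eqref{eq:alpha_upperbar}. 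If $\beta= 1+\rho/\delta$, the exponent $\rho+\delta-\delta\beta$ vanishes, the inner integral collapses to $T-r$, and a single integration by parts of $\int_0^s \de^{(\rho+2\delta)r}(T-r)\,\dd r$ produces the second branch of \eqref{eq:alpha_upperbar}; as a consistency check, this expression should agree with the $\beta\to 1+\rho/\delta$ limit of the first branch, recovered by a Taylor expansion in the parameter $\rho+\delta-\delta\beta$ near $0$.

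I do not anticipate any genuine obstacle: the proposition is merely an a priori estimate for an assumed solution, so no fixed-point, compactness, or extra regularity input enters. The only points requiring care are the case distinction $\beta\neq 1+\rho/\delta$ versus $\beta=1+\rho/\delta$ in the inner integral, and keeping track of signs while rearranging the explicit primitives into the stated closed forms of $\underline\alpha^x$ and $\overline\alpha^x$ (in particular, verifying that the resulting $\overline\alpha^x$ is indeed $\geq x$, consistently with $z_s\geq x$).
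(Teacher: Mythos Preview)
Your proposal is correct and follows essentially the same approach as the paper: obtain the lower bound $z_s\geq x$ from positivity of the integrand in \eqref{eq:zIDEint}, then feed the resulting inequality $z_u^{-\beta}\leq x^{-\beta}$ back into \eqref{eq:zIDEint} and compute the explicit double integral. The paper's proof is terser (it simply says ``compute the integrals''), while you spell out the case distinction and the algebra, but the argument is the same.
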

\begin{proof}
Since $z_s > 0$, for all $s \in [0,T]$,  the right hand side of~\eqref{eq:zIDEint} is positive, and hence, again for all $s \in [0,T]$,
\begin{equation}
\label{eq:alpha_lowerbarstima}
z_s \geq x, \qquad and \qquad 
z_s^{-\beta} \leq x^{-\beta},
\end{equation}
which provides us with the lower bound. To get the upper bound, it is enough to substitute the second inequality in~\eqref{eq:alpha_lowerbarstima} into the right hand side of~\eqref{eq:zIDEint} and compute the integrals.
%Note that $\underline \alpha^{t,x}_t = \overline\alpha^{t,x}_t = \de^{\delta t} x$. Since $\beta \leq 1 + \frac \rho \delta$, we have that $\frac{\dd}{\dd s}\underline \alpha^{t,x}_s = 0 \leq \frac{\dd}{\dd s}\overline \alpha^{t,x}_s$, for all $s \in [t,T]$; hence, under this assumption, we get that $\underline \alpha^{t,x}_s \leq \overline \alpha^{t,x}_s$, for all $s \in [t,T]$.
\end{proof}

Given the result above our goal will be to show that~\eqref{eq:zIDE} admits a unique solution in the (nonempty) set
\begin{equation}
\label{eq:zIDE_solution_set}
\cC_{x} \coloneqq \{f \in \dC([0,T];(0,+\infty)) \colon \underline \alpha^{x}_s \leq f_s \leq \overline \alpha^{x}_s, \, \forall s \in [0,T]\}.
\end{equation}

\begin{theorem}
\label{th:fixed_point_finite}
%If $\beta \leq 1 + \frac \rho \delta$, then, 
For each fixed $x > 0$, there exists a unique solution to the integro-differential equation~\eqref{eq:zIDE} in the set $\cC_{x}$ defined in~\eqref{eq:zIDE_solution_set}, which is also twice continuously differentiable. Consequently, the integro-differential equation~\eqref{eq:IDE} admits
a unique solution $y\in
\dC^2([0,T];(0,+\infty))$  satisfying the bounds
\begin{equation}
\label{eq:IDE_solution_bounds}
\de^{-\delta s} x \leq y_s \leq \de^{-\delta s} \overline \alpha^{x}_s, \qquad s \in [0,T].
\end{equation}
\end{theorem}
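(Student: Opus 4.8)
The plan is to prove existence, uniqueness and $\dC^2$-regularity directly for equation~\eqref{eq:zIDE}; the corresponding assertions for~\eqref{eq:IDE}, including the bounds~\eqref{eq:IDE_solution_bounds}, will then follow at once from Lemma~\ref{prop:IDE_transformation} and Proposition~\ref{pr:boundz}. For existence I would run a fixed-point argument. Define $\cT$ on $\cC_{x}$ by
\[
(\cT f)_s := x + \int_0^s \de^{(\rho+2\delta)r}\int_r^T \de^{-(\rho+\delta-\delta\beta)u} f_u^{-\beta}\,\dd u\,\dd r ,\qquad s\in[0,T],
\]
so that fixed points of $\cT$ in $\cC_{x}$ are exactly the mild solutions of~\eqref{eq:zIDE} lying in $\cC_{x}$. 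One checks the hypotheses of Schauder's theorem: $\cC_{x}$ is a nonempty (it contains $\underline\alpha^{x}\equiv x$, since the computation in Proposition~\ref{pr:boundz} gives $\overline\alpha^{x}_s=x+x^{-\beta}\int_0^s\de^{(\rho+2\delta)r}\int_r^T\de^{-(\rho+\delta-\delta\beta)u}\,\dd u\,\dd r\ge x$), closed, bounded and convex subset of the Banach space $\dC([0,T];\R)$; if $f\in\cC_{x}$ then $f_u\ge x$ forces $0\le f_u^{-\beta}\le x^{-\beta}$, whence $x\le(\cT f)_s\le\overline\alpha^{x}_s$, i.e.\ $\cT(\cC_{x})\subseteq\cC_{x}$; $\cT$ is (Lipschitz-)continuous on $\cC_{x}$ because $z\mapsto z^{-\beta}$ is Lipschitz on $[x,\|\overline\alpha^{x}\|_\infty]$; and $\cT(\cC_{x})$ is relatively compact by the Arzel\`a--Ascoli theorem, the functions $\cT f$ being uniformly bounded and uniformly Lipschitz (with derivative $\de^{(\rho+2\delta)s}\int_s^T\de^{-(\rho+\delta-\delta\beta)u}f_u^{-\beta}\,\dd u$, bounded by $x^{-\beta}\de^{(\rho+2\delta)T}\int_0^T\de^{-(\rho+\delta-\delta\beta)u}\,\dd u$). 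Schauder then provides a fixed point $z\in\cC_{x}$, i.e.\ a mild and, by the analogue of Lemma~\ref{prop:equiv_mild_classica} for~\eqref{eq:zIDE}, a classical ($\dC^1$) solution; since then $s\mapsto\int_s^T\de^{-(\rho+\delta-\delta\beta)u}z_u^{-\beta}\,\dd u$ is $\dC^1$, the right-hand side of~\eqref{eq:zIDE} is $\dC^1$ and hence $z\in\dC^2([0,T];(0,+\infty))$.

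For uniqueness I would exploit the structure of the equation, since the backward-in-time integral in~\eqref{eq:zIDE} prevents a direct Gr\"onwall or contraction estimate run from $s=0$ from closing for arbitrary $T$. Setting $G_s:=\int_s^T\de^{-(\rho+\delta-\delta\beta)u}z_u^{-\beta}\,\dd u$, a classical solution $z\in\cC_{x}$ of~\eqref{eq:zIDE} is equivalent to a solution on $[0,T]$ of the forward--backward ODE system
\[
z'_s=\de^{(\rho+2\delta)s}G_s,\qquad G'_s=-\de^{-(\rho+\delta-\delta\beta)s}z_s^{-\beta},\qquad z_0=x,\quad G_T=0 .
\]
Given two solutions $(z^1,G^1)$ and $(z^2,G^2)$, put $a:=z^1-z^2$ and $b:=G^1-G^2$, so that $a_0=0$, $a'_s=\de^{(\rho+2\delta)s}b_s$, and $b'_s=\de^{-(\rho+\delta-\delta\beta)s}\bigl((z^2_s)^{-\beta}-(z^1_s)^{-\beta}\bigr)$ has the \emph{same sign as $a_s$}, because $z\mapsto z^{-\beta}$ is strictly decreasing. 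If $b_0>0$, an invariant-region argument gives $a_s>0$ and $b_s>0$ for all $s\in(0,T]$: if $s_1>0$ were the first time at which $a_{s_1}=0$ or $b_{s_1}=0$, then on $(0,s_1)$ both $a$ and $b$ are positive, so $b$ is strictly increasing there (forcing $b_{s_1}>0$) and $a'_{s_1}=\de^{(\rho+2\delta)s_1}b_{s_1}>0$ (forcing $a$ not to return to $0$ at $s_1$), a contradiction. Hence $b_T>0$, contradicting $G^1_T=G^2_T=0$; by symmetry $b_0<0$ is impossible, so $G^1_0=G^2_0$. Since the vector field $(z,G)\mapsto\bigl(\de^{(\rho+2\delta)s}G,\,-\de^{-(\rho+\delta-\delta\beta)s}z^{-\beta}\bigr)$ is locally Lipschitz on $\{z>0\}$ and, by Proposition~\ref{pr:boundz}, both solutions remain in $\{z\ge x\}$, the Picard--Lindel\"of uniqueness theorem yields $z^1\equiv z^2$ on $[0,T]$.

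Finally, by Lemma~\ref{prop:IDE_transformation} the map $y\mapsto(\de^{\delta s}y_s)_{s\in[0,T]}$ is a bijection between solutions of~\eqref{eq:IDE} and solutions of~\eqref{eq:zIDE}, and every positive solution of~\eqref{eq:zIDE} lies in $\cC_{x}$ by Proposition~\ref{pr:boundz}; hence the existence, uniqueness and $\dC^2$-regularity established above transfer verbatim to~\eqref{eq:IDE}, and the inequalities $x\le z_s\le\overline\alpha^{x}_s$ become $\de^{-\delta s}x\le y_s\le\de^{-\delta s}\overline\alpha^{x}_s$, which is~\eqref{eq:IDE_solution_bounds}. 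The only genuinely delicate step is uniqueness: the backward-in-time integral rules out the standard contraction/Gr\"onwall scheme, and one must recognise the forward--backward ODE structure and run the monotone comparison argument driven by the decreasing nonlinearity $z\mapsto z^{-\beta}$.
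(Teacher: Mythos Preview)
Your proof is correct. The existence argument via Schauder's theorem is essentially identical to the paper's (the paper calls the operator $\Gamma$ rather than $\cT$ and checks the same hypotheses in the same way, including the Arzel\`a--Ascoli compactness step). For uniqueness, both arguments rest on the strict monotonicity of $z\mapsto z^{-\beta}$, but organise it differently: the paper performs the time reversal $w_s:=z_{T-s}$, which turns the backward integral into a forward one, and then splits according to whether the terminal values $z_T$, $\tilde z_T$ coincide (if not, the difference $w-\tilde w$ is shown to be strictly increasing, contradicting $w_T=\tilde w_T=x$; if so, a forward Gr\"onwall estimate finishes). Your forward--backward ODE reformulation $(z,G)$ with $G_s=\int_s^T\de^{-(\rho+\delta-\delta\beta)u}z_u^{-\beta}\,\dd u$ achieves the same thing without reversing time, comparing instead the initial values $G^1_0,G^2_0$ (equivalently, $z'_0$) and invoking Picard--Lindel\"of in place of Gr\"onwall once these agree. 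The two routes are equivalent in spirit and length; yours has the minor advantage of making explicit the forward--backward structure that the paper exploits again in the infinite-horizon analysis via the second-order ODE~\eqref{eq:zIDE_infinitebis}--\eqref{eq:zIDE_infinitetris}.
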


\begin{proof}
Fix $x > 0$. Showing that~\eqref{eq:zIDE} admits a unique solution is equivalent to proving that the operator $\Gamma$, defined on $\dC([0,T];(0,+\infty))$ and given by
\begin{equation}
\label{eq:Gamma}
\Gamma(f)_s \coloneqq x + \int_0^s \de^{(\rho+2\delta)r} \int_r^T \de^{-(\rho+\delta-\delta\beta)u} f_u^{-\beta} \, \dd u \, \dd r, \quad s \in [0,T],
\end{equation}
admits a unique fixed point in $\cC_{x}$.

It is easy to verify that $\cC_{x}$ is a closed, bounded, and convex subset of $\dC([0,T];(0,+\infty))$ (which is endowed with the usual sup-norm). Moreover, $\Gamma$ maps $\cC_{x}$ into itself. Indeed, from~\eqref{eq:Gamma} we readily verify that, for all $f \in \cC_{x}$ and all $s \in [0,T]$, $\Gamma(f)_s \geq \underline{\alpha}^x_s$, since $f$ is a positive function; moreover, noting that for all $f,g \in \cC_{x}$
\begin{equation*}
f_s \geq g_s \quad \Longrightarrow \quad \Gamma(f)_s \leq \Gamma(g)_s, \quad s \in [0,T],
\end{equation*}
we immediately check that $\Gamma(f)_s \leq \Gamma(\underline {\alpha}^x)_s = \overline{\alpha}^x_s$.
Finally, $\Gamma$ is compact in $\cC_{x}$ as a consequence of the Ascoli-Arzelà theorem. Indeed, any sequence in $\cC_{x}$ is equi-bounded, thanks to the definition of $\cC_{x}$, and equi-continuous since, for all $f \in \cC_{x}$, $\Gamma(f)$ is differentiable and
\begin{equation*}
\sup_{f \in \cC} \sup_{s \in [0,T]} \abs{\Gamma(f)_s^\prime} = \sup_{f \in \cC} \sup_{s \in [0,T]} \left\{ \de^{(\rho+2\delta)s} \int_s^T \de^{-(\rho+\delta-\delta\beta)u} f_u^{-\beta} \, \dd u\right\} \leq \sup_{s \in [0,T]}  {\left|\frac{\dd }{\dd s}\overline{\alpha}^{x}_s\right|} < +\infty.
\end{equation*}
{In fact, since $f_s^{-\beta} \leq (\underline{\alpha}^x_s)^{-\beta} = x^{-\beta}$, for all $s \in [0,T]$, we have that, if $\beta \neq 1 + \frac\rho\delta$,
\begin{align*}
    0 
    &\leq \de^{(\rho+2\delta)s} \int_s^T \de^{-(\rho+\delta-\delta\beta)u} f_u^{-\beta} \, \dd u
    \\
    &\leq
    \dfrac{x^{-\beta}}{\rho+\delta-\delta\beta} \left\{\de^{\delta(1+\beta)s} - \de^{-(\rho+\delta-\delta\beta)T}\de^{(\rho+2\delta)s}\right\} = \frac{\dd }{\dd s}\overline{\alpha}^{x}_s, \quad s \in [0,T],
\end{align*}
while if $\beta = 1 + \frac\rho\delta$,
\begin{equation*}
    0 \leq \de^{(\rho+2\delta)s} \int_s^T \de^{-(\rho+\delta-\delta\beta)u} f_u^{-\beta} \, \dd u
    \leq
    x^{-\beta}(T-s)\de^{(\rho+2\delta)s} = \left|\frac{\dd }{\dd s}\overline{\alpha}^{x}_s\right|, \quad s \in [0,T].
\end{equation*}%
}%
Therefore, by Schauder's fixed point theorem the map $\Gamma$ has a fixed point $z \in \cC_{x}$, i.e., $z$ verifies~\eqref{eq:zIDEint}. Hence, $z$ is a solution of equation~\eqref{eq:zIDE}. 
%The fact that such solution is $\dC^2$ simply follows
%by taking the integral equation satisfied by the fixed point and differentiating twice.
The fact that $z \in \dC^2([0,T];(0,+\infty))$ simply follows by differentiating twice~\eqref{eq:zIDEint}.
%We also note that the operator $\Gamma_{t,x}$ is monotone decreasing, meaning that for any $y, z \in \dC([t,T];(0,+\infty))$ such that $y_s \geq z_s$, for all $s \in [t,T]$, we have $\Gamma_{t,x}(y)_s \leq \Gamma_{t,x}(z)_s$, for all $s \in [t,T]$.
%Therefore, if $y \in \dC([t,T];(0,+\infty))$ is a solution to~\eqref{eq:IDE_integral_finite}, i.e., a fixed point of $\Gamma_{t,x}$, then $y_s = \Gamma_{t,x}(y)_s \leq \Gamma_{t,x}(\alpha)_s$, for all $s \in [t,T]$.

To show uniqueness, assume that two different solutions $z, \widetilde z \in \cC_{x}$ of~\eqref{eq:zIDE} exist.
It is convenient to write the integro-differential equation satisfied by $w_s \coloneqq z_{T-s}$ and $\widetilde w_s \coloneqq \widetilde z_{T-s}$, $s \in [0,T]$,
\begin{equation}
\label{eq:wIDE_time_reverse}
\begin{dcases}
\dfrac{\dd}{\dd s} w_s = -\de^{\delta(1+\beta)(T-s)} \int_0^s \de^{-(\rho+\delta-\delta\beta)(s-r)} w_r^{-\beta} \, \dd r, &s \in [0,T], \\
w_{T} = x\, .
\end{dcases}
\end{equation}
From this equation we deduce that
\begin{equation}
\label{eq:wIDE_difference}
w^\prime_s - \widetilde w^\prime_s = -\de^{\delta(1+\beta)(T-s)} \int_0^s \de^{-(\rho+\delta-\delta\beta)(s-r)} \left[w_r^{-\beta} - \widetilde w_r^{-\beta}\right] \dd r, \quad s \in [0,T] \, .
\end{equation}
The following two cases may occur:

\smallskip

\noindent\textbf{Case 1.} Consider $w_0 \neq \widetilde w_0$ and assume, without loss of generality, that $w_0 > \widetilde w_0$. Let us define
\begin{equation*}
s^\star \coloneqq \inf\{u > 0 \colon w_u = \widetilde w_u\} \, .
\end{equation*}
For each fixed $s \in [0, s^\star)$, we have that $w_s > \widetilde w_s$ and, hence, $w_r^{-\beta} - \widetilde w_r^{-\beta} < 0$, for all $r \in [0,s]$; thus, we deduce from~\eqref{eq:wIDE_difference} that $w^\prime_s - \widetilde w^\prime_s > 0$, i.e., that the distance between the two solutions increases in time. Therefore, $s^\star = +\infty$, but this contradicts the fact that $w_{T} = \widetilde w_{T}$.

\smallskip

\noindent\textbf{Case 2.} Consider $w_0 = \widetilde w_0$ and note that both $w$ and $\widetilde w$ are bounded below by the constant $x$, since $z, \widetilde z \in \cC_{x}$. Observe, also, that the function $w \mapsto w^{-\beta}$ is Lipschitz in $[x, +\infty)$, with Lipschitz constant $L \coloneqq x^{-\beta-1}$. Therefore, integrating~\eqref{eq:wIDE_difference} on $[0,s]$ we get
\begin{equation*}
w_s - \widetilde w_s = -\int_0^s \de^{\delta(1+\beta)(T-u)} \int_0^u \de^{-(\rho+\delta-\delta\beta)(u-r)} \left[w_r^{-\beta} - \widetilde w_r^{-\beta}\right] \dd r \, \dd u, \quad s \in [0,T] \, ,
\end{equation*}
whence
\begin{equation*}
\abs{w_s - \widetilde w_s} \leq L \int_0^s \left\{\de^{\delta(1+\beta)(T-u)} \int_0^u \de^{-(\rho+\delta-\delta\beta)(u-r)} \, \dd r \right\} \sup_{r \in [0,u]} \abs{w_r- \widetilde w_r} \, \dd u, \quad s \in [0,T] \, .
\end{equation*}
Since $\displaystyle s \mapsto \sup_{r \in [0,s]} \abs{w_r - \widetilde w_r}$ is bounded on $[0,T]$, applying Gronwall's lemma we get that $w_s =\widetilde w_s$ for all $s \in [0, T]$, which contradicts the assumption that $w$ and $\widetilde w$ are different.

Therefore, equation~\eqref{eq:zIDE} admits a unique solution in $\cC_{x}$. 
Finally, from {Lemma}~\ref{prop:IDE_transformation}, we immediately deduce that also~\eqref{eq:IDE} has a unique solution in $\dC^2([0,T];(0+\infty))$ satisfying the bounds in~\eqref{eq:IDE_solution_bounds}.
\end{proof}

Thanks to the result above, we can establish the following.

\begin{theorem}
\label{prop:MF_exist_uniq_finite}
For each fixed random variable $\xi$ satisfying Assumption~\ref{hp:xi}, there exists a unique equilibrium $(\widehat \bfu, \widehat{q})$ for the mean-field game with finite time horizon, among all equilibria {$(\widetilde \bfu, \widetilde q) \in \cU_T \times \cQ_T$.}

More precisely, $\widehat \bfu = z^{(T,\widehat{q})}$, where $z^{(T,\widehat{q})}$ is the function defined in~\eqref{eq:z_finite}, and $\widehat{q}$ is the unique solution to~\eqref{eq:IDE}, with initial condition $x = \bbE[\xi]$.
\end{theorem}

\begin{proof}
Let $\widehat{q}$ be the unique solution to~\eqref{eq:IDE}, with $x = \bbE[\xi]$. Since $\widehat{q}$ satisfies the bounds~\eqref{eq:IDE_solution_bounds} and since the functions $s \mapsto \de^{-\delta s}x$ and $s \mapsto \de^{-\delta s} \overline \alpha_s^x$ are continuous on $[0,T]$, hence therein bounded, we deduce that also $\widehat{q}$ is bounded on $[0,T]$. More precisely,
\begin{equation}
0 < m \leq \widehat{q}_s \leq M, \quad \forall s \in [0,T], \quad \text{with } m \coloneqq \de^{-\delta T}x \, \text{ and } M \coloneqq \sup_{s \in [0,T]} \de^{-\delta s} \overline \alpha_s^x \, .
\end{equation}

Therefore, {$\widehat{q} \in \cQ_T$} and applying Theorem~\ref{th:MF_IDE}-\ref{th:MF_IDE_finite_onlyif} we get the result. 
\end{proof}

\subsection{The infinite time horizon case}
\label{sec:fixed_point:infinite}
We discuss now the infinite time horizon case, i.e., we fix $T = +\infty$.
Here we work under the assumption $\beta < 1 + \frac \rho \delta$,
which will be used to guarantee convergence of the integral appearing in~\eqref{eq:zIDE}.

%
%Also in this case, we consider the integro-differential equation satisfied by $z_s \coloneqq \de^{\delta s} y_s$, $s \in [t,\infty)$, whenever $y \colon [t,+\infty) \to (0,+\infty)$ is a solution to~\eqref{eq:IDE}, i.e.,
%\begin{equation}
%\label{eq:zIDE_infinite}
%\begin{dcases}
%\dfrac{\dd}{\dd s} z_s = \de^{(\rho+2\delta)s} \int_s^{+\infty} \de^{-(\rho+\delta-\delta\beta)u} z_u^{-\beta} \, \dd u, &s \geq t, \\
%z_t = \de^{\delta t} x\, .
%\end{dcases}
%\end{equation}
A reasoning similar to the one used to prove Proposition~\ref{pr:boundz} shows that the following holds.
\begin{proposition}
\label{pr:boundzinfty}
Let $z\in \dC([0,+\infty);(0,+\infty))$ be a solution to~\eqref{eq:zIDE}, with $T = +\infty$, and assume that $\beta < 1 + \frac \rho \delta$. Then, for all $s \geq 0$,
\begin{equation}
\label{eq:alphastimainfty}
z_s \geq \underline \alpha^{x, \infty}_s , \qquad and \qquad
z_s\leq \overline \alpha^{x, \infty}_s.
\end{equation}
where
\begin{equation}
\label{eq:alpha_lowerbarinfty}
\underline \alpha^{x, \infty}_s \coloneqq x , \quad s \geq 0 \, ,    
\end{equation}
and
\begin{equation}
\label{eq:alpha_upperbar_infinite}
\overline \alpha^{x,\infty}_s \coloneqq
x - \dfrac{x^{-\beta}}{\rho+\delta-\delta\beta} \left\{\dfrac{1-\de^{\delta(1+\beta) s}}{\delta(1+\beta)}\right\}, \quad s \geq 0.
\end{equation}
\end{proposition}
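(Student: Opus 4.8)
The plan is to follow the proof of Proposition~\ref{pr:boundz} almost verbatim, working with the integral formulation~\eqref{eq:zIDEint} in the case $T=+\infty$, that is
\begin{equation*}
z_s = x + \int_0^s \de^{(\rho+2\delta)r} \int_r^{+\infty} \de^{-(\rho+\delta-\delta\beta)u} z_u^{-\beta} \, \dd u \, \dd r, \quad s \geq 0 \, .
\end{equation*}
The standing hypothesis $\beta < 1 + \frac{\rho}{\delta}$ is equivalent to $\rho+\delta-\delta\beta > 0$, and this positivity is precisely what guarantees convergence of the improper inner integral once an a priori lower bound on $z$ is in hand; it is the only point where the infinite-horizon argument genuinely departs from the finite-horizon one.

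First I would establish the lower bound. Since $z\in\dC([0,+\infty);(0,+\infty))$ is, by definition, a (mild) solution of~\eqref{eq:zIDE}, the right-hand side of the displayed equation is a well-defined finite quantity; in particular the inner integral must be finite for every $r\geq 0$ (otherwise the outer integral would diverge for $s>0$), and the double integral is nonnegative because its integrand $\de^{(\rho+2\delta)r}\de^{-(\rho+\delta-\delta\beta)u}z_u^{-\beta}$ is nonnegative. Hence $z_s\geq x$ for every $s\geq 0$, which is the bound $z_s\geq\underline\alpha^{x,\infty}_s$, and consequently $z_u^{-\beta}\leq x^{-\beta}$ for all $u\geq 0$.

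Next I would derive the upper bound by feeding the estimate $z_u^{-\beta}\leq x^{-\beta}$ back into the integral equation. The inner integral is then dominated by $x^{-\beta}\int_r^{+\infty}\de^{-(\rho+\delta-\delta\beta)u}\,\dd u = \frac{x^{-\beta}}{\rho+\delta-\delta\beta}\,\de^{-(\rho+\delta-\delta\beta)r}$, which is finite thanks to $\rho+\delta-\delta\beta>0$. Inserting this into the outer integral and using the identity $(\rho+2\delta)-(\rho+\delta-\delta\beta)=\delta(1+\beta)$, the right-hand side collapses to $x+\frac{x^{-\beta}}{\rho+\delta-\delta\beta}\int_0^s\de^{\delta(1+\beta)r}\,\dd r$; a one-line integration produces exactly $\overline\alpha^{x,\infty}_s$ as given in~\eqref{eq:alpha_upperbar_infinite}.

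The main (and essentially only) obstacle is the convergence of the improper inner integral, which is resolved by first proving $z\geq x$ and then invoking $\beta<1+\frac{\rho}{\delta}$; after that, everything reduces to the same elementary monotonicity-plus-explicit-integration argument used for Proposition~\ref{pr:boundz}.
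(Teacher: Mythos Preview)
Your proposal is correct and follows essentially the same argument as the paper, which simply states that ``a reasoning similar to the one used to prove Proposition~\ref{pr:boundz}'' applies. You have correctly identified the only new ingredient in the infinite-horizon case, namely that the hypothesis $\beta<1+\frac{\rho}{\delta}$ (equivalently $\rho+\delta-\delta\beta>0$) is needed to ensure convergence of the inner improper integral once the lower bound $z\geq x$ is in hand.
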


{As the notation suggests, the function $\overline \alpha^{x,\infty}$ given in~\eqref{eq:alpha_upperbar_infinite} is the pointwise limit, as $T \to +\infty$, of the sequence $\{\overline \alpha^{x,T}\}_{T > 0}$, where for each $T > 0$ the function $\overline \alpha^{x,T}$ has the expression given in~\eqref{eq:alpha_upperbar}.}

We aim at showing that~\eqref{eq:zIDE} admits a unique solution in the (nonempty) set
\begin{equation}
\label{eq:zIDE_solution_set_infinite}
\cC_{x,\infty} \coloneqq \{f \in \dC([0,+\infty);(0,+\infty)) \colon \underline \alpha^{x, \infty}_s \leq f_s \leq \overline \alpha^{x,\infty}_s, \, \forall s \geq 0\}.
\end{equation}
Note, in particular, that for any $f \in \cC_{x,\infty}$ we have that
\begin{equation*}
\int_s^{+\infty} \de^{-(\rho+\delta-\delta\beta)u} f_u^{-\beta} \, \dd u \leq
\int_s^{+\infty} \de^{-(\rho+\delta-\delta\beta)u} x^{-\beta} \, \dd u 
= \frac{x^{-\beta}}{\rho+\delta-\delta\beta} \de^{-(\rho+\delta-\delta\beta)s}
<+\infty, \quad \forall s \geq 0,
\end{equation*}
as $f$ is bounded below by the constant $x$ and we assumed $\beta < 1 + \frac \rho \delta$.

\begin{theorem}
\label{th:fixed_point_infinite}
Let $T = +\infty$ and assume that $\beta < 1 + \frac \rho \delta$. Then, for each fixed $x > 0$, there exists a unique solution to the integro-differential equation~\eqref{eq:zIDE} in the set $\cC_{x,\infty}$ defined in~\eqref{eq:zIDE_solution_set_infinite}, which is also twice continuously differentiable. Moreover, the integro-differential equation~\eqref{eq:IDE} admits
a unique solution $y\in \dC^2([0,+\infty);(0,+\infty))$  satisfying the bounds
\begin{equation}
\label{eq:IDE_solution_bounds_infinite}
\de^{-\delta s} x \leq y_s \leq \de^{-\delta s} \left[x - \dfrac{x^{-\beta}}{\delta(1+\beta)(\rho+\delta-\delta\beta)} \right] + \dfrac{\de^{\delta\beta s} x^{-\beta}}{\delta(1+\beta)(\rho+\delta-\delta\beta)}, \quad s \geq 0.
\end{equation}
%In addition, letting $y_\infty \coloneqq (\delta(\rho+\delta))^{-\frac{1}{\beta+1}}$, the function $y_s \coloneqq y_\infty$, $s \geq 0$, is the unique constant solution to~\eqref{eq:IDE} satisfying~\eqref{eq:IDE_solution_bounds_infinite}, while $y^{t,x}$, for $x\neq y_{\infty}$, is strictly monotone and such that
%$$
%\lim_{s \to +\infty} y_{s}^{t,x} = y_\infty \, .
%$$
Finally, if $x = y_\infty \coloneqq (\delta(\rho+\delta))^{-\frac{1}{\beta+1}}$, then the function $y_s = y_\infty$, for all $s \geq 0$, is the unique constant solution to~\eqref{eq:IDE}. 
%Moreover $y_\infty$ satisfies~\eqref{eq:IDE_solution_bounds_infinite}. {\color{blue} (AC: quest'ultima affermazione mi sembra una ripetizione. Abbiamo appena detto due righe sopra che tutte le soluzioni soddisfano i bound!) FG. Sono d'accordo con AC, lo avevo lasciato perchè c'era scritto prima....}
If, instead, $x \neq y_\infty$, then the corresponding solution $y$ to~\eqref{eq:IDE} is strictly monotone and such that
\begin{equation*}
\lim_{s \to +\infty} y_s = y_\infty \, .
\end{equation*}
\end{theorem}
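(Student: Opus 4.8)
By Lemma~\ref{prop:IDE_transformation} the bounds~\eqref{eq:IDE_solution_bounds_infinite} correspond, under $y_s=\de^{-\delta s}z_s$, exactly to $z\in\cC_{x,\infty}$, so it suffices to treat~\eqref{eq:zIDE} in $\cC_{x,\infty}$ and transport everything to~\eqref{eq:IDE}. For \emph{existence} I would pass to the limit along the finite-horizon solutions of Theorem~\ref{th:fixed_point_finite}: for $n\in\N$ let $z^{(n)}$ be the unique solution of~\eqref{eq:zIDE} with $T=n$ in $\cC_x$. Since $\rho+\delta-\delta\beta>0$ one has $\overline\alpha^{x,n}\to\overline\alpha^{x,\infty}$ locally uniformly, so on every compact $[0,S]$ the bounds $x\le z^{(n)}\le\overline\alpha^{x,n}$ are uniform in $n\ge S$, and differentiating~\eqref{eq:zIDEint} with $z^{(n)}\ge x$ gives a locally-uniform Lipschitz bound. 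Ascoli--Arzel\`a plus a diagonal argument yields $z^{(n)}\to z$ locally uniformly with $z\ge x$; passing to the limit in~\eqref{eq:zIDEint}, the only delicate point being the inner tail $\int_S^n\to\int_S^{+\infty}$, dominated uniformly in $n$ by $x^{-\beta}\int_S^{+\infty}\de^{-(\rho+\delta-\delta\beta)u}\,\dd u$, shows that $z$ is a mild solution of~\eqref{eq:zIDE}, hence (by the analogue of Lemma~\ref{prop:equiv_mild_classica}) a classical solution of class $\dC^2$; in the limit $z\in\cC_{x,\infty}$, and $y_s\coloneqq\de^{-\delta s}z_s$ solves~\eqref{eq:IDE} and obeys~\eqref{eq:IDE_solution_bounds_infinite}.

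\textbf{Planar reformulation and the constant solution.} For any solution $y$ of~\eqref{eq:IDE} obeying~\eqref{eq:IDE_solution_bounds_infinite}, set $g_s\coloneqq\int_s^{+\infty}\de^{-(\rho+\delta)(u-s)}y_u^{-\beta}\,\dd u$; this is finite (because $y_u\ge\de^{-\delta u}x$ and $\beta<1+\rho/\delta$), positive, of class $\dC^1$, and $(y,g)$ solves the \emph{autonomous} planar system $y'=-\delta y+g$, $g'=(\rho+\delta)g-y^{-\beta}$ with $y_0=x$; conversely, any such pair with $y>0$ gives back a solution of~\eqref{eq:IDE}. The only equilibrium with $y,g>0$ is $(y_\infty,g_\infty)$, $g_\infty=\delta y_\infty$, $y_\infty^{\beta+1}=(\delta(\rho+\delta))^{-1}$. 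In particular $y\equiv c$ solves~\eqref{eq:IDE} iff $\delta c=c^{-\beta}/(\rho+\delta)$, i.e. iff $c=y_\infty$, forcing $x=y_\infty$; and $y\equiv y_\infty$ indeed solves~\eqref{eq:IDE} with datum $y_\infty$. This is the constant-solution assertion.

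\textbf{Uniqueness.} Let $y,\widetilde y$ be solutions of~\eqref{eq:IDE} obeying~\eqref{eq:IDE_solution_bounds_infinite}, with associated $g,\widetilde g$; then $y_0=\widetilde y_0=x$. If $g_0=\widetilde g_0$, local Lipschitz continuity of the planar field on $\{y>0\}$ forces $y\equiv\widetilde y$. Otherwise, say $g_0>\widetilde g_0$; with $w=y-\widetilde y$, $v=g-\widetilde g$ (so $w_0=0$, $v_0>0$), a sign-chasing on $w'=-\delta w+v$ and $v'=(\rho+\delta)v-(y^{-\beta}-\widetilde y^{-\beta})$ — using that $w>0$ makes $\widetilde y^{-\beta}-y^{-\beta}>0$ — shows $w>0$ and $v>0$ on all of $(0,+\infty)$, whence $v'>(\rho+\delta)v$, so $v_s>v_0\de^{(\rho+\delta)s}$, and integrating $w'=-\delta w+v$ gives $w_s>\frac{v_0}{\rho+2\delta}(\de^{(\rho+\delta)s}-\de^{-\delta s})$. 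But~\eqref{eq:IDE_solution_bounds_infinite} yields $w_s\le y_s=O(\de^{\delta\beta s})$ with $\delta\beta<\rho+\delta$, a contradiction. Hence $g_0=\widetilde g_0$ and $y\equiv\widetilde y$.

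\textbf{Monotonicity, convergence, and the main obstacle.} Let $y$ be the solution, $x\ne y_\infty$. At any zero $s_0$ of $y'$ one has $g_{s_0}=\delta y_{s_0}$, hence $y''(s_0)=g'(s_0)=\delta(\rho+\delta)y_{s_0}-y_{s_0}^{-\beta}$, which is positive, negative, or zero according as $y_{s_0}>y_\infty$, $<y_\infty$, or $=y_\infty$; moreover $y_{s_0}=y_\infty$ forces $(y,g)(s_0)=(y_\infty,g_\infty)$, i.e. $y\equiv y_\infty$. Thus every critical point of $y$ in $\{y>y_\infty\}$ is a strict local minimum and every one in $\{y<y_\infty\}$ a strict local maximum. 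Take $x>y_\infty$. First, $y$ cannot attain $y_\infty$: a first crossing time $\tau$ would force $y$ decreasing on $[0,\tau)$ (no local max there) and, after crossing strictly downward, decreasing on $(\tau,+\infty)$ (no local min in $\{y<y_\infty\}$), so $y_s\downarrow L\in[0,y_\infty)$; then $L=0$ drives $g_s\to-\infty$, while $0<L<y_\infty$ gives $g_s\to L^{-\beta}/(\rho+\delta)$ and $y'_s\to0$, hence $\delta L=L^{-\beta}/(\rho+\delta)$, i.e. $L=y_\infty$ — both absurd. So $y>y_\infty$ throughout; then $y'$ has no zero (a zero would be a strict local minimum, at most one can occur, and the increasing tail it would create leads, via the same limit computation and the exclusion of $L=+\infty$ just below, to a contradiction), so $y'$ has constant sign; it cannot be positive ($y_s\to L$ with $L<+\infty$ would give $L=y_\infty$, contradicting $y_s\ge x>y_\infty$, and $L=+\infty$ makes $g_s\to0$, hence $y'_s<0$ eventually), so $y$ is strictly decreasing and $y_s\downarrow y_\infty$. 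The case $x<y_\infty$ is symmetric, giving $y$ strictly increasing with $y_s\uparrow y_\infty$; Lemma~\ref{prop:IDE_transformation} transports all this to~\eqref{eq:zIDE}. The main obstacle is exactly this last step: while existence, the ODE reformulation, and uniqueness are comparatively routine (Steps~1--3, the latter also crucially using $\beta<1+\rho/\delta$), proving monotone convergence forces one to rule out \emph{by hand} every way the unique planar trajectory could misbehave — crossing $y_\infty$, escaping to $0$ (where $g$ turns negative) or to $+\infty$ (where $g\to0$ contradicts $y$ increasing) — by combining the sign of $y''$ at critical points, the asymptotics of $g$, and the a priori bound $y_s=O(\de^{\delta\beta s})$ with $\delta\beta<\rho+\delta$.
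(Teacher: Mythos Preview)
Your proof is correct, and it takes a genuinely different route from the paper's on all three fronts.

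\textbf{Existence.} The paper explicitly says it cannot use Ascoli--Arzel\`a on the infinite horizon and instead works in the weighted space $\dL^1_{\rho+2\delta}$ via Fr\'echet--Kolmogorov, verifying equi-integrability of the sequence $\{z^{(n)}\}$ and then upgrading the $\dL^1_{\rho+2\delta}$ limit to a classical solution through Sobolev arguments. Your local Ascoli--Arzel\`a plus diagonal extraction is more elementary and, since you only need local uniform convergence together with the uniformly dominated tail $x^{-\beta}\int_S^{+\infty}\de^{-(\rho+\delta-\delta\beta)u}\,\dd u$ to pass to the limit in~\eqref{eq:zIDEint}, it works without the weighted-space machinery. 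The paper's approach does buy, as a by-product, convergence in $\dL^1_{\rho+2\delta}$, which it later exploits in Corollary~\ref{cor:convergence_equilibrium}.

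\textbf{Uniqueness and convergence.} The paper never writes the autonomous planar system $(y,g)$; it instead differentiates~\eqref{eq:zIDE} to obtain a second-order ODE for $z$, studies the sensitivity $\widehat z_s(\zeta)=\partial_\zeta z_s^{x,\zeta}$ with respect to the shooting parameter $\zeta=z'_0$, and shows that $\widehat z_s(\zeta)$ grows like $\de^{(\rho+2\delta)s}$, forcing exactly one admissible $\zeta$. For convergence it differentiates~\eqref{eq:IDE} twice to get $y''=\rho y'+(\rho+\delta)\delta y-y^{-\beta}$ and argues via the signs of $y'$, $y''$, $y'''$ in succession. Your phase-plane reduction to $y'=-\delta y+g$, $g'=(\rho+\delta)g-y^{-\beta}$ packages the same information more transparently: the uniqueness argument (divergence of $v=g-\widetilde g$ at rate $\rho+\delta>\delta\beta$) is the first-order analogue of the paper's shooting estimate, and the classification of critical points via $y''(s_0)=(\rho+\delta)\delta y_{s_0}-y_{s_0}^{-\beta}$ replaces the paper's separate analyses of $y'$, $y''$, $y'''$. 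One small slip: for $L=0$ you write ``drives $g_s\to-\infty$'', but $g_s>0$ by construction; what actually happens is $g_s\to+\infty$ (since $y_u^{-\beta}\to+\infty$), hence $y'_s=-\delta y_s+g_s\to+\infty$, contradicting $y$ decreasing. The conclusion is unaffected.
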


\begin{proof}
We divide the proof as follows. First, we show existence and uniqueness of a solution to~\eqref{eq:zIDE} in the set $\cC_{x,\infty}$, for each fixed $x > 0$. 
As in the finite time horizon case, the fact that such solution is twice continuously differentiable follows by differentiation of~\eqref{eq:zIDEint}.
Then, it immediately follows from {Lemma}~\ref{prop:IDE_transformation} that~\eqref{eq:IDE} admits a unique solution $y \in \dC^2([0,+\infty);(0,+\infty))$ satisfying the bounds given in~\eqref{eq:IDE_solution_bounds_infinite}.
%Then, recalling that $z_s \coloneqq \de^{\delta s} y_s$, $s \in [t,\infty)$, it immediately follows that~\eqref{eq:IDE} admits a unique solution $y^{t,x} \in \dC([t,+\infty);(0,+\infty))$ satisfying the bounds given in~\eqref{eq:IDE_solution_bounds_infinite}.
Finally, we will show that the constant function equal to $y_{\infty}$ is the unique constant solution to~\eqref{eq:IDE}, %satisfying~\eqref{eq:IDE_solution_bounds_infinite}, 
and that the solutions starting at $x \neq y_\infty$ are monotone in time and converge to $y_\infty$.

\medskip

\textit{Existence.}
Fix $x >0$, and consider the family of solutions $\cZ \coloneqq \{z^{(n)}\}_{n \geq 1}$, where $z^{(n)}$ is the solution to the finite horizon problems with terminal time $T = n$, i.e.,
\begin{equation}\label{eq:zIDE_n}
\begin{dcases}
\dfrac{\dd}{\dd s} z^{(n)}_s = \de^{(\rho+2\delta)s} \int_s^n \de^{-(\rho+\delta-\delta\beta)u} \bigl(z_u^{(n)}\bigr)^{-\beta} \, \dd u, &s \in [0, n], \\
z_0 = x\, .
\end{dcases}
\end{equation}
We extend each of them by continuity taking them to be constant over $(n,+\infty)$, so that $z^{(n)} \in \cC_{x,\infty}$, as $\overline \alpha_s^{x} \leq \overline \alpha_s^{x,\infty}$, for all $s \in [0,n]$.
Differently from the finite horizon case, here we cannot directly apply the Ascoli-Arzelà theorem, as the infinite horizon does not allow to get the same equicontinuity estimates as in the proof of Theorem~\ref{th:fixed_point_finite}. Hence, use a weaker compactness criterion based on the Fréchet-Kolmogorov theorem.

Let us define the set
\begin{equation*}
\dL^{1}_{\rho+2\delta}([0,\infty);\R) \coloneqq \left\{f \colon [0,\infty)\to \R \, \colon \, \int_{0}^{\infty}\de^{-(\rho+2\delta)s} \abs{f_s} \, \dd s < +\infty \right\} \, ,
\end{equation*}
endowed with the usual weighted norm $\norm{\cdot}_{1,\rho+2\delta}$.
Clearly $\cZ \subset \dL^{1}_{\rho+2\delta}([0,\infty);\R)$.

We observe that, given the bounds in~\eqref{eq:zIDE_solution_set_infinite}, the family $\cZ$ is equibounded. We want to show that it is also equi-integrable in $\dL^{1}_{\rho+2\delta}$. For any $s \geq 0$ and $h > 0$ we have that
\begin{equation*}
z^{(n)}_{s+h} - z^{(n)}_s = \ind_{[0,n-h]}(s) \int_s^{s+h} \bigl(z^{(n)}_r\bigr)^\prime \, \dd r + \ind_{[n-h,n]}(s) \int_s^n \bigl(z^{(n)}_r\bigr)^\prime \, \dd r.
\end{equation*}
Let $K \coloneqq \frac{x^{-\beta}}{\rho+\delta-\delta\beta}$. Using the fact that $0 < \bigl(z^{(n)}_s\bigr)^\prime \leq K\de^{\delta(1+\beta)s}$, for all $s \geq 0$, and all $n \geq 1$, we get that, for any $h > 0$ small enough,
\begin{align*}
\allowdisplaybreaks
&\phantom{\leq} \norm{z^{(n)}_{\cdot+h} - z^{(n)}_{\cdot}}_{1,\rho+2\delta} = \int_0^{+\infty} \de^{-(\rho+2\delta) s} \abs{z^{(n)}_{s+h} - z^{(n)}_s} \, \dd s
\\
&\leq \int_{0}^{n-h} \de^{-(\rho+2\delta) s} \int_s^{s+h} \bigl(z^{(n)}_r\bigr)^\prime \, \dd r \, \dd s + \int_{n-h}^n \de^{-(\rho+2\delta) s} \int_s^n \bigl(z^{(n)}_r\bigr)^\prime \, \dd r
\\
&\leq \dfrac{K(\de^{\delta(1+\beta)h}-1)}{\delta(1+\beta)} \int_{0}^{n-h} \de^{-(\rho+\delta-\delta\beta) s} \, \dd s
\\
&\qquad + \dfrac{K \de^{\delta(1+\beta)n}}{\delta(1+\beta)} \int_{n-h}^n \de^{-(\rho+2\delta)s} \, \dd s \underbrace{- \dfrac{K}{\delta(1+\beta)} \int_{n-h}^n \de^{-(\rho+\delta-\delta\beta)s} \, \dd s}_{\leq 0}
\\
&\leq \dfrac{K(\de^{\delta(1+\beta)h}-1)}{\delta(1+\beta)(\rho+\delta-\delta\beta)} [1 \underbrace{- \de^{-(\rho+\delta-\delta\beta) (n-h)}}_{\leq 0}] + \dfrac{K \de^{-(\rho+\delta-\delta\beta)n}}{\delta(1+\beta)(\rho+2\delta)}[\de^{(\rho+2\delta)h}-1]
%- \dfrac{K \de^{-(\rho+\delta-\delta\beta)n}}{\delta(1+\beta)(\rho+\delta-\delta\beta)}[\de^{(\rho+\delta-\delta\beta)h}-1]]
\\
&\leq \dfrac{K(\de^{\delta(1+\beta)h}-1)}{\delta(1+\beta)(\rho+\delta-\delta\beta)} + \dfrac{K}{\delta(1+\beta)(\rho+2\delta)}[\de^{(\rho+2\delta)h}-1], \quad \forall n \geq 1 \, ,
\end{align*}
{where in the last inequality we used the condition $\beta < 1+\frac\rho\delta$.}
Therefore, we obtain that
\begin{equation*}
\lim_{h\to 0^{+}} \sup_{n \geq 1} \norm{z^{(n)}_{\cdot+h} - z^{(n)}_{\cdot}}_{1,\rho+2\delta} = 0,
\end{equation*}
i.e., that the family $\cZ$ is equi-integrable in
$\dL^{1}_{\rho+2\delta}$.

By the Frechet-Kolmogorov theorem, it follows that $\cZ$ is relatively compact, so we can extract a subsequence $\{z^{(n)}\}_{n \geq 1}\subset \cZ$, still labeled (with an abuse of notation) by $\{z^{(n)}\}_{n \geq 1}$, such that $z^{(n)} \longrightarrow z^{(\infty)}$
%\red{for some $z^{(\infty)}\in \dL^{1}_{\rho+2\delta}$}, 
with respect to $\norm{\cdot}_{1,\rho+2\delta}$, as $n \to \infty$. Then, we can extract a sub-subsequence, still labeled (with an abuse of notation) by $\{z^{(n)}\}_{n \geq 1}$, such that $z^{(n)}_s \longrightarrow z^{(\infty)}_s$, for almost every $s \geq 0$, as $n \to \infty$. 
Clearly, the limit $z^{(\infty)}$ belongs to $\dL^{1}_{\rho+2\delta}$ and, since $z^{(n)} \in \cC_{x,\infty}$, for any $n \geq 1$, we have that $\underline{\alpha}_s^{x,\infty} \leq z^{(\infty)}_s \leq \overline \alpha_s^{x,\infty}$, for almost every $s \geq 0$, where $\underline\alpha^{x,\infty}$ and $\overline\alpha^{x,\infty}$ are the functions defined in~\eqref{eq:alpha_lowerbarinfty} and~\eqref{eq:alpha_upperbar_infinite}. Now we use \eqref{eq:zIDE} to get that also the derivative 
$\{\left(z^{(n)}\right)^\prime\}$ of the last subsequence $\{z^{(n)}\}$ converges a.e. to the function
\begin{equation*}
w^{(\infty)}_s \coloneqq \de^{(\rho+2\delta)s}\int_s^{+\infty} \de^{-(\rho+\delta-\delta\beta)u}\left(z^{(\infty)}_u\right)^{-\beta} \, \dd u, \quad s \geq 0 \, ,
\end{equation*}
which belongs to $\dL^{1}_{\rho+2\delta}$. By standard computations, it follows that $\{\left(z^{(n)}\right)^\prime\} \subset \dL^1_{\rho+2\delta}$ and that 
\begin{equation*}
0 < \left(z^{(n)}_s\right)^\prime \leq \dfrac{x^{-\beta}}{\rho+\delta-\delta\beta} \de^{\delta(1+\beta)s} \eqqcolon g_s, \quad \forall s \geq 0, \, \forall n \geq 1,
\end{equation*}
with $g \in \dL^1_{\rho+2\delta}$. Therefore, the sequence of  derivatives 
$\{\left(z^{(n)}\right)^\prime\}$ 
%of the last subsequence $\%{z^{(n)}\}$
converges in $\dL^1_{\rho+2\delta}$ to $w^{(\infty)}$. 
We deduce that the  subsequence $\{z^{(n)}\}$ converges in the weighted Sobolev space $W^{1,1}_{\rho+2\delta}$ (cf.~\citep[Chapter~8, Remark~4]{brezis:functionalanalysis}). By completeness of this space,
we get that $w^{(\infty)}_s=(z^{(\infty)}_s)^\prime$, for a.e. $s \geq 0$. Moreover, using the same argument as in the proof of~\citep[Theorem~8.2]{brezis:functionalanalysis}, we get that there exists a function $\overline z^{(\infty)} \in \dC([0,+\infty);(0,+\infty))$ such that $\overline z^{(\infty)}_s = z^{(\infty)}_s$, for almost every $s \geq 0$, and 
\begin{equation*}
    \overline z^{(\infty)}_s = x + \int_0^s (z^{(\infty)}_u)^\prime \, \dd u = x +\int_0^s w^{(\infty)}_u \, \dd u, \quad \forall s \geq 0 \, .
\end{equation*}
Substituting the expression of $w^{(\infty)}$ in the previous equality we get that
\begin{equation*}
    \overline z^{(\infty)}_s = x + \int_0^s \de^{(\rho+2\delta)r}\int_r^{+\infty} \de^{-(\rho+\delta-\delta\beta)u}\left(\overline z^{(\infty)}_u\right)^{-\beta} \, \dd u \, \dd r \quad s \geq 0,
\end{equation*}
that is, $\overline z^{(\infty)} \in \cC_{x,\infty}$ and solves~\eqref{eq:zIDE}.
%{\color{red}Since $z^{(\infty)} \in \dL^{1}_{\rho+2\delta}$, the function $s \mapsto w^\infty_s$ is continuous and therefore
%$$z^{(\infty)}_{s} = x + \int_0^{s} w^\infty_u \dd u, \quad s \geq 0,$$
%is continuously differentiable on $[0,+\infty)$, and hence it belongs to $\cC_{x,\infty}$ and solves~\eqref{eq:zIDE}.}

\medskip

\textit{Uniqueness.}
As we observed at the beginning of the proof, any solution $z$ to~\eqref{eq:zIDE} is (at least) twice differentiable.
Differentiating~\eqref{eq:zIDE} with respect to the time variable and denoting by $'$ and $''$ first- and second-order derivatives with respect to this variable, we have that $z$ verifies
\begin{equation}
\label{eq:zIDE_infinitebis}
\begin{dcases}
z''_s = (\rho+2\delta)z'_{s}- \de^{\delta (1+\beta)s}z_{s}^{-\beta}, &s \geq 0, \\
z_0 = x \, .
\end{dcases}
\end{equation}
The equation above is a second-order ODE with locally Lipschitz coefficients over $(s,z)\in [0,+\infty)\times (0,+\infty)$, to which we can associate the following initial value problem
\begin{equation}
\label{eq:zIDE_infinitetris}
\begin{dcases}
z''_s = (\rho+2\delta)z'_{s}- \de^{\delta (1+\beta)s}z_{s}^{-\beta}, &s \geq 0, \\
z_0 = x \, , \\
z'_0 =\zeta > 0 \, .
\end{dcases}
\end{equation}
Note that we only need to consider $\zeta > 0$, since solutions to~\eqref{eq:zIDE} are such that their first derivative is strictly positive for all $s \geq 0$.
For each fixed $x > 0$ and $\zeta > 0$, \eqref{eq:zIDE_infinitetris} has a unique solution $z^{x,\zeta}$ on the maximal interval $[0,\tau^{*}(x,\zeta))$, with $\tau^{*}(x,\zeta) \leq +\infty$.

Since any solution $z$ to~\eqref{eq:zIDE} also verifies~\eqref{eq:zIDE_infinitebis}, it also satisfies~\eqref{eq:zIDE_infinitetris} for some $\overline \zeta > 0$. Therefore, if we show that for each given $x > 0$, there exists a unique $\zeta > 0$ such that~\eqref{eq:zIDE_infinitetris} has a unique global solution $z^{x,\zeta} \in \mathcal{C}_{x,\infty}$, then necessarily $\zeta= \overline \zeta$ and $z^{x,\zeta}$ must also be the unique solution to~\eqref{eq:zIDE}.
The idea is, thus, to study, for each fixed $x > 0$, the dependence of the solution $z^{x,\zeta}$ to~\eqref{eq:zIDE_infinitetris} on $\zeta > 0$.

%Clearly, all the possible solutions to~\eqref{eq:zIDE_infinite} belonging to the class $\mathcal{C}_{t,x,\infty}$ must be solutions to  \eqref{eq:zIDE_infinitebis} for a given $\xi\in\R$. So, we explore the cardinality of solutions to
%
%for varying $\xi\in\R$, that are global and belong to $\mathcal{C}_{t,x,\infty}$. We will show that there is at most one of such solutions.

Let us fix $x > 0$. Standard results (see, e.g.,~\citep[Theorem~7,5, Chapter~1]{coddingtonlevinson1955:ode}) ensure that the solution $z^{x,\zeta}$ to~\eqref{eq:zIDE_infinitetris} and its first- and second-order derivatives with respect to the time variable are differentiable with respect to $\zeta$ and that the order of differentiation can be exchanged.

Therefore, defining $\widehat z_s({\zeta}) \coloneqq \frac{\partial}{\partial \zeta} z_{s}^{x,\zeta}$, for all $s \geq 0$, we get from~\eqref{eq:zIDE_infinitetris} that $\widehat z(\zeta)$ solves the initial value problem
\begin{equation}
\label{eq:hatzIDE}
\begin{dcases}
\widehat z''_s(\zeta) = (\rho+2\delta) \widehat z'_{s}(\zeta)+\beta \de^{\delta (1+\beta)s}z_{s}(\zeta)^{-\beta-1}\widehat z_{s}(\zeta), &s \geq 0, \\
\widehat z_0 (\zeta)= 0 \, , \\
\widehat z'_0(\zeta) = 1 \, ,
\end{dcases}
\end{equation}
where $z_s(\zeta) \coloneqq z_s^{x,\zeta}$.
Let 
\begin{equation*}
s^{*}(\zeta) \coloneqq \inf\{s\geq 0 \colon \widehat z'_{s}(\zeta)\leq 0\}>0.
\end{equation*}
By definition $z'_{s}(\zeta)>0$, for all $s\in[0,s^{*}(\zeta))$, which implies
\begin{equation*}
\widehat z_{s}(\zeta)>0, \qquad \forall s\in[0,s^{*}(\zeta)).
\end{equation*}
Therefore, using~\eqref{eq:hatzIDE} we get that $\widehat z''_{s}(\zeta)>0$, for all $s\in[0,s^{*}(\zeta))$.
It follows that
\begin{equation*}
\widehat z'_{s}(\zeta)\geq 1, \qquad \forall s\in[0,s^{*}(\zeta)),
\end{equation*}
which implies that $s^{*}(\zeta)=+\infty$ and, in turn,  that $\widehat z_{s}(\zeta)\geq s$, for all $s\geq 0$.

%Then 
%$$
%\widehat z_{s}(\xi)\geq 1 \ \ \ \forall s\geq t+1.
%$$
Using again~\eqref{eq:hatzIDE}, 
%with initial conditions set at time $1$ {\color{red}(AC: serve davvero? Non si può iniziare sempre da zero?)} {\color{blue}{GIORGIO: anche a me pare si possa partire da zero}},
we then get
\begin{equation}
\label{eq:hatzIDEnew}
\begin{dcases}
\widehat z''_s(\zeta) \ge (\rho+2\delta) \widehat z'_{s}(\zeta), &s \geq 0, \\
\widehat z'_0(\zeta) = 1 \, ,
\end{dcases}
\end{equation}
which, implies, by comparison,
$$
\widehat z'_{s}(\zeta)\ge \de^{(\rho+2\delta) s}, \qquad \forall s\geq 0,
$$
and, consequently,
%
%Since $\widehat z'_t(\xi)=1$ and $\widehat z_t(\xi)=0$, one must have $\widehat z_\varepsilon (\xi)>0$ for some $\varepsilon >0$. By continuity of $\xi\mapsto \widehat z_{\varepsilon}(\xi)$, there exists then $\gamma>0$ such that  $\widehat z_\varepsilon(\xi)\geq \gamma$. Then, looking at~\eqref{eq:hatzIDE}, one must have the estimate 
\begin{equation*}
\widehat z_{s}(\zeta)\geq 	 \frac{1}{\rho+2\delta}[\de^{(\rho+2\delta)s}-1], \qquad \forall s\geq 0.
\end{equation*}

%Now we, accordingly, distinguish two cases.
%\begin{itemize}
%\item[$\mbox{(i)'}$]  There exists $a<b$ such that  $\hat z'_0(\xi)\leq 0$ for every $\xi\in[a,b]$. In this case, we have
%$$
%z_{s}(b)-z_{s}(a)\leq \int_{a}^{b}\hat z_{s}(\xi) \dd \xi\leq 0, \ \ \ \forall \ s\geq 0.
%$$
%This is not possible as, since $z_{0}(b)=z_{0}(a)=x$ and $z'_{0}(b)=b>a=z_{0}'(a)$,  clearly there must exists $\varepsilon>0$ such that $z_{\varepsilon}(b)>z_{\varepsilon}(a)$.
%%\item[(ii)] Case $\hat z'_0(\xi)<0$. In this case, one must have $\hat z_\varepsilon (\xi)<0$ for some $\varepsilon >0$. Then, looking at \eqref{zzaode}, one must have, for some $\delta>0$, the estimate $$\hat z_{s}(\xi)\leq -\delta e^{(\rho+2\delta)s}.$$
%\item[$\mbox{(ii)'}$]  The complement of the case $\mbox{(i)'}$ above.
Thus, for every $a<b$, we have
% there exist $\varepsilon,\gamma>0$,  independent of $\xi$ by continuity \red{(AC: Perché?)}, such that
%$$\hat z_{s}(\xi)\geq \gamma \de^{(\rho+2\delta)(s-\varepsilon)} \ \ \ \ \forall s\geq \varepsilon, \ \ \forall \xi\in[a,b].$$
%Hence,

\begin{equation*}
z_{s}(b)-z_s(a)\geq  \int_{a}^{b}\widehat z_{s}(\zeta) \, \dd \zeta\geq (b-a)  \frac{1}{\rho+2\delta}
[\de^{(\rho+2\delta)s}-1],  \qquad \forall s\geq 0.
\end{equation*}

We know that, for some $\overline \zeta > 0$, there exists a global solution $z(\overline \zeta) \in \mathcal{C}_{x,\infty}$ to~\eqref{eq:zIDE_infinitetris}, since any solution $z$ to~\eqref{eq:zIDE} also verifies this equation. Using the estimate above and~\eqref{eq:zIDE_solution_set_infinite}, we get, for all $b > a \coloneqq \overline \zeta$,

\begin{equation*}
z_{s}(b) \geq z_s(\overline \zeta) + (b-\overline \zeta) {\frac{1}{\rho+2\delta}} [\de^{(\rho+2\delta)s}-1] \geq x +  (b-\overline \zeta) \frac{1}{\rho+2\delta}
[\de^{(\rho+2\delta)s}-1], \qquad \forall s\geq 0.
\end{equation*}

This implies that $z(b)$ would grow exponentially at a rate at least $\rho+2\delta > \delta(1+\beta)$, which contradicts the fact that solutions in~\eqref{eq:zIDE_solution_set_infinite} need to grow at a rate at most $\delta(1+\beta)$.

Similarly, we get, for all $a  < \overline \zeta \eqqcolon b$,

\begin{equation*}
z_{s}(a) \leq z_s(\overline \zeta) - (\overline \zeta-a) {\frac{1}{\rho+2\delta}} [\de^{(\rho+2\delta)s}-1] \leq \overline \alpha_s^{x,\infty} - (\overline \zeta-a) \frac{1}{\rho+2\delta}
[\de^{(\rho+2\delta)s}-1], \qquad \forall s\geq 0.
\end{equation*}

This implies that $z(a)$ becomes negative in finite time, which contradicts the fact that solutions in~\eqref{eq:zIDE_solution_set_infinite} are positive.

By arbitrariness of $b > \overline \zeta$ and $a < \overline \zeta$, we conclude.
%\end{itemize}

\medskip

\textit{Convergence.} 
%We deal with the integro-differential equation for $y$, i.e. 
%\begin{align}
%\label{eq:IDEinf}
%\begin{dcases}
%\dfrac{\dd}{\dd s} y_s = -\delta y_s + \int_s^\infty \de^{-(\rho+\delta)(u-s)} y_u^{-\beta} \, \dd u, &s \geq t, \\
%y_t = x \, .
%\end{dcases}
%\end{align}
% Differentiating it, we get the second order ODE
As recalled at the beginning of the proof, uniqueness of the solution to~\eqref{eq:zIDE} implies uniqueness of the solution to~\eqref{eq:IDE}, which is also twice continuously differentiable.
Differentiating~\eqref{eq:IDE}, we get the second-order ODE
\begin{equation}\label{ODEy}
y''_s = \rho y'_{s}+ (\rho+\delta)\delta y_s - y_s^{-\beta}, \quad s \geq 0.
\end{equation}
Hence, any solution to \eqref{eq:IDE} is also a solution to~\eqref{ODEy}.
We observe that $y_\infty$ is the unique solution to the algebraic equation
\begin{equation}\label{eq:segno}
 (\rho+\delta)\delta y - y^{-\beta}=0.       
\end{equation}
Hence,  the function $y_s = y_{\infty}$, for all $s \geq 0$, is the unique stationary solution to~\eqref{eq:IDE}.

Now, we show monotonicity and convergence of the solution to~\eqref{eq:IDE} when $x \neq y_\infty$. We prove the case $x>y_{\infty}$, as the other one can be established by similar arguments.

%\emph{Case $x>y_{\infty}$.} 
Let $y$ be the unique solution to~\eqref{eq:IDE} and fix $x > y_\infty$. We prove, first, that $y_s>y_\infty$, for all $s \geq 0$. Let us define
\begin{equation*}
s_{0} \coloneqq \inf\{s\geq 0 \colon y_{s}=y_{\infty}\} \, ,
\end{equation*}
and assume, by contradiction, that  $s_{0}<+\infty$.
We have two cases:
\begin{itemize}
\item[(i)] $y_{s_{0}}'<0$;
\item[(ii)] $y_{s_{0}}'=0$.
\end{itemize}
Consider the first case. By the flow property, the function $\widetilde y_s \coloneqq y_{s-s_0}$, $s \geq s_0$, solves~\eqref{eq:IDE} with initial condition equal to $y_\infty$ and it satisfies the a priori bounds given in~\eqref{eq:IDE_solution_bounds_infinite} (with $x = y_\infty$). However, this solution is different from the constant one, which is the unique solution to~\eqref{eq:IDE} with initial condition $y_\infty$ and satisfying the aforementioned bounds. Hence we have a contradiction.

The second case leads to a contradiction as well. Indeed, we would have two different solutions in the interval $[0,s_{0}]$ to the Cauchy problem~\eqref{ODEy}, with terminal conditions $y_{s_{0}}=y_{\infty}$ and $y'_{s_{0}}=0$, which clearly admits unique solution, that is, $y_s = y_{\infty}$, for all $s \geq 0$. 
%The first case leads to a contradiction, as it would mean that there are two different solutions in the class $\mathcal{C}_{s_{0},y_{\infty},\infty}$ to \eqref{eq:IDEinf} starting at $(s_{0},y_{\infty})$ (the constant one and $y|_{[s_0,\infty)}$).
%The second case leads to a contradiction as well. Indeed, we would have two different solutions in the interval $[t,s_{0}]$ to the Cauchy problem \eqref{ODEy} with the (terminal) conditions $y_{s_{0}}=y_{\infty}$ and $y'_{s_{0}}=0$, which clearly admits unique solution (that is, $y_{\cdot}\equiv y_{\infty}$). 

Hence, we have proved that $y_s > y_{\infty}$, for all $s \geq 0$. This implies, by strict monotonicity of the map $y \mapsto (\rho+\delta)\delta y - y^{-\beta}$ and \eqref{eq:segno}, that
\begin{equation}\label{eq:y_monotonia}
(\rho+\delta)\delta y_s - y_s^{-\beta} > (\rho+\delta)\delta y_{\infty} - y_{\infty}^{-\beta} = 0, \quad \forall s \geq 0 \, ,
\end{equation}
and hence, from~\eqref{ODEy}, that
\begin{equation}\label{eq:derivataseconda}
y_s'' > \rho y'_s, \quad \forall s \geq 0 .
\end{equation}
Now we show that $y'_s<0$, for all $s \geq 0$. Let
\begin{equation*}
s_{1} \coloneqq \inf\{s\geq 0 \colon y'_{s}\geq 0\}.
\end{equation*}
Assume, by contradiction, that $s_1 < +\infty$.
Then, from \eqref{eq:derivataseconda} we get that $y''_{s_1}> 0$.
It follows that there exist $\eta,\varepsilon>0$ such that $y'_{s_1+\varepsilon} = \eta > 0$. Using \eqref{eq:derivataseconda} again, we get
\begin{equation}
\label{eq:sisnewdersec}
\begin{dcases}
y''_{s} > \rho y'_{s} , & \forall s \geq s_1+\epsilon,\\
y'_{s_1+\varepsilon}  = \eta>0.
\end{dcases}
\end{equation}
Therefore, solving the corresponding Cauchy problem and using the comparison, we get
\begin{equation}
\label{eq:contradictionarg}
y'_s\geq  \eta \de^{\rho [s-(s_1+\varepsilon)]} > 0, \quad s\geq s_1+\varepsilon.
\end{equation}
%This implies that $y$ grows at least at rate $\rho$.
Next, differentiating~\eqref{ODEy}, we get that $y \in \dC^3([0,+\infty); (0,+\infty))$ and that it satisfies
\begin{equation}\label{ODEybis}
y'''_{s}=\rho y''_{s}+(\rho+\delta)\delta y'_{s}+\beta y_{s}^{-\beta-1}y'_{s}, \quad s \geq 0.
\end{equation}
Since, by~\eqref{eq:contradictionarg}, we know that $y_s' >0$ for all $s \geq s_1+\varepsilon$, we deduce that 
\begin{equation*}
\begin{dcases}
    y_s''' > \rho y_s''+(\rho+\delta)\delta y'_{s}, & \forall s \geq s_1 +\varepsilon, \\
    y'_{s_1+\varepsilon}=\eta, \\
    y''_{s_1+\varepsilon} = \kappa,
\end{dcases}
\end{equation*}
with $\eta > 0$ and $\kappa > \eta\rho > 0$, by~\eqref{eq:sisnewdersec}.
%and that $y'_{s_1+\varepsilon}=\eta>0$ and
%$y''_{s_1+\varepsilon} = \kappa >\rho\eta>0$ by \eqref{eq:sisnewdersec}.
Solving the corresponding Cauchy problem and using the comparison, we get
\begin{equation*}
    y'_s > \dfrac{\eta\delta + \kappa}{\rho+2\delta} \de^{(\rho + \delta)[s-(s_1+\varepsilon)]}
+\dfrac{\eta(\rho+\delta)-\kappa}{\rho+2\delta} \de^{-\delta[s-(s_1+\varepsilon)]}, \quad \forall s \geq s_1 +\varepsilon \, .
\end{equation*}
This implies that $y$ grows, in the long run, at least with rate  $\rho+\delta$ which is strictly bigger than $\delta\beta$, as prescribed by the admissibility condition in 
\eqref{eq:IDE_solution_bounds_infinite}. The contradiction follows, and hence we proved that $y'_s<0$, for all $s \geq 0$.

%and hence  
%This shows that $y'_{s}<0$ until $y_{s}>y_{\infty}$.
%Assume now, by contradiction, that $y_{s^{*}}=y_\infty$ for some $s^*>t$.  
%
%
%assume, by contradiction, that there exists $s_0\geq t$ such that
%\begin{equation}\label{ssa}
%y_{s_{0}}>y_\infty \ \ \mbox{and} \ \ y_{s_{0}}'\geq 0.
%\end{equation}
%Then,
%$$y''_{s_{0}} \geq  (\rho+\delta)\delta y_{s_{0}} - y_{s_{0}}^{-\beta} >0.
%$$
%By comparison, one obtains
%$$
%(\rho+\delta)\delta y_s - y_s^{-\beta} > 0 \ \ \ \forall s\geq s_{0}.
%$$
%$y_{s}\gtrsim e^{\rho s}$, which contradicts $y_{\cdot}\in \mathcal{C}_{t,x,\infty}$. 
%We have then proved that  
%$$y_{s}'<0 \ \ \ \ \ \ \   \forall t\leq s\leq  s^{*}:=\inf\{s\geq t: \ y_{s}\leq y_{\infty}\}.$$
%Notice that, in particular  $y'_{t}<0$ and $s^{*}>t$.

We are now going to show that $y''_s>0$, for all $s \geq 0$. 
%Again differentiating~\eqref{ODEy} we get that $y \in \dC^3([0,+\infty); (0,+\infty))$ and that it satisfies
%\begin{equation}\label{ODEybis}
%y'''_{s}=\rho y''_{s}+(\rho+\delta)\delta y'_{s}+\beta y_{s}^{-\beta-1}y'_{s}, \quad s \geq 0.
%\end{equation}
Since we proved that $y_s' <0$, for all $s \geq 0$, we deduce from~\eqref{ODEybis} that 
\begin{equation}\label{eq:derivataterza}
    y_s''' < \rho y_s'', \quad \forall s \geq 0 \, .
\end{equation}
Assume by contradiction that $y''_{s_{2}}\leq 0$ for some $s_{2} \geq 0$. Then, from~\eqref{eq:derivataterza} we get that $y'''_{s_2} < 0$.
It follows that there exist $\theta<0$, $\tau >0$ such that $y''_{s_2+\tau} = \theta < 0$. Using again~\eqref{eq:derivataterza}, we get
\begin{equation*}
\begin{dcases}
y'''_{s} < \rho y''_{s}, & \forall s \geq s_2 + \tau, \\
y''_{s_2+\tau} = \theta.
\end{dcases}
\end{equation*}
Therefore, $y''_{s}\leq \theta \de^{\rho s} < 0$, for all $s \geq s_2 + \tau$. Considering that $y_{s}'<0$, for all $s \geq 0$, this implies that the graph of $s \mapsto y_s$ lies below a straight line with strictly negative slope, contradicting the fact that $y_s >y_{\infty}$, for all $s \geq 0$, as previously established.

So, we have proved that, for all $s \geq 0$, 
\begin{equation*}
y_s>y_{\infty}, \quad y'_s< 0, \quad  y''_s>0.
\end{equation*}
This implies that there exists $\bar y \coloneqq \displaystyle\lim_{s\to \infty} y_{s}\geq y_\infty$ and that $\displaystyle\lim_{s\to\infty} y'_{s}=0.$
However, it cannot be that $\bar y>y_{\infty}$, as it would imply, using~\eqref{ODEy}, that $\displaystyle \lim_{s \to \infty} y_s'' > 0$, contradicting the fact that $y_s' \to 0$, as $s \to +\infty$. Therefore, $\bar y = y_\infty$. \qedhere
%
%
%If $s^*<\infty$, then ?????   $y_{s}\equiv y_{\infty}$ for every $s\geq s^{*}$ contradicting the uniqueness of solutions. It follows that
%\begin{equation}
%\label{gas}
%y'_{s}<0, \ \  \ \ y_{s}>y_\infty, \ \ \ \ \forall s> s_{0}.
%\end{equation}
%In particular, for some $\bar y\geq y_{\infty}$,
% $$y_{s}\to \bar y.$$
% By \eqref{gas},
% we then get
% $$
% y''_{s}\to 0, \  y'_{s}\to 0,
% $$
%implying that  that $y_{\cdot}\equiv \bar y$ must solve the ODE as well \red{(AC: Perché? SF: Va un po' meglio?}. But this means that $\bar y=y_{\infty}$. \\

%\emph{Case $x<y_{\infty}$.} A symmetric argument holds.
%
%
%
%
%---------------  $\circ$  -------------
%
%To show uniqueness of the solution to~\eqref{eq:zIDE_infinite} it is convenient to look at the original integro-differential equation~\eqref{eq:IDE} and re-write it as the following system of ODEs
%\begin{equation*}
%\begin{dcases}
%q^\prime_s = p_s,	& q_t = \de^{\delta t}x, \\
%p^\prime_s = \delta^2 q_s - q_s^{-\beta}, & p_t = \xi,
%\end{dcases}
%\end{equation*}
%where we introduced an auxiliary initial condition $\xi$ in the second equation.
%
%{\color{red}To be completed.}
\end{proof}

We are now ready to establish existence and uniqueness of an equilibrium for the mean-field game.

\begin{theorem}
\label{prop:MF_exist_uniq_infinite}
For each fixed random variable $\xi$ satisfying Assumption~\ref{hp:xi}, there exists a unique equilibrium $(\widehat \bfu, \widehat{q})$ for the mean-field game with infinite time horizon, among all equilibria {$(\widetilde \bfu, \widetilde q) \in \cU_\infty \times \cQ_\infty$.}

More precisely, $\widehat \bfu = z^{(\infty,\widehat{q})}$, where $z^{(\infty,q)}$ is the function defined in~\eqref{eq:z_infinite}, and $\widehat{q}$ is the unique solution to~\eqref{eq:IDE}, with initial condition $x = \bbE[\xi]$.
\end{theorem}

\begin{proof}
Let $\widehat{q}$ be the unique solution to~\eqref{eq:IDE}, with $x = \bbE[\xi]$. We need to check that {$\widehat q \in \cQ_\infty$.} We have the following three cases.

\medskip

\textit{Case $x = y_\infty$.} From Theorem~\ref{th:fixed_point_infinite}, we know that $\widehat{q}_s = y_\infty$, for all $s \geq 0$. Therefore,
\begin{equation}
\label{eq:z_bound}
z^{(\infty,\widehat{q})}_s = K \coloneqq \dfrac{y_\infty^{-\beta}}{\rho+\delta}, \quad \forall s \geq 0 \, ,
\end{equation}
and hence {$\widehat q \in \cQ_\infty$.}

\medskip

\textit{Case $x > y_\infty$.} From Theorem~\ref{th:fixed_point_infinite}, we know that $\widehat{q}$ is monotone decreasing and converges to $y_\infty$. This implies that, $0 < y_\infty < \widehat{q}_s \leq x$, for all $s \geq 0$, and that $z^{(\infty, \widehat{q})}$ is positive and bounded above by the constant $K$ defined in~\eqref{eq:z_bound}. Therefore, {$\widehat q \in \cQ_\infty$.}

\medskip

\textit{Case $x < y_\infty$.} From Theorem~\ref{th:fixed_point_infinite}, we know that $\widehat{q}$ is monotone increasing and converges to $y_\infty$. This implies that, $0 < x \leq \widehat{q}_s < y_\infty$, for all $s \geq 0$, and that $z^{(\infty, \widehat{q})}$ is positive and bounded above by $\frac{x^{-\beta}}{\rho+\delta}$. Therefore, {$\widehat q \in \cQ_\infty$.}

\medskip

Applying Theorem~\ref{th:MF_IDE}-\ref{th:MF_IDE_infinite_onlyif} we get the result. 
\end{proof}
{
A crucial question regarding the robustness of the mean-field equilibrium is whether the aggregate behavior of the agents remains consistent when passing from a large but finite horizon to an infinite one. Thanks to the characterization of the equilibrium provided by Theorem \ref{th:MF_IDE} and the analytical results obtained in Theorem \ref{th:fixed_point_infinite}, we can formally establish that the finite-horizon equilibrium $\widehat{q}^{(T)}$ converges to the unique infinite-horizon equilibrium $\widehat{q}^{(\infty)}$. This result, summarized in the following corollary, ensures the stability of the model's predictions with respect to the time horizon $T$ and justifies the use of the infinite-horizon case as a structural limit for long-term strategic interactions.

\begin{corollary}[Convergence of the Mean-Field Equilibrium]
\label{cor:convergence_equilibrium}
For each $T > 0$, let $(\widehat{\mathbf{u}}^{(T)}, \widehat{q}^{(T)})$ be the unique equilibrium of the mean-field game with finite horizon $T$, where $\widehat{q}^{(T)}$ is the unique solution to \eqref{eq:IDE}. Under the assumptions of Theorem \ref{th:fixed_point_infinite}, as $T \to +\infty$, the sequence of equilibrium trajectories $\{\widehat{q}^{(T)}\}_{T>0}$ converges to the unique infinite-horizon equilibrium $\widehat{q}^{(\infty)}$ in the weighted space $\dL^1_{\rho+2\delta}([0,\infty);\mathbb{R})$. Consequently, the optimal strategies $\widehat{\mathbf{u}}^{(T)}$ converge to the infinite-horizon optimal strategy $\widehat{\mathbf{u}}^{(\infty)}$ in the same sense.
\end{corollary}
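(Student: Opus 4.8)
The plan is to recognise that this is, in essence, the limiting argument already carried out in the existence part of Theorem~\ref{th:fixed_point_infinite}, now run along an arbitrary sequence of horizons $T_k\to+\infty$ instead of along the integers, and then to promote subsequential convergence to convergence of the whole family by invoking the uniqueness statement of that same theorem. First I would pass to the transformed variable of Lemma~\ref{prop:IDE_transformation}: for finite $T$ set $z^{(T)}_s\coloneqq\de^{\delta s}\widehat q^{(T)}_s$ on $[0,T]$, extended to be constant on $(T,+\infty)$, and set $z^{(\infty)}_s\coloneqq\de^{\delta s}\widehat q^{(\infty)}_s$. By Lemma~\ref{prop:IDE_transformation} together with Theorems~\ref{prop:MF_exist_uniq_finite} and~\ref{prop:MF_exist_uniq_infinite}, $z^{(T)}$ solves~\eqref{eq:zIDE} with horizon $T$ and $z^{(\infty)}$ is \emph{the} solution of~\eqref{eq:zIDE} with $T=+\infty$ lying in $\cC_{x,\infty}$, all with the same initial datum $x=\bbE[\xi]$. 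Since multiplication by $\de^{-\delta s}\le 1$ is bounded on $\dL^1_{\rho+2\delta}([0,\infty);\R)$, it suffices to prove that $z^{(T)}\to z^{(\infty)}$ and $(z^{(T)})^\prime\to(z^{(\infty)})^\prime$ in $\dL^1_{\rho+2\delta}$ as $T\to+\infty$; the convergences of $\widehat q^{(T)}=\de^{-\delta\cdot}z^{(T)}$ and of $\widehat\bfu^{(T)}=\de^{-\delta\cdot}(z^{(T)})^\prime$ (this last identity follows by comparing~\eqref{eq:z_finite} with~\eqref{eq:zIDE}, with the convention that $\widehat\bfu^{(T)}$ is extended by $0$ past $T$) then follow immediately.

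Second, I would check that the a priori estimates of the existence proof of Theorem~\ref{th:fixed_point_infinite} hold uniformly in $T$. Since $z^{(T)}_s\ge x$ for all $s$, one has $(z^{(T)}_u)^{-\beta}\le x^{-\beta}$, hence $0\le(z^{(T)}_s)^\prime\le K\de^{\delta(1+\beta)s}$ with $K\coloneqq x^{-\beta}/(\rho+\delta-\delta\beta)$ on $[0,T]$ (and trivially on $(T,+\infty)$, where the derivative vanishes), and, using monotonicity of $z^{(T)}$ together with $\overline\alpha^{x,T}\le\overline\alpha^{x,\infty}$ on $[0,T]$ and monotonicity of $\overline\alpha^{x,\infty}$, also $x\le z^{(T)}_s\le\overline\alpha^{x,\infty}_s$ for every $s\ge0$; here the assumption $\beta<1+\tfrac\rho\delta$ is precisely what keeps $\overline\alpha^{x,\infty}$ finite and $K$ positive. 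The Fréchet--Kolmogorov equi-integrability computation performed in that proof for $\{z^{(n)}\}_{n\ge1}$ then goes through verbatim for $\{z^{(T)}\}_{T>0}$, and the family of derivatives $\{(z^{(T)})^\prime\}_{T>0}$ is dominated by $g_s\coloneqq K\de^{\delta(1+\beta)s}\in\dL^1_{\rho+2\delta}$; by the Fréchet--Kolmogorov theorem both families are relatively compact in $\dL^1_{\rho+2\delta}$. The derivative bound also makes $\{z^{(T)}\}$ equi-Lipschitz, hence equicontinuous, on every compact subinterval, which I would use to pass to locally uniform limits.

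The heart of the argument is then the subsequence-plus-uniqueness step. Given any sequence $T_k\to+\infty$, relative compactness lets me extract a subsequence along which $z^{(T_k)}\to z^\star$ and $(z^{(T_k)})^\prime\to v^\star$ in $\dL^1_{\rho+2\delta}$, and, after one more extraction, also a.e.\ and locally uniformly. I would then repeat the identification from the existence proof of Theorem~\ref{th:fixed_point_infinite}: pass to the limit in the right-hand side of~\eqref{eq:zIDE_n} by dominated convergence (using the domination $(z^{(T_k)}_u)^{-\beta}\le x^{-\beta}$ and $\beta<1+\tfrac\rho\delta$) to see that $(z^{(T_k)})^\prime$ converges a.e.\ to $s\mapsto\de^{(\rho+2\delta)s}\int_s^{+\infty}\de^{-(\rho+\delta-\delta\beta)u}(z^\star_u)^{-\beta}\,\dd u$, identify this with $v^\star=(z^\star)^\prime$ by completeness of the weighted Sobolev space $W^{1,1}_{\rho+2\delta}$, and conclude that a continuous representative of $z^\star$ lies in $\cC_{x,\infty}$ and solves~\eqref{eq:zIDE} with $T=+\infty$. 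By the uniqueness statement of Theorem~\ref{th:fixed_point_infinite} this forces $z^\star=z^{(\infty)}$ and $v^\star=(z^{(\infty)})^\prime$; since the limit is independent of the extracted subsequence, a standard contradiction argument (if $z^{(T)}\not\to z^{(\infty)}$ in $\dL^1_{\rho+2\delta}$, pick a sequence staying bounded away from $z^{(\infty)}$ and extract from it a sub-subsequence converging to $z^{(\infty)}$ by the above) upgrades this to $z^{(T)}\to z^{(\infty)}$ and $(z^{(T)})^\prime\to(z^{(\infty)})^\prime$ in $\dL^1_{\rho+2\delta}$ as $T\to+\infty$. Translating back via $\widehat q^{(T)}=\de^{-\delta\cdot}z^{(T)}$ and $\widehat\bfu^{(T)}=\de^{-\delta\cdot}(z^{(T)})^\prime$ yields the Corollary.

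I do not expect a genuinely new obstacle: the two substantial ingredients --- compactness in $\dL^1_{\rho+2\delta}$ and the identification of the limit as a solution of the infinite-horizon integro-differential equation --- are already contained in the proof of Theorem~\ref{th:fixed_point_infinite}, and the only additional input needed is the uniqueness in $\cC_{x,\infty}$, also established there, which is exactly what lets one pass from subsequential convergence to convergence of the whole family. The one point that calls for a little care is verifying that the a priori bounds are genuinely uniform over all real horizons $T$ (not just integers) and that extending $z^{(T)}$ by a constant beyond $T$ keeps it in $\cC_{x,\infty}$ and does not create a corner in its first derivative at $s=T$ --- indeed $(z^{(T)})^\prime_T=\de^{\delta T}z^{(T,\widehat q^{(T)})}_T=0$ since $z^{(T,\widehat q^{(T)})}_T=0$ --- but this is routine.
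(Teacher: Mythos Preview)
Your proposal is correct and follows essentially the same approach as the paper: invoke the Fr\'echet--Kolmogorov compactness already established in the existence part of Theorem~\ref{th:fixed_point_infinite} for the transformed family $\{z^{(T)}\}$, identify any subsequential limit as a solution of the infinite-horizon equation, and use uniqueness in $\cC_{x,\infty}$ to upgrade to convergence of the whole family. Your write-up is in fact more detailed than the paper's own proof, which is quite terse; in particular, your explicit verification that $\widehat\bfu^{(T)}=\de^{-\delta\cdot}(z^{(T)})^\prime$ and that the constant extension past $T$ is $\dC^1$ (since $(z^{(T)})^\prime_T=0$) fills in points the paper leaves implicit.
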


\begin{proof}
The proof of Theorem \ref{th:fixed_point_infinite} shows that the family of transformed solutions $\cZ \coloneqq \{z^{(T)}\}_{T > 0}$ is relatively compact in $\dL^1_{\rho+2\delta}$ by means of the Fréchet-Kolmogorov theorem. 
Let $t_n \to \infty$ be an arbitrary sequence, consider the corresponding sequence $\{z^{(t_n)}\}$, and take any subsequence $\{z^{(t_{n_k})}\}$. By the existence part of the proof of Theorem \ref{th:fixed_point_infinite}, its limit must solve the infinite-horizon integro-differential equation. Since the uniqueness part of the same Theorem guarantees that such a solution is unique within the admissible set $\cC_{x,\infty}$, we conclude that $z^{(t_{n_k})} \to z^{(\infty)}$. 
Since this holds for an arbitrary sequence $t_n \to \infty$, we conclude that the entire family $\cZ$ converges to $z^{(\infty)}$ as $T \to \infty$. By the continuous mapping between the auxiliary function $z$ and the equilibrium trajectory $\widehat{q}$ (and the corresponding optimal control $\widehat{\mathbf{u}}$), the convergence of the equilibrium follows.
\end{proof}}

\begin{remark}\label{rem:extension2_t}
Also in this case the results of this section can be easily extended to the case where we consider an initial time $t > 0$. 

Clearly, we need to adapt (in an obvious way) the definition of equilibrium given in Definition~\ref{def:equilibrium} to include the initial time $t$.
The statement of Theorem~\ref{th:MF_IDE} is also adapted accordingly.

The integro-differential equation~\eqref{eq:IDE} becomes
\begin{align}\label{eq:IDE_t}
\begin{dcases}
\dfrac{\dd}{\dd s} y_s = -\delta y_s + \int_s^T \de^{-(\rho+\delta)(u-s)} y_u^{-\beta} \, \dd u, &s \in [t,T], \\
y_t = x \, ,
\end{dcases}
\end{align}
while equation~\eqref{eq:zIDE} becomes
\begin{equation}\label{eq:zIDE_t}
\begin{dcases}
\dfrac{\dd}{\dd s} z_s = \de^{(\rho+2\delta)s} \int_s^T \de^{-(\rho+\delta-\delta\beta)u} z_u^{-\beta} \, \dd u, &s \in [t,T], \\
z_t = \de^{\delta t}x\, .
\end{dcases}
\end{equation}
The sets in which we look for solutions to~\eqref{eq:zIDE_t} (now dependent on $t$) are
\begin{equation}
\label{eq:zIDE_solution_set_t}
\cC_{t,x} \coloneqq \{f \in \dC([t,T];(0,+\infty)) \colon \de^{\delta t}\underline \alpha^{x}_s \leq f_s \leq \de^{\delta t}\overline \alpha^{x}_s, \, \forall s \in [t,T]\},
\end{equation}
in the finite time horizon case, and
\begin{equation}
\label{eq:zIDE_solution_set_infinite_t}
\cC_{t,x,\infty} \coloneqq \{f \in \dC([t,+\infty);(0,+\infty)) \colon \de^{\delta t} \underline \alpha^{x, \infty}_s \leq f_s \leq \de^{\delta t}\overline \alpha^{x,\infty}_s, \, \forall s \geq t\},
\end{equation}
in the infinite time horizon case.

The statements of Theorems~\ref{th:fixed_point_finite}, \ref{prop:MF_exist_uniq_finite}, \ref{th:fixed_point_infinite}, and~\ref{prop:MF_exist_uniq_infinite}, remain the same (except for minor modifications). The bounds given in~\eqref{eq:IDE_solution_bounds} become
\begin{equation}
\label{eq:IDE_solution_bounds_t}
\de^{-\delta (s-t)} x \leq y_s \leq \de^{-\delta (s-t)} \overline \alpha^{x}_s, \qquad s \in [t,T] \, ,
\end{equation}
while those given in~\eqref{eq:IDE_solution_bounds_infinite} become
\begin{equation}
\label{eq:IDE_solution_bounds_infinite_t}
\de^{-\delta (s-t)} x \leq y_s \leq \de^{-\delta (s-t)} \left[x - \dfrac{x^{-\beta}}{\delta(1+\beta)(\rho+\delta-\delta\beta)} \right] + \dfrac{\de^{\delta\beta (s-t)} x^{-\beta}}{\delta(1+\beta)(\rho+\delta-\delta\beta)}, \quad s \geq t.
\end{equation}
\end{remark}

{{

\subsubsection{Economic Interpretation of the Stationary equilibrium}
\label{sec:economics-stationary}

The explicit nature of the stationary solution in the infinite horizon problem allows for a clear economic characterization of the long-term market equilibrium. In this setting, the unique stationary average production level $y_\infty$ is the positive solution to the algebraic equation $(\rho + \delta)\delta y - y^{-\beta} = 0$ (cf.\ \eqref{eq:segno}), given by:
\begin{equation*}
y_\infty = \left[ \frac{1}{\delta(\rho + \delta)} \right]^{\frac{1}{1+\beta}}.
\end{equation*}
Some comments on the economics of this result are in place.
\smallskip

\textbf{Sensitivity to parameters.} The term $\delta(\rho + \delta)$ can be interpreted as the \textit{effective user cost of capital}. It combines the physical depreciation rate $\delta$ with the opportunity cost of capital $\rho$. As either the discount rate $\rho$ or the depreciation rate $\delta$ increases, the stationary production capacity $y_\infty$ decreases. This reflects the intuition that firms are less willing to maintain high production levels when the future is heavily discounted or when maintaining capacity requires more intensive reinvestment. Furthermore, the parameter $\beta$ represents the inverse of the price elasticity of demand. A higher $\beta$ (less elastic demand) leads to a lower $y_\infty$, as firms strategically limit capacity to avoid the sharp price decreases associated with high aggregate supply in inelastic markets.
\smallskip

\textbf{Market stability and convergence.} The monotone convergence result of Theorem \ref{th:fixed_point_infinite} provides further economic insight. If the initial production level exceeds $y_\infty$, the equilibrium 
path decreases monotonically toward the steady state; if it starts below $y_\infty$, it increases monotonically. Thus, the stationary equilibrium is globally stable and dynamically attractive. Deviations from the steady state 
are gradually absorbed through investment adjustments without oscillations or overshooting, suggesting that competitive investment incentives dampen imbalances rather than amplify them. Notably, the stationary level $y_\infty$ and the trajectory of the average production $\widehat{q}$ are independent of the volatility parameter $\sigma$. This suggests that while individual firms face stochastic fluctuations, the "mean-field" or aggregate economy follows a deterministic and stable path toward equilibrium. We refer also to Section \ref{sec:deterministic} below on this.
}}

\section{The deterministic model}
\label{sec:deterministic}

{{In this section we discuss the deterministic version of the mean-field game problem introduced in Section~\ref{sec:model}. The structure of the problem will allow us to exploit the results given in Sections~\ref{sec:optpb} and~\ref{sec:fixed_point}, to get the unique equilibrium of the mean-field game in an explicit form. Indeed, the results obtained for the stochastic model reveal that the average production capacity and the optimal control at equilibrium do not depend on the parameter $\sigma$ appearing in SDE~\eqref{eq:SDE}, which describes the sensitivity of the production capacity with respect to random shocks. Thus, by formulating a deterministic counterpart of the problem studied in the previous sections, i.e., formally setting $\sigma = 0$ in the dynamics of the production capacity, we are able to show that results similar to those proved in Sections~\ref{sec:optpb} and~\ref{sec:fixed_point} are valid also in this context.

We emphasize that no absolute continuity assumption on the initial distribution is needed in the deterministic model. The evolution of the population distribution will be described directly through the pushforward of the initial law by the deterministic flow. This formulation naturally includes, in particular, Dirac initial distributions.

Let us consider the ordinary differential equation (ODE)
\begin{equation}
\label{eq:ODE}
\left\{
\begin{aligned}
&\dfrac{\dd}{\dd s} X_s = -\delta X_s + u_s, \quad s \in (0,T], \\
&X_0 = x,
\end{aligned}
\right.
\end{equation}
where $\delta > 0$ is a given coefficient and $\bfu \coloneqq (u_s)_{s \in [0,T]}$ is the control, chosen in a suitable class of admissible non-negative functions that will be specified below.

Note that, for any $x \in \R$ and any $\bfu \in \dL^1_{\mathrm{loc}}([0,T]; [0,+\infty))$, Equation~\eqref{eq:ODE} has a unique solution $X^{x,\bfu}$, given by
\begin{equation}
\label{eq:ODEsol}
X_s^{x,\bfu} = \de^{-\delta s} x + \int_0^s \de^{-\delta(s-r)} u_r \, \dd r, \quad s \in [0,T].
\end{equation}
As in the stochastic case, $X$ describes the evolution of the production capacity of a representative agent, which depreciates at a rate $\delta$ and can be increased by choosing the investment rate $\bfu$. Also in the deterministic case we are in an irreversible investment decisions setting, given the non-negativity of the control.

Next, we consider the discounted net profit functional
\begin{equation}
\label{eq:netprofitfunct_det}
J_{T,q}(x, \bfu) \coloneqq \int_0^T \de^{-\rho s} \left\{X_s^{x,\bfu} q_s^{-\beta} - \dfrac 12 u_s^2\right\} \dd s,
\end{equation}
where $\rho > 0$ is a discount factor, $\beta > 0$ is a fixed parameter, and $q = (q_s)_{s \in [0,T]}$ is a given deterministic measurable function.
The control $\bfu$ is chosen in either of the following two classes of admissible controls: if $T < +\infty$,
\begin{align}
\label{eq:admctlr_finite_det}
  \cU_{T} \coloneqq
  \biggl\{\bfu \colon [0,T] \to [0,+\infty) \text{ measurable and such that } \int_0^T u_s^2 \, \dd s < +\infty\biggr\};
\end{align}
if, instead, $T = +\infty$,
\begin{align}
\label{eq:admctlr_infinite_det}
  \cU_{\infty} \coloneqq
  \biggl\{\bfu \colon [0,\infty) \to [0,+\infty) \text{ measurable and such that } \int_0^s u_r \, \dd r < +\infty, \, \forall s \geq 0, \notag \\
  \qquad \text{and } \int_0^{+\infty} \de^{-\rho s} u_s^2 \, \dd s < +\infty\biggr\}.
\end{align}

From an economic point of view, the deterministic model captures an economy in which the production capacity of the representative firm is not affected by random shocks. Its evolution over time is, thus, uniquely determined by the investment efforts to increase it.

We introduce the following assumption.
\begin{assumption}
\label{hp:nu0}
The initial distribution $\nu_0$ of the agents in the economy belongs to $\cP_1((0,+\infty))$, namely,
\begin{equation*}
\nu_0((0,+\infty)) = 1,
\qquad
\int_0^{+\infty} x \, \nu_0(\dd x) < +\infty .
\end{equation*}
\end{assumption}

Here and in what follows, $\cP(E)$ denotes the set of probability measures on a Borel set $E\subseteq\R$, and $\cP_1(E)$ denotes the subset of probability measures with finite first moment. We identify $\nu_0 \in \cP_1((0,+\infty))$ with its extension to $\R$ concentrated on $(0,+\infty)$. If $\Phi \colon \R \to \R$ is Borel-measurable and $\nu \in \cP(\R)$, we denote by $\Phi_\# \nu$ the pushforward of $\nu$ through $\Phi$, i.e., the measure given, for any Borel set $E\subseteq\R$, by
\begin{equation*}
    \Phi_\# \nu(E) \coloneqq \int_\R \ind_E(\Phi(x)) \, \nu(\dd x) = \nu(\Phi^{-1}(E)).
\end{equation*}

Since the representative agent is chosen randomly by picking her/his initial production capacity level $x$ according to the initial distribution $\nu_0$, Assumption~\ref{hp:nu0} ensures that, for any $\bfu \in \cU_T$, $X_s^{x,\bfu} > 0$, for all $s \in [0,T]$ and for $\nu_0$-a.e. $x > 0$. Thus, the production capacity level of the representative agent is never negative.

To describe the evolution of the population distribution, we consider the flow associated with~\eqref{eq:ODE} on the whole real line. This avoids introducing any boundary condition at zero; since the initial distribution is concentrated on $(0,+\infty)$ and the control is non-negative, the resulting law remains concentrated on $(0,+\infty)$. For any $\bfu \in \cU_T$, set
\begin{equation}
\label{eq:flow_det}
\Phi_s^\bfu(x)
\coloneqq
\de^{-\delta s} x + \int_0^s \de^{-\delta(s-r)} u_r \, \dd r,
\qquad
(s,x) \in [0,T]\times\R.
\end{equation}
Then the population distribution at time $s$ is defined by
\begin{equation}
\label{eq:pushforward_det}
\nu_s^{\nu_0,\bfu}
\coloneqq
(\Phi_s^\bfu)_\# \nu_0,
\qquad s \in [0,T].
\end{equation}
Equivalently, for every bounded Borel-measurable function $\varphi \colon \R \to \R$,
\begin{equation}
\label{eq:pushforward_test}
\int_\R \varphi(x) \, \nu_s^{\nu_0,\bfu}(\dd x)
=
\int_\R \varphi(\Phi_s^\bfu(x)) \, \nu_0(\dd x).
\end{equation}

\begin{definition}
\label{def:weak_sol_cont_measure}
A narrowly continuous family $\nu=(\nu_s)_{s\in[0,T]}$ in $\cP_1(\R)$ is a \emph{weak measure-valued solution} to the continuity equation associated with~\eqref{eq:ODE}, with initial condition $\nu_0$, if, for every $\phi \in \dC_c^\infty([0,T]\times\R)$ and every $t\in[0,T]$,
\begin{align}
\label{eq:weak_sol_cont_measure_def}
&\int_\R \phi(t,x) \, \nu_t(\dd x) \notag \\
&\quad =
\int_\R \phi(0,x) \, \nu_0(\dd x)
+ \int_0^t \int_\R
\left\{
\dfrac{\partial}{\partial s} \phi(s,x)
+ [-\delta x + u_s] \dfrac{\partial}{\partial x} \phi(s,x)
\right\}
\nu_s(\dd x) \, \dd s .
\end{align}
\end{definition}

The family $\nu^{\nu_0,\bfu}=(\nu_s^{\nu_0,\bfu})_{s\in[0,T]}$ is the measure-valued solution to the continuity equation associated with~\eqref{eq:ODE}. The next lemma records this fact and the corresponding first-moment identity.

\begin{lemma}
\label{lem:cont_exist_uniq}
For any $\bfu \in \cU_T$ and any $\nu_0 \in \cP_1((0,+\infty))$, the family $\nu^{\nu_0,\bfu}$ defined by~\eqref{eq:pushforward_det} is the unique weak measure-valued solution to the continuity equation associated with~\eqref{eq:ODE}, in the sense of Definition~\ref{def:weak_sol_cont_measure}. More precisely, for every $\phi \in \dC_c^\infty([0,T]\times\R)$ and every $t\in[0,T]$,
\begin{align}
\label{eq:weak_sol_cont_measure}
&\int_\R \phi(t,x) \, \nu_t^{\nu_0,\bfu}(\dd x) \notag \\
&\quad =
\int_\R \phi(0,x) \, \nu_0(\dd x)
+ \int_0^t \int_\R
\left\{
\dfrac{\partial}{\partial s} \phi(s,x)
+ [-\delta x + u_s] \dfrac{\partial}{\partial x} \phi(s,x)
\right\}
\nu_s^{\nu_0,\bfu}(\dd x) \, \dd s .
\end{align}
Moreover, $\nu_s^{\nu_0,\bfu}((0,+\infty))=1$ for every $s\in[0,T]$, and $\nu_s^{\nu_0,\bfu}$ has finite first moment, given by
\begin{equation}
\label{eq:cont_avg}
\int_0^{+\infty} x \, \nu_s^{\nu_0,\bfu}(\dd x)
=
\de^{-\delta s} \int_0^{+\infty} x \, \nu_0(\dd x)
+ \int_0^s \de^{-\delta(s-r)} u_r \, \dd r,
\qquad s \in [0,T].
\end{equation}
\end{lemma}

\begin{proof}
Fix $\bfu \in \cU_T$ and $\nu_0 \in \cP_1((0,+\infty))$. For every $x\in\R$, the map $s\mapsto \Phi_s^\bfu(x)$ is absolutely continuous and solves~\eqref{eq:ODE}. Hence, for every $\phi \in \dC_c^\infty([0,T]\times\R)$ and every $x\in\R$, the chain rule gives, for Lebesgue-a.e. $s\in[0,T]$,
\begin{equation*}
\frac{\dd}{\dd s}\phi(s,\Phi_s^\bfu(x))
=
\partial_s\phi(s,\Phi_s^\bfu(x))
+[-\delta \Phi_s^\bfu(x)+u_s]\partial_x\phi(s,\Phi_s^\bfu(x)).
\end{equation*}
The narrow continuity of $s\mapsto \nu_s^{\nu_0,\bfu}$ follows from~\eqref{eq:pushforward_test}, the continuity of $s\mapsto \Phi_s^\bfu(x)$, and dominated convergence. Integrating the previous identity over $[0,t]$ and then with respect to $\nu_0(\dd x)$, and using the definition of pushforward, yields~\eqref{eq:weak_sol_cont_measure_def}, hence~\eqref{eq:weak_sol_cont_measure}. Since $\Phi_s^\bfu(x)>0$ whenever $x>0$, we also have $\nu_s^{\nu_0,\bfu}((0,+\infty))=1$.

Conversely, uniqueness follows from the uniqueness of the characteristics associated with the vector field $(s,x)\mapsto -\delta x+u_s$ on $\R$ and the standard characterization of measure-valued solutions to the continuity equation by pushforward along the flow. In the present one-dimensional affine case this characterization follows directly from the explicit formula~\eqref{eq:flow_det}.

Finally, applying~\eqref{eq:pushforward_test} with $\varphi(x)=x$ and using~\eqref{eq:flow_det}, we obtain
\begin{align*}
\int_\R x \, \nu_s^{\nu_0,\bfu}(\dd x)
&=
\int_\R \Phi_s^\bfu(x) \, \nu_0(\dd x) \\
&=
\de^{-\delta s} \int_\R x \, \nu_0(\dd x)
+ \int_0^s \de^{-\delta(s-r)} u_r \, \dd r,
\end{align*}
which is finite thanks to Assumption~\ref{hp:nu0} and the admissibility of $\bfu$.
\end{proof}

We are now ready to state the definition of equilibrium for our deterministic mean-field game.
\begin{definition}
\label{def:equilibrium_det}
Fix an initial distribution $\nu_0$ under Assumption~\ref{hp:nu0}.

A pair $(\widehat \bfu, \widehat{q})$, where $\widehat \bfu \in \cU_T$ and $\widehat{q} \colon [0,T] \to (0,+\infty)$ is a measurable function, is an \emph{equilibrium} of the deterministic mean-field game if
\begin{enumerate}[label=(\roman*)]
\item
$J_{T,\widehat{q}}(x, \widehat \bfu) \geq J_{T,\widehat{q}}(x, \bfu)$, for all $\bfu \in \cU_T$ and $\nu_0$-a.e. $x > 0$;
\item
\[
\widehat{q}_s
=
\int_0^{+\infty} x \, \nu_s^{\nu_0,\widehat \bfu}(\dd x),
\qquad s \in [0,T],
\]
where $\nu^{\nu_0,\widehat \bfu}$ is the pushforward flow of measures defined by~\eqref{eq:pushforward_det}.
\end{enumerate}
\end{definition}

Also in this case, we can adopt a fixed point argument to show that there exists a unique equilibrium, by solving, first, the problem of maximizing~\eqref{eq:netprofitfunct_det} for a given measurable function $q$, then determining the optimally controlled dynamics of the production capacity level of the representative agent and, finally, setting the fixed point argument from condition~(ii) in Definition~\ref{def:equilibrium_det}.

\subsection{The optimization problem}

Let us consider the optimization problem
\begin{equation}
\label{eq:V_det}
V_{T,q}(x) \coloneqq \sup_{\bfu \in \cU_{T}} J_{T,q}(x, \bfu), \quad x > 0,
\end{equation}
where $J_{T,q}$ is the discounted net profit functional defined in~\eqref{eq:netprofitfunct_det}, for any given and fixed measurable deterministic function $q$.

In the finite time horizon case, i.e. $T < +\infty$, the following result holds, which is analogous to Proposition~\ref{th:V_finite}.

\begin{proposition}
\label{th:V_finite_det}
Fix $x > 0$ and $q \in \cQ_T$, where $\cQ_T$ is the set defined in~\eqref{eq:QT}. Then, $\widehat \bfu^{(T,q)} \coloneqq z^{(T,q)} \in \cU_{T}$, where $z^{(T,q)}$ is the function defined in~\eqref{eq:z_finite}, is an optimal control for problem~\eqref{eq:V_det}.

Moreover, $\widehat \bfu^{(T,q)}$ is essentially unique, i.e., if $\overline \bfu^{(T,q)} \in \cU_{T}$ is an optimal control for problem~\eqref{eq:V_det} different from $\widehat \bfu^{(T,q)}$, then
\begin{equation*}
\overline u^{(T,q)}_s = \widehat u^{(T,q)}_s, \quad \text{for $\mathrm{Leb}$-a.e. } s \in [0,T].
\end{equation*}

Finally, the value function of the optimization problem admits the explicit expression
\begin{equation}
\label{eq:V_finite_explicit_det}
V_{T,q}(x) = x z^{(T,q)}_0 + \dfrac{1}{2} \int_0^T \de^{-\rho s} (z^{(T,q)}_s)^2 \, \dd s,
\end{equation}
and the optimally controlled state $X^{x, \widehat \bfu^{(T,q)}}$ is given by
\begin{equation}
\label{eq:X_opt_ctrl_finite_det}
X^{x, \widehat \bfu^{(T,q)}}_s
=
\de^{-\delta s} x
+ \int_0^s \de^{-\delta(s-r)} z^{(T,q)}_r \, \dd r,
\quad s \in [0,T].
\end{equation}
\end{proposition}

\begin{proof}
Replicating the argument of the proof of Lemma~\ref{prop:J_finite_properties}, we get that
\begin{equation*}
J_{T,q}(x,\bfu)
=
x z^{(T,q)}_0
+ \int_0^T \de^{-\rho s}
\left\{ z^{(T,q)}_s u_s - \dfrac 12 u_s^2\right\}
\dd s .
\end{equation*}
Then, the result follows using the same reasoning as in the proof of Proposition~\ref{th:V_finite}.
\end{proof}

In the infinite time horizon case, i.e. $T= +\infty$, we have the following statement, which is the deterministic counterpart of Proposition~\ref{th:V_infinite}. We state it without proof.

\begin{proposition}
\label{th:V_infinite_det}
Fix $x > 0$ and $q \in \cQ_\infty$, where $\cQ_\infty$ is the set defined in~\eqref{eq:Qinfty}. Then, $\widehat \bfu^{(\infty,q)} \coloneqq z^{(\infty,q)} \in \cU_{\infty}$, where $z^{(\infty,q)}$ is the function defined in~\eqref{eq:z_infinite}, is an optimal control for problem~\eqref{eq:V_det}.
Moreover, $\widehat \bfu^{(\infty,q)}$ is essentially unique, i.e., if $\overline \bfu^{(\infty,q)} \in \cU_{\infty}$ is an optimal control for problem~\eqref{eq:V_det} different from $\widehat \bfu^{(\infty,q)}$, then
\begin{equation*}
\overline u^{(\infty,q)}_s = \widehat u^{(\infty,q)}_s, \quad \text{for $\mathrm{Leb}$-a.e. } s \geq 0.
\end{equation*}

Finally, the value function of the optimization problem admits the explicit expression
\begin{equation}
\label{eq:V_infinite_explicit_det}
V_{\infty,q}(x) = x z^{(\infty,q)}_0 + \dfrac{1}{2} \int_0^{+\infty} \de^{-\rho s} (z^{(\infty,q)}_s)^2 \, \dd s,
\end{equation}
and the optimally controlled state $X^{x, \widehat \bfu^{(\infty,q)}}$ is given by
\begin{equation}
\label{eq:X_opt_ctrl_infinite_det}
X^{x, \widehat \bfu^{(\infty,q)}}_s
=
\de^{-\delta s} x
+ \int_0^s \de^{-\delta(s-r)} z^{(\infty,q)}_r \, \dd r,
\quad s \geq 0.
\end{equation}
\end{proposition}

\subsection{Existence and uniqueness of equilibria}

Also in the deterministic mean-field game previously introduced, the search for an equilibrium boils down to finding a fixed point of a suitable map. According to Definition~\ref{def:equilibrium_det}, this map is
\[
(q_s)_{s \in [0,T]}
\longmapsto
\left(
\int_0^{+\infty} x \, \nu_s^{\nu_0,\widehat \bfu^{(T,q)}}(\dd x)
\right)_{s \in [0,T]},
\]
where $\widehat \bfu^{(T,q)}$ is the optimal control for the optimization problem~\eqref{eq:V_det} -- whose expression is given either in Proposition~\ref{th:V_finite_det}, in the case $T < +\infty$, or in Proposition~\ref{th:V_infinite_det}, in the case $T = +\infty$ -- and $\nu^{\nu_0,\widehat \bfu^{(T,q)}}$ is the pushforward flow of measures defined by~\eqref{eq:pushforward_det}.

The next result, which is analogous to Theorem~\ref{th:MF_IDE}, shows that also in the deterministic case the fixed point map is the solution map of the same integral equation studied in Section~\ref{sec:fixed_point}. We omit its proof, which is a straightforward adaptation of the proof of Theorem~\ref{th:MF_IDE}; the only change is that the first moment of the population law is now computed through~\eqref{eq:cont_avg}.

\begin{theorem}
\label{th:MF_IDE_det}
Let us fix an initial distribution $\nu_0$ satisfying Assumption~\ref{hp:nu0}, and set
\[
\bar x_0 \coloneqq \int_0^{+\infty} x \, \nu_0(\dd x).
\]

Consider the deterministic mean-field game in the finite time horizon case, i.e., $T < +\infty$. Then,
\begin{enumerate}[label=(\roman*)]
\item If there exists an equilibrium $(\widehat \bfu, \widehat{q})$ in the sense of Definition~\ref{def:equilibrium_det}, such that $\widehat q \in \cQ_T$, then $\widehat{q}$ is a solution to the integral equation
\begin{equation}
\label{eq:IDEint_finite_det}
y_s
=
\de^{-\delta s} \bar x_0
+\int_0^s \de^{-\delta (s-r)}
\int_r^T \de^{-(\rho+\delta)(u-r)} y_u^{-\beta} \, \dd u \, \dd r,
\quad s \in [0,T].
\end{equation}
\item Vice versa, if there exists a unique solution $\widehat{q} \in \cQ_T$ to~\eqref{eq:IDEint_finite_det}, then there exists a unique equilibrium $(\widehat \bfu, \widehat{q}) = (z^{(T,\widehat q)},\widehat{q})$ of the mean-field game among all equilibria $(\widetilde \bfu, \widetilde q) \in \cU_T \times \cQ_T$, where $z^{(T,q)}$ is the function defined in~\eqref{eq:z_finite}.
\end{enumerate}
Consider, instead, the deterministic mean-field game in the infinite time horizon case, i.e., $T = +\infty$. Then,
\begin{enumerate}[label=(\roman*)]
\setcounter{enumi}{2}
\item If there exists an equilibrium $(\widehat \bfu, \widehat{q})$ in the sense of Definition~\ref{def:equilibrium_det}, such that $\widehat q \in \cQ_\infty$, then $\widehat{q}$ is a solution to the integral equation
\begin{equation}
\label{eq:IDEint_infinite_det}
y_s
=
\de^{-\delta s} \bar x_0
+\int_0^s \de^{-\delta (s-r)}
\int_r^{+\infty} \de^{-(\rho+\delta)(u-r)} y_u^{-\beta} \, \dd u \, \dd r,
\quad s \geq 0.
\end{equation}
\item Vice versa, if there exists a unique solution $\widehat{q} \in \cQ_\infty$ to~\eqref{eq:IDEint_infinite_det}, then there exists a unique equilibrium $(\widehat \bfu, \widehat{q}) = (z^{(\infty,\widehat{q})},\widehat{q})$ of the mean-field game among all equilibria $(\widetilde \bfu, \widetilde q) \in \cU_\infty \times \cQ_\infty$, where $z^{(\infty,q)}$ is the function defined in~\eqref{eq:z_infinite}.
\end{enumerate}
\end{theorem}

Finally, we get the following result, which establishes existence and uniqueness of an equilibrium also for the deterministic mean-field game. We omit its proof, since it follows the same lines of the proofs of Theorems~\ref{prop:MF_exist_uniq_finite} and~\ref{prop:MF_exist_uniq_infinite}.

\begin{theorem}
\label{prop:MF_exist_uniq_det}
For each fixed $\nu_0$ satisfying Assumption~\ref{hp:nu0}, there exists a unique equilibrium $(\widehat \bfu, \widehat{q})$ for the deterministic mean-field game with finite time horizon, respectively infinite time horizon, among all equilibria $(\widetilde \bfu, \widetilde q) \in \cU_T \times \cQ_T$, respectively $(\widetilde \bfu, \widetilde q) \in \cU_\infty \times \cQ_\infty$.

More precisely, in the finite time horizon case, $\widehat \bfu = z^{(T,\widehat{q})}$, where $z^{(T,\widehat{q})}$ is the function defined in~\eqref{eq:z_finite}; in the infinite time horizon case, $\widehat \bfu = z^{(\infty,\widehat{q})}$, where $z^{(\infty,\widehat{q})}$ is the function defined in~\eqref{eq:z_infinite}. In both cases, $\widehat{q}$ is the unique solution to~\eqref{eq:IDE}, with initial condition
\[
x = \int_0^{+\infty} y \, \nu_0(\dd y).
\]
In particular, the result applies to Dirac initial distributions $\nu_0=\delta_x$, for every $x>0$.
\end{theorem}

\begin{remark}
We observe that the equilibrium structure derived in the stochastic setting is invariant with respect to the diffusion parameter $\sigma$. This ensures a consistent transition to the deterministic case discussed in the present section. More precisely, for a fixed initial condition and along the same equilibrium control, the explicit representation of the optimally controlled state yields $\widehat{X}^{\sigma} \to \widehat{X}^{0}$ locally uniformly in time, in probability, as $\sigma \downarrow 0$; in fact, the convergence holds pathwise on compact time intervals along Brownian sample paths. The equilibrium mean production capacity $\widehat q$ and the equilibrium investment rate $\widehat \bfu$ are therefore the same objects obtained in the deterministic formulation.
\end{remark}
}}

{
\section{Numerical experiments}
\label{sec:num}

In this section we discuss some numerical simulations based on our results, to visualize the behaviour of the equilibrium average production capacity and of the optimal investment strategy at equilibrium, and at the same time highlighting the differences between the finite and infinite time horizon cases.

We consider the following values for the model parameters: $\beta = 2$, $\delta= 0.01$, $\rho = 0.03$. As stressed in the previous sections, the equilibrium average production capacity and the optimal investment strategy do not depend on volatility $\sigma$ appearing in SDE~\eqref{eq:SDE}. Nonetheless, we are going to discuss the effect of such parameter on the equilibrium production capacity.

\subsection{The finite time horizon case}
\label{sec:num:finite}
We consider three different time horizons, namely, $T = 3, 30, 300$. Thus, we compare the dynamics at equilibrium of the average production capacity $\widehat q$, given in~\eqref{eq:IDEint_finite}, and of the optimal investment strategy $\widehat \bfu$, which coincides with $z^{T,\widehat q}$ in~\eqref{eq:z_finite}, on a short, medium, and long time horizon.

We anticipate here that the longer horizon $T = 300$ is the same as the one used in the infinite-time horizon case, where it is used to truncate the time domain; this will allow us to compare the behaviour of $\widehat q$ and of $\widehat \bfu$ between the two cases. 
The specific choice of $T = 300$ will be motivated in Section~\ref{sec:num:infinite} below. 

First, for each of the three time horizons we solve numerically the integro-differential equation~\eqref{eq:zIDE}, whose solution $z$ is used to compute the equilibrium average production capacity $\widehat q$, via the transformation $\widehat q_s = \de^{-\delta s} z_s$, $s \in [0,T]$ (cf. {Lemma}~\ref{prop:IDE_transformation}); finally, we integrate numerically the expression appearing in~\eqref{eq:z_finite} to compute the equilibrium optimal investment strategy $\widehat \bfu$.

In Figure~\ref{fig:orizz_finito}, we plot the three functions computed above for each of the three time horizons, choosing as initial average production capacity $x = 10$; the graphs on each column show a comparison between the short, medium, and long time horizons.

\begin{figure}
    \centering
    \includegraphics[scale=0.37]{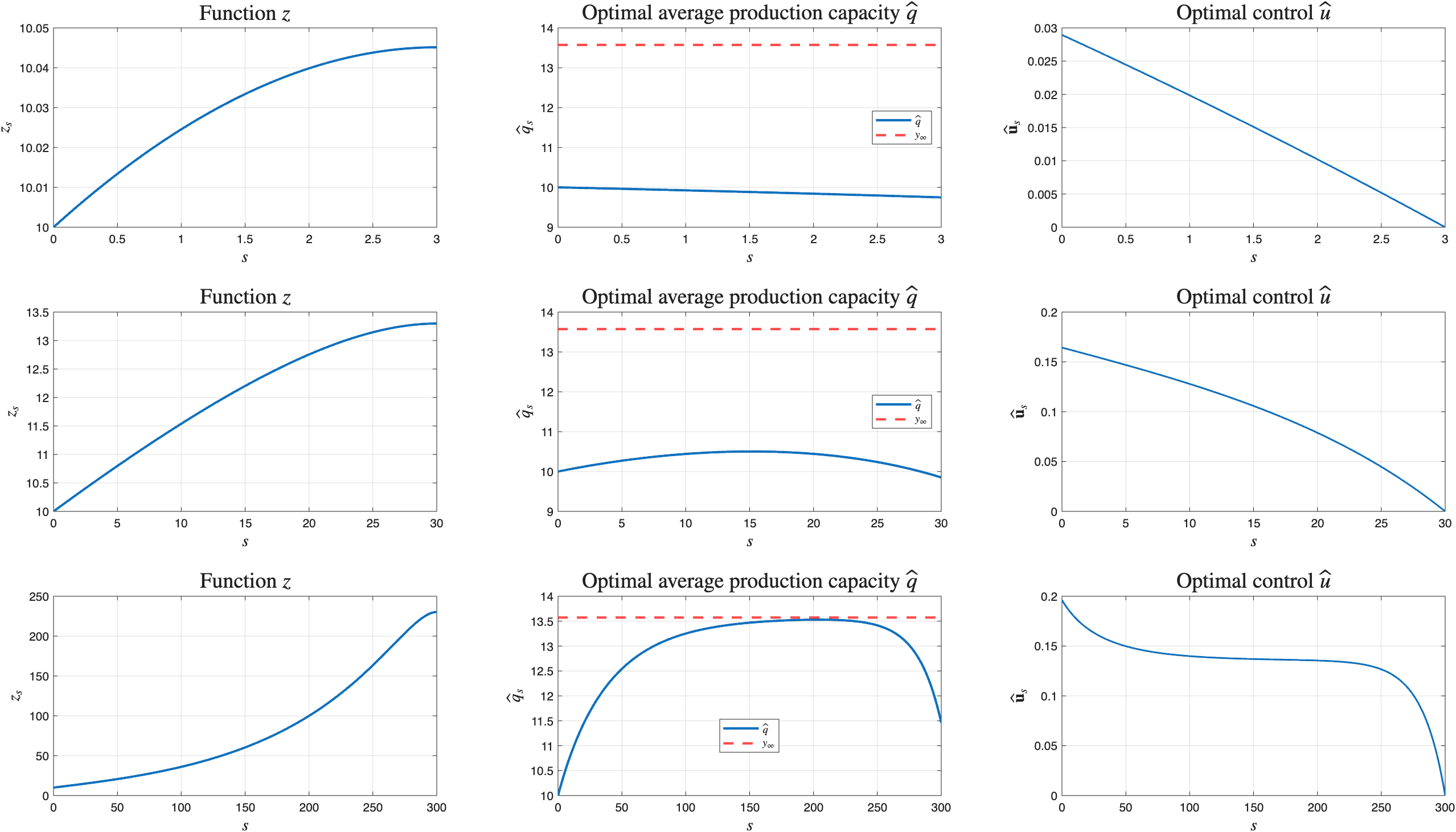}
    \caption{The solution $z$ to the integro-differential equation~\eqref{eq:zIDE}, the equilibrium average production capacity $\widehat q$, and the equilibrium optimal investment strategy $\widehat \bfu$, in the short, medium, and long time horizon cases. Initial condition $x = 10$.}
    \label{fig:orizz_finito}
\end{figure}

We observe that in the short time horizon case, the capital deterioration effect dominates the dynamics of $\widehat q$ and the investment level is relatively small. In the medium time horizon case, the investment effect predominates in the first half of the time interval, but then, as the investment effort decreases, the capital deterioration effect takes over. In the long time horizon case, the equilibrium average production capacity is pushed towards the infinite-time horizon steady state $y_\infty$ (cf. Theorem~\ref{th:fixed_point_infinite}) by the investment effort, which is almost kept constant for a large portion of the time interval.
However, we observe that, since the optimal control $\widehat \bfu$ is decreasing and verifies $\widehat u_T = 0$, the capital deterioration effect returns to dominate in the last part of the time interval also in the long horizon case.

We compare these graphs with those shown in Figure~\ref{fig:orizz_finito_equil}, where we compute the same three functions, starting at an initial production capacity $x = y_\infty$, i.e., at the steady state of the infinite time horizon case.

\begin{figure}
    \centering
    \includegraphics[scale=0.37]{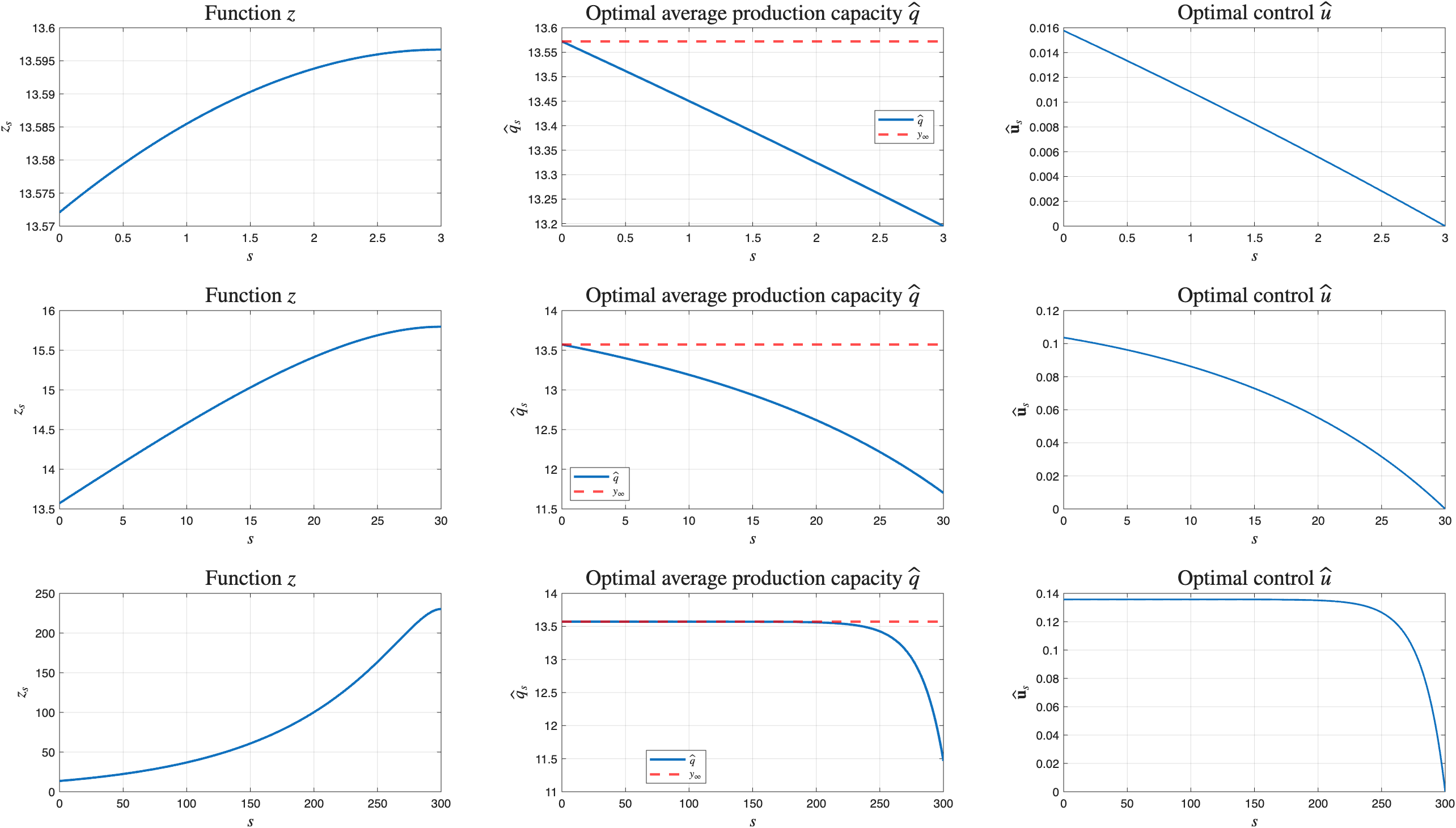}
    \caption{The solution $z$ to the integro-differential equation~\eqref{eq:zIDE}, the equilibrium average production capacity $\widehat q$, and the equilibrium optimal investment strategy $\widehat \bfu$, in the short, medium, and long time horizon cases. Initial condition $x = y_\infty \approx 13.5721$.}
    \label{fig:orizz_finito_equil}
\end{figure}

In this case, as the time horizon gets larger the capital deterioration effect is delayed by a higher investment level in the first part of the time interval. Nonetheless, the condition $\widehat u_T = 0$ implies that in the end the capital deterioration effect prevails.
{We observe a similar effect also in the case in which the initial production capacity $x$ is higher than $y_\infty$, as shown in Figure~\ref{fig:orizz_finito_sopra_yinfty}, where we set $x = 15$. Note that after a sufficiently large time, the average production capacity falls below the level $y_\infty$. We emphasize that in this case that the behaviour of the average production capacity and of the optimal control at equilibrium is different from the previous two cases, in which we observe that $\widehat q$ is concave and $\widehat \bfu$ is decreasing. In general, the properties of these two functions depend on the parameters of the model, but it is not clear how they determine the shape of $\widehat q$ and $\widehat \bfu$. This interesting question is, thus, left for future research.}

\begin{figure}
    \centering
    \includegraphics[scale=0.4]{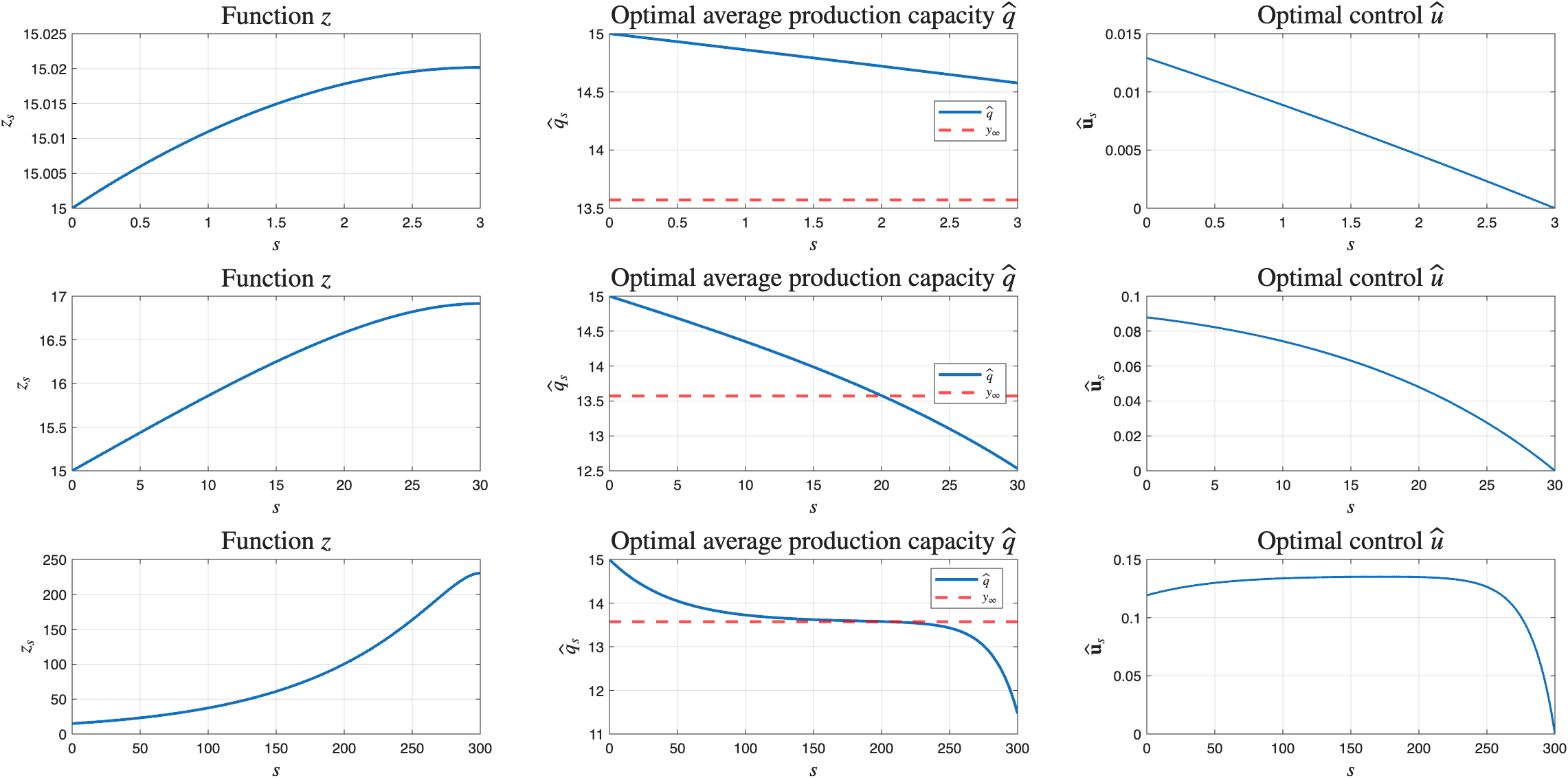}
    \caption{The solution $z$ to the integro-differential equation~\eqref{eq:zIDE}, the equilibrium average production capacity $\widehat q$, and the equilibrium optimal investment strategy $\widehat \bfu$, in the short, medium, and long time horizon cases. Initial condition $x = 15$.}
    \label{fig:orizz_finito_sopra_yinfty}
\end{figure}

{Figures~\ref{fig:orizz_finito}, \ref{fig:orizz_finito_equil}, and~\ref{fig:orizz_finito_sopra_yinfty}} clearly suggest that the finite time horizon $\widehat q$ and $\widehat \bfu$ converge to the corresponding functions in the infinite time horizon case.
However, a different numerical method needs to be used in the latter setting. Indeed, a simple truncation at time $T$ of the integro-differential equation~\eqref{eq:zIDE}, which is used to compute, in turn, $\widehat q$ and $\widehat \bfu$, would imply that $z'_T = 0$ (as it is possible to see in both Figures~\ref{fig:orizz_finito} and~\ref{fig:orizz_finito_equil}). Hence, even for a large value of $T$, we would see a decreasing pattern of $\widehat q$ in the long run, which is not consistent with the convergence to $y_\infty$ proved in Theorem~\ref{th:fixed_point_infinite}. We will discuss the details of such method in Section~\ref{sec:num:infinite} below.

We conclude our discussion of the finite time horizon case, by looking at {Figures~\ref{fig:traiett_orizz_finito}, \ref{fig:traiett_orizz_finito_equil}, and~\ref{fig:traiett_orizz_finito_sopra_yinfty}}, which provide sample trajectories of the equilibrium production capacity process $\widehat X \coloneqq X^{\xi, \widehat \bfu}$ (cf. Equation~\eqref{eq:SDE}), in each of the three time horizons given above and for $\sigma = 0.001, 0.01, 0.1$. In both figures we consider $\xi = \widehat X_ 0$ deterministic, thus coinciding with the initial average production capacity $x$; more precisely, in Figure~\ref{fig:traiett_orizz_finito} we consider $\xi = x = 10$; in Figure~\ref{fig:traiett_orizz_finito_equil}, we set $\xi = x = y_\infty$; {in Figure~\ref{fig:traiett_orizz_finito_sopra_yinfty}, we fix $\xi = x = 15$.}

\begin{figure}
    \centering
    \includegraphics[scale=0.37]{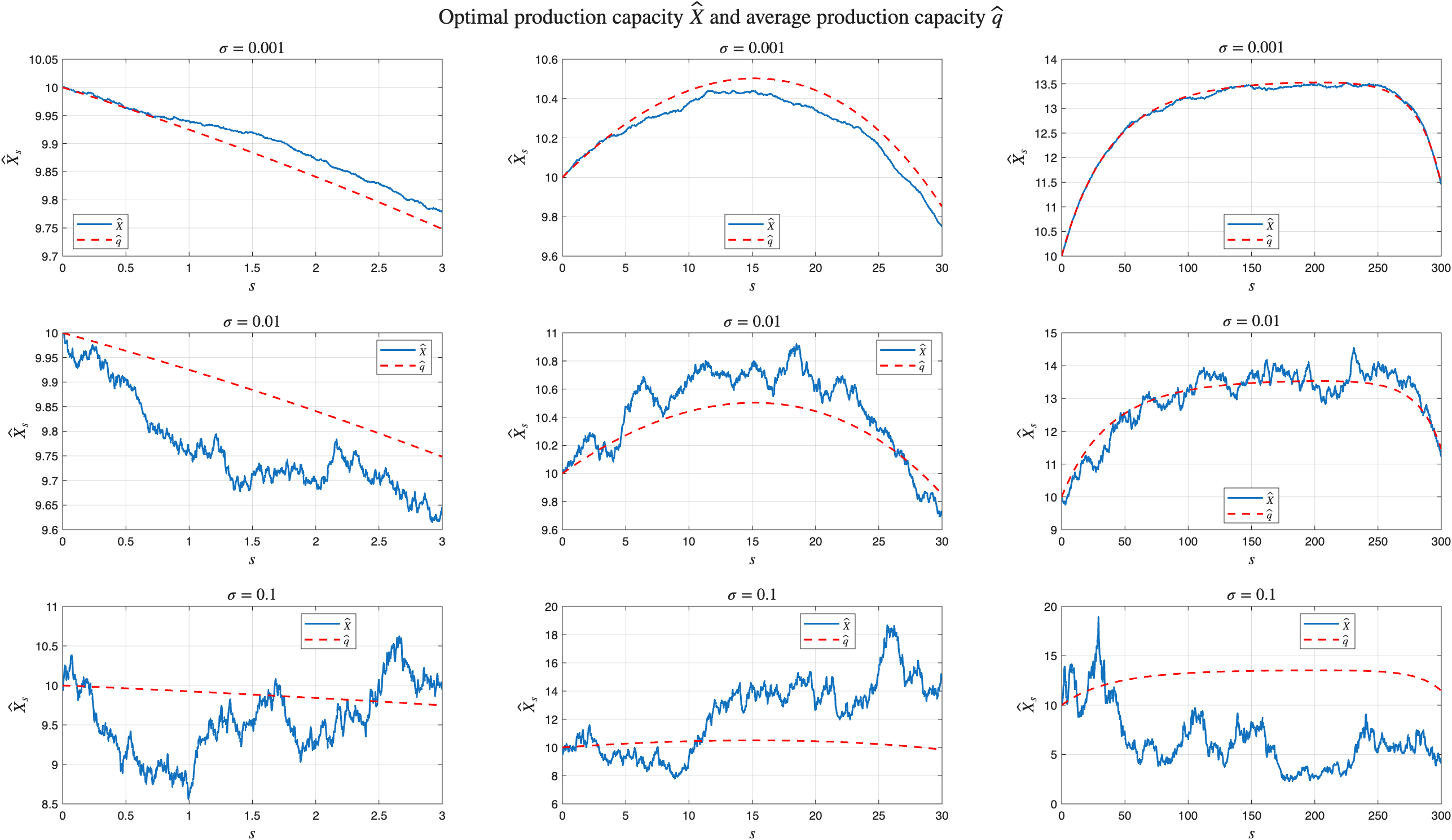}
    \caption{Trajectories of $\widehat X$ and the equilibrium average production capacity $\widehat q$, in the short, medium, and long time horizon cases, and for $\sigma = 0.001, 0.01, 0.1$. Initial condition $x = 10$.}
    \label{fig:traiett_orizz_finito}
\end{figure}

\begin{figure}
    \centering
    \includegraphics[scale=0.37]{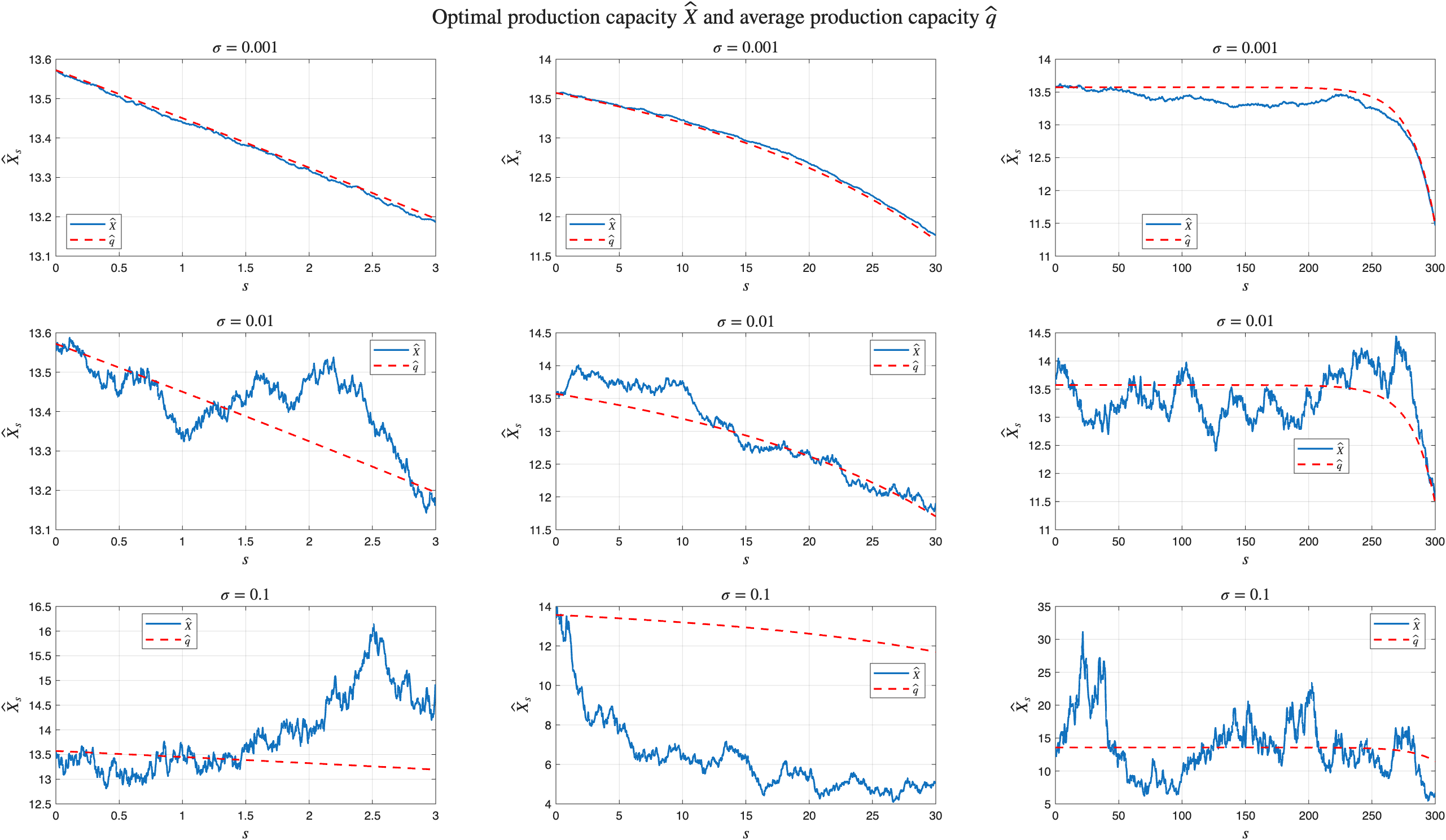}
    \caption{Trajectories of $\widehat X$ and the equilibrium average production capacity $\widehat q$, in the short, medium, and long time horizon cases, and for $\sigma = 0.001, 0.01, 0.1$. Initial condition $x = y_\infty \approx 13.5721$.}
    \label{fig:traiett_orizz_finito_equil}
\end{figure}

\begin{figure}
    \centering
    \includegraphics[scale=0.41]{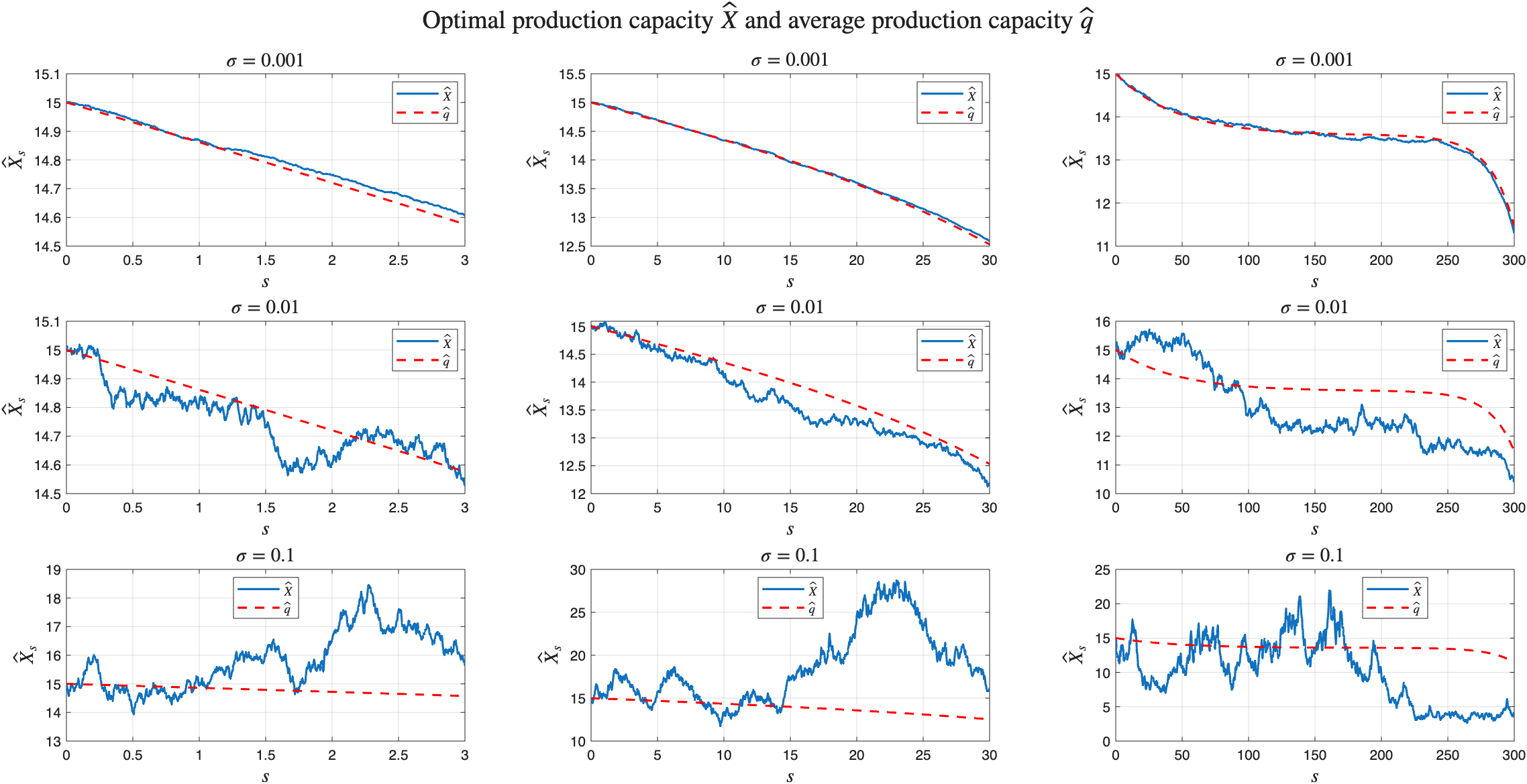}
    \caption{Trajectories of $\widehat X$ and the equilibrium average production capacity $\widehat q$, in the short, medium, and long time horizon cases, and for $\sigma = 0.001, 0.01, 0.1$. Initial condition $x = 15$.}
    \label{fig:traiett_orizz_finito_sopra_yinfty}
\end{figure}

As it is clear from our results, the role of $\sigma$ is just to measure the exposure of the production capacity level to the source of risk modeled by the Wiener process $B$ appearing in SDE~\eqref{eq:SDE}. Lower values of $\sigma$ imply that the equilibrium production capacity $\widehat X$ follows more closely its average, while higher values of $\sigma$ entail larger fluctuations, due to a higher sensitivity to risk.

\subsection{The infinite time horizon case}
\label{sec:num:infinite}
To compute numerically the optimal average production capacity $\widehat q$ and the optimal control $\widehat \bfu$ at equilibrium, we truncate the time domain $[0,+\infty)$ to the interval $[0,T]$, with $T = 300$.
The specific choice of $T = 300$ is motivated by a balance between accuracy and stability of the \emph{shooting method}, which is discussed below, and the need for a sufficiently long time horizon to show convergence of $\widehat q$ to the steady state $y_\infty$ given in Theorem~\ref{th:fixed_point_infinite}.
In addition, this value allows us to make a comparison with the longer finite time horizon case previously illustrated.

As discussed in Section~\ref{sec:num:finite}, we do not rely on a simple truncation to $T$ of the integro-differential equation~\eqref{eq:zIDE}, as this would artificially impose a decreasing behaviour of $\widehat q$, as time approaches $T$. This would be inconsistent with the convergence of the average production capacity to $y_\infty$, as stated in Theorem~\ref{th:fixed_point_infinite}.

We use, instead, the fact that any solution $z$ to the integro-differential equation~\eqref{eq:zIDE} also solves the second-order ODE~\eqref{eq:zIDE_infinitebis}, to which we can associate the initial value problem (IVP)
\begin{equation*}
\begin{dcases}
z''_s = (\rho+2\delta)z'_{s}- \de^{\delta (1+\beta)s}z_{s}^{-\beta}, &s \geq 0, \\
z_0 = x \, , \\
z'_0 =\zeta > 0 \, ,
\end{dcases}
\end{equation*}
where $x$ is the given initial average production capacity.
From the proof of Theorem~\ref{th:fixed_point_infinite}, we know that there exists a unique value of $\zeta > 0$ such that the IVP above has a unique solution in the class $\cC_{x,\infty}$ defined in~\eqref{eq:zIDE_solution_set_infinite}, which is also the unique solution to~\eqref{eq:zIDE}.

To approximate $z$ numerically we employ the so-called \emph{shooting method}, which, starting from an initial guess and using a sequence of successive approximations (\textit{via} the bisection method), computes the value of $\zeta$. This allows us to retain the correct properties of the solution $z$ to IDE~\eqref{eq:IDE} and, in particular, the fact that it grows exponentially at infinity at rate $\delta$ (cf. Figure~\ref{fig:orizz_infinito} below), a fact which is implied by the convergence of $\widehat q$ to $y_\infty$.

To compute the optimal control $\widehat \bfu$, we observe that truncating the integral in~\eqref{eq:z_infinite} at $T$, would force $\widehat u_T = 0$, which would yield a poor approximation as time approaches the truncation point. For this reason, we actually compute numerically the solution to IDE~\eqref{eq:IDE} -- and, thus, the average production capacity $\widehat q$ -- on the larger time interval $[0, \widetilde T]$, with $\widetilde T = 400$, and then we use the intended time interval $[0,T] = [0, 300]$ to compute $\widehat \bfu$ and to draw the graphs.

In Figure~\ref{fig:orizz_infinito} we compare the three functions $z$, $\widehat q$, and $\widehat \bfu$, choosing as initial conditions either $x = 10$, or $x = y_\infty + 0.01$, i.e., slightly above the steady state $y_\infty$.

\begin{figure}
    \centering
    \includegraphics[scale=0.37]{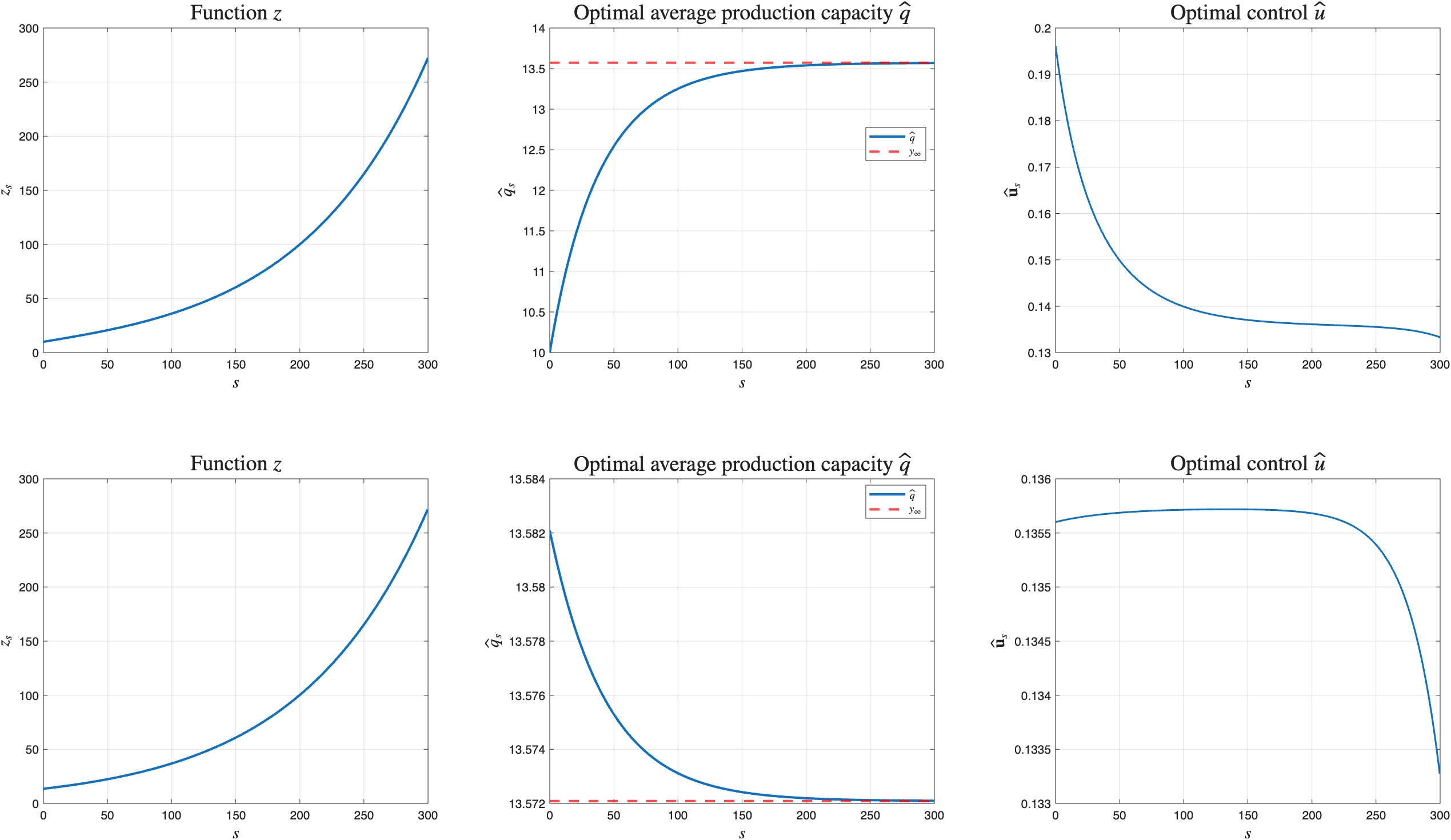}
    \caption{The solution $z$ to the integro-differential equation~\eqref{eq:zIDE}, the equilibrium average production capacity $\widehat q$, and the equilibrium optimal investment strategy $\widehat \bfu$. Initial conditions: $x = 10$ in the first row; $x = y_\infty + 0.01 \approx 13.5821$ in the second row.}
    \label{fig:orizz_infinito}
\end{figure}

As expected, we obtain convergence of the equilibrium average production capacity to the steady state $y_\infty$. The optimal control reaches an almost constant value in a large part of the time interval, before starting to decrease to zero, as implied by Equation~\eqref{eq:z_infinite}.

Also in the infinite time horizon case, we conclude our discussion with a simulation of sample trajectories of the equilibrium production capacity process $\widehat X \coloneqq X^{\xi, \widehat \bfu}$, whose dynamics are given in~\eqref{eq:SDE}, for $\sigma = 0.001, 0.01, 0.1$. In Figure~\ref{fig:traiett_orizz_infinito} we show two cases, in which we consider $\xi = \widehat X_ 0$ deterministic, thus coinciding with the initial average production capacity $x$; more precisely, $\xi = x = 10$ and $\xi = x = y_\infty + 0.01$.

\begin{figure}
    \centering
    \includegraphics[scale=0.37]{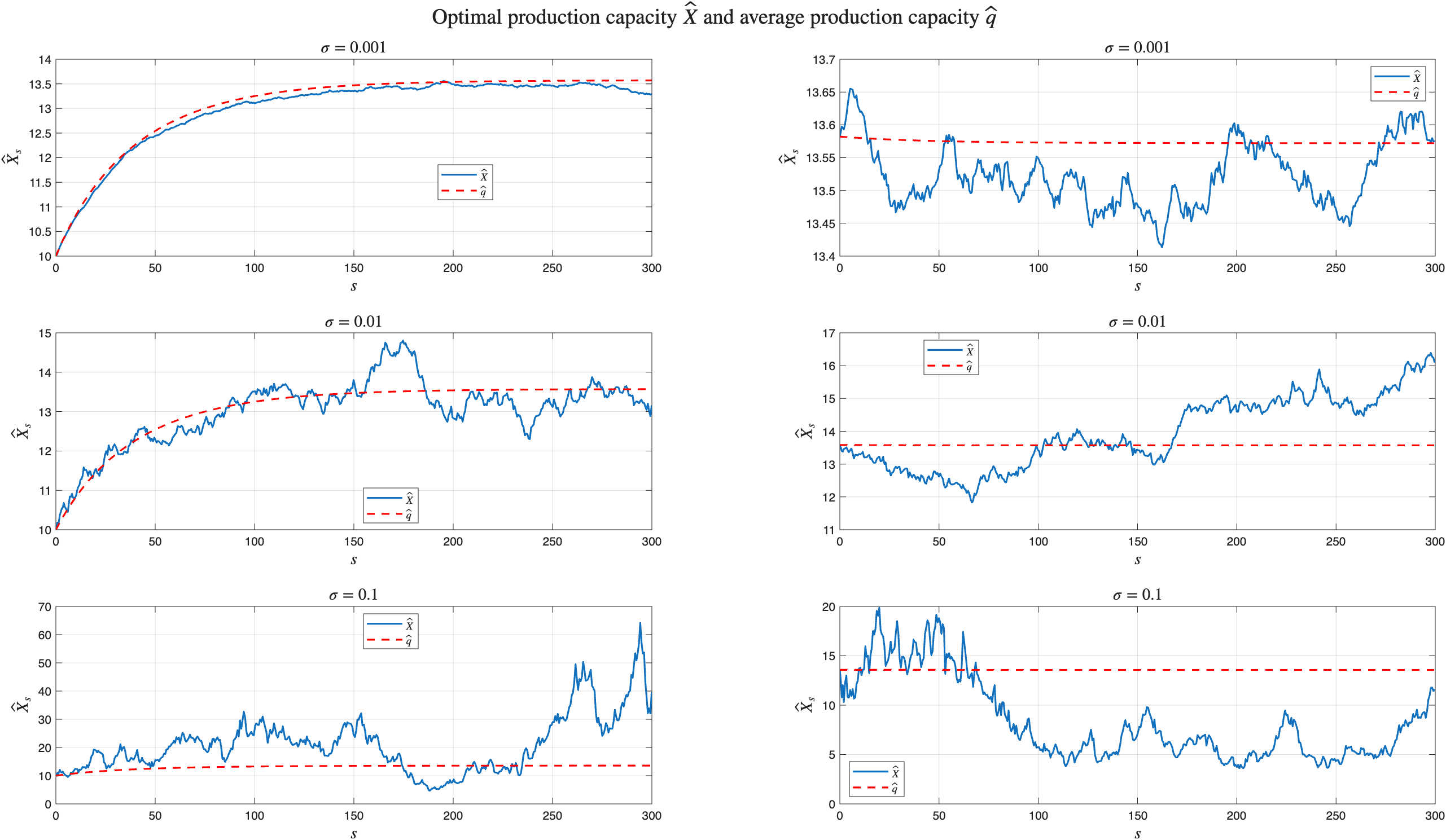}
    \caption{Trajectories of $\widehat X$ and the equilibrium average production capacity $\widehat q$, in the infinite time horizon case, and for $\sigma = 0.001, 0.01, 0.1$. Initial conditions: $x = 10$ in the left column; $x = y_\infty + 0.01 \approx 13.5821$ in the right column.}
    \label{fig:traiett_orizz_infinito}
\end{figure}

Similarly to the finite time horizon case, we observe that $\sigma$ simply accounts for the exposure of the production capacity level to the risk factor modeled by the Brownian motion $B$ in SDE~\eqref{eq:SDE}. Note that in the case $x = y_\infty + 0.01$, $\sigma = 0.001$ (top plot on the right), the level of fluctuation of $\widehat X$ with respect to the average $\widehat q$ is only apparently high, as one can see from the scale of the vertical axis. 
}

\bibliographystyle{plainnat}
\bibliography{Bibliography}
\bigskip
\section*{Statements and Declarations}
\subsection*{Funding}
Giorgio Ferrari gratefully acknowledges financial support from \emph{Deutsche Forschungsgemeinschaft} (DFG, German Research Foundation)– Project-ID 317210226– SFB 1283. This work started during the visit of Alessandro Calvia, Salvatore Federico and Fausto Gozzi at the Center for Mathematical Economics (IMW) at Bielefeld University. These authors thank IMW and the SFB 1283 for the support and hospitality. 

Alessandro Calvia, Salvatore Federico and Fausto Gozzi
are supported by the Italian Ministry of University and Research (MUR),
in the framework of PRIN project 2017FKHBA8 001 (The Time-Space Evolution of Economic Activities: Mathematical
Models and Empirical Applications).

\subsection*{Acknowledgments} The authors wish to thank the anonymous referees and the
associate editor for their kind comments and suggestions that helped to improve the
paper.

%Alessandro Calvia and Salvatore Federico are members of the Gruppo Nazionale per l’Analisi Matematica, la Probabilità e le loro Applicazioni (GNAMPA) of the Istituto Nazionale di Alta Matematica "Francesco Severi" (INdAM).

\subsection*{Competing Interests}
The authors have no relevant financial or non-financial interests to disclose.
\end{document}